%%%%%%%%%%%%%%%%%%%%%%%%%%%%%%%%%%%%%
\documentclass[reqno,11pt,centertags, draft]{amsart}
\usepackage{hyperref}
\usepackage{amsmath,amsthm,amscd,amssymb,amsfonts,latexsym,upref,stmaryrd,
	enumerate,color,verbatim,mathrsfs, mathtools}
\usepackage{tikz}
\usepackage{cite}
\usetikzlibrary{trees}
\date{\today}
%%%%%%%%%%%%%%%%%%%%%%%%%%%%%%%%%%%%%
%HERE you TURN ON/OFF the tags for eqs., refs., etc.%
%\usepackage[nomsgs,ignoreunlbld]{refcheck}
%\usepackage{showkeys}
%\usepackage{hyperref}
%\usepackage{refcheck}

\mathtoolsset{showonlyrefs}

\newcommand*{\mailto}[1]{\href{mailto:#1}{\nolinkurl{#1}}}

%\documentclass[draft, reqno]{amsart}
%\usepackage{amssymb}
%\usepackage{amscd}
%\usepackage{amsmath}
%\usepackage{amsthm}
%\usepackage{verbatim}

%%%%% BOLD AND CALLIGRAPHIC LETTERS %%%%%%

\newcommand{\R}{{\bbR}}

\newcommand{\Z}{{\bbZ}}
\newcommand{\C}{{\bbC}}

\newcommand{\bbA}{{\mathbb{A}}}

\newcommand{\bbC}{{\mathbb{C}}}

\newcommand{\bbJ}{{\mathbb{J}}}
\newcommand{\bbH}{{\mathbb{H}}}

\newcommand{\bbN}{{\mathbb{N}}}
\newcommand{\bbP}{{\mathbb{P}}}
\newcommand{\bbQ}{{\mathbb{Q}}}
\newcommand{\bbR}{{\mathbb{R}}}

\newcommand{\bbZ}{{\mathbb{Z}}}

\newcommand{\cA}{{\mathcal A}}
\newcommand{\cB}{{\mathcal B}}

\newcommand{\cD}{{\mathcal D}}
\newcommand{\cE}{{\mathcal E}}

\newcommand{\cH}{{\mathcal H}}

\newcommand{\cK}{{\mathcal K}}
\newcommand{\cL}{{\mathcal L}}

\newcommand{\cN}{{\mathcal N}}
\newcommand{\cO}{{\mathcal O}}

\newcommand{\cR}{{\mathcal R}}
\newcommand{\cS}{{\mathcal S}}

\newcommand{\cU}{{\mathcal U}}
\newcommand{\cV}{{\mathcal V}}

\newcommand{\gD}{{\mathfrak{D}}}

\newcommand{\gh}{{\mathfrak{h}}}

\newcommand{\bfi}{{\bf i}}

%%%%%%%%%%%%% GREEK LETTERS%%%%%%%%%%%

\renewcommand{\b}{\beta}

\newcommand{\e}{\varepsilon}

\renewcommand{\l}{\lambda}

%\newcommand{\f}{\varphi}

%%%%%%%%%% OPERATOR NAMES AND SUCH %%%%%%%%%%%%

\DeclareMathOperator{\dist}{dist}
\DeclareMathOperator{\supp}{supp}

\DeclareMathOperator{\ran}{ran}
\DeclareMathOperator{\dom}{dom}
\DeclareMathOperator{\gen}{gen}

\DeclareMathOperator{\tr}{tr}

\newcommand{\SL}{\mathrm{SL}}

\renewcommand{\Im}{\text{\rm Im}}

\newcommand{\no}{\notag}
\newcommand{\lb}{\label}

\newcommand{\wti}{\widetilde}

\newcommand{\hatt}{\widehat}
\newcommand{\bi}{\bibitem}

 % normal inner product

\renewcommand{\ge}{\geqslant}
\renewcommand{\le}{\leqslant}

\let\geq\geqslant
\let\leq\leqslant

%%%%%%% MISCELLANEOUS NOTATIONS %%%%%%%%%%%%%

%\renewcommand{\qed}{\vrule height7pt width5pt depth0pt}

%%%%%%%%%%%%%%%%%% COLORS %%%%%%%%%%%%%%%%%%%%

\definecolor{purple}{rgb}{.5,0,1}

\newcommand{\set}[1]{{\left\{ {#1} \right\}}}

%%%%%%%%%%%%%%%% EQUATIONS %%%%%%%%%%%%%%%%%%%%%

\makeatletter
\def\theequation{\@arabic\c@equation}
%\newcommand{\erpm}[1]{{$(\ref{#1}\pm)$}}
%\newcommand{\erp}[1]{{$(\ref{#1}+)$}}
%\newcommand{\erm}[1]{{$(\ref{#1}-)$}}
%\renewcommand{\theequation}{\thesection.\arabic{equation}}
%\renewcommand{\P}{{\mathcal P}}

%%%%%%%%%%%% THEOREM ENVIRONMENTS %%%%%%%%%%%%

\allowdisplaybreaks
\numberwithin{equation}{section}

\newtheorem{theorem}{Theorem}[section]
\newtheorem{proposition}[theorem]{Proposition}
\newtheorem{lemma}[theorem]{Lemma}

\newtheorem{hypothesis}[theorem]{Hypothesis}

\newtheorem{step}{Step}
\theoremstyle{remark}
\newtheorem{remark}[theorem]{Remark}

%%%%%%%%%%%%%% END OF DEFINITIONS %%%%%%%%%%
\begin{document}
	
	\numberwithin{equation}{section}
	\allowdisplaybreaks
	
	\title[Localization for Radial Tree Graphs]{Localization for Anderson Models on \\ Metric and Discrete Tree Graphs}

	\author[D.\ Damanik]{David Damanik}
	\address{ Department of Mathematics, Rice University, Houston, TX 77005, USA}
	\email{\mailto{damanik@rice.edu}}
	\thanks{D.D.\ was supported in part by NSF grant DMS--1700131. }
		
	\author[J. Fillman]{Jake Fillman}
	\address{ Department of Mathematics, Texas State University,  San Marcos,\ \   TX 78666, USA}
	\email{\mailto{fillman@txstate.edu}}
	\thanks{J.F.\ was supported in part by an AMS-Simons travel grant, 2016-2018}
	
	\author[S.\ Sukhtaiev]{Selim Sukhtaiev}
	\address{ Department of Mathematics, Rice University, Houston, TX 77005, USA}
	\email{\mailto{sukhtaiev@rice.edu}}
	\thanks {S.S.\ was supported in part by an AMS-Simons travel grant, 2017-2019}
	
	%\dedicatory{}

	\date{\today}
	%\subjclass[]{}
	\keywords{Anderson localization, Laplace operator, tree graphs}

	%%%%%%%%%%%%
	%%%%%%%%%%%%
	\begin{abstract}
	We establish spectral and dynamical localization for several Anderson models on metric and discrete radial trees.  The localization results are obtained on compact intervals contained in the complement of discrete sets of exceptional energies. All results are proved under the minimal hypothesis on the type of disorder: the random variables generating the trees assume at least two distinct values. This level of generality, in particular, allows us to treat radial trees with disordered geometry as well as Schr\"odinger operators with Bernoulli-type singular potentials. Our methods are based on an interplay between graph-theoretical properties of radial trees and spectral analysis of the associated random differential and difference operators on the half-line.
	\end{abstract}
	%%%%%%%%%%%%
	%%%%%%%%%%%%
	
	\maketitle
	
	%\vspace*{-3mm}
	{\scriptsize{\tableofcontents}}
	%\normalsize

	%%%%%%%%%%%%%%%%%%%%%%%%%%%%%%
	%%%%%%%%%%%%%%%%%%%%%%%%%%%%%%
	
	\section{Introduction}
	\subsection{Description of Main Results}
	The central theme of this paper is Anderson localization for random models on tree graphs. In the first part of this work we establish spectral and dynamical localization for {\it continuum}  Laplace operators subject to random Kirchhoff vertex conditions on radial trees with disordered geometry. Specifically, we consider metric trees with random branching numbers and random edge lengths.  The second part of this paper addresses analogous questions for random second order difference operators on {\it discrete} radial trees with random branching numbers.  At the outset, we emphasize that our results are all proved under the minimal possible hypotheses. Namely, we assume that the random variables used to generate the trees take at least two distinct values. We will formulate this assumption more precisely as Hypothesis~\ref{hyp2}. In particular, we can handle the case of Bernoulli distributions, which is generally considered to be the most challenging setting.

To begin, let us describe the models. Let $\Gamma$ be a  metric tree with vertices $\mathcal V$, edges $\mathcal E$, and uniformly bounded edge lengths $\{\ell_e >0 : e\in \mathcal E\}$. We further assume that there is a unique vertex $o \in \cV$ with degree 1, which we call the \emph{root} of $\Gamma$; see, for example, Figure~\ref{pic}.  For each vertex $v$, $\gen(v)$ (the generation of $v$) is the combinatorial distance from $v$ to the root. One defines $\gen(e)$ for $e \in \mathcal E$ similarly. We consider the Laplace operator $\bbH:=-\frac{d^2}{dx^2}$ acting in $L^2(\Gamma)$. In order to ensure self-adjointness of $\bbH$, we impose a Dirichlet condition at $o$, that is,
\begin{equation}\label{eq:dirichletBCgraph}
f(o) = 0,
\end{equation}
as well as Kirchhoff vertex conditions  given by
	\begin{equation}
	\begin{cases}
	\ f\text{\ is continuous at \ }v, & v \in \mathcal{V}\\
	\sum\limits_{e \in \mathcal E : v \in e}\partial_{\nu}^{e}f(v)=q{(v)} f(v) & v \in \mathcal{V} \setminus\set{o},
	\end{cases}
	\lb{con3}
	\end{equation}
where $q:\cV\rightarrow\bbR$ is a real-valued function, and $\partial_{\nu}^{e}$ denotes the inward-pointed derivative along the edge $e \in \mathcal E$. The assumption that $\deg(o)=1$ is purely for convenience. If the root has degree 2 or higher, the Dirichlet condition \eqref{eq:dirichletBCgraph} implies that the operators we study decompose into a direct sum of operators covered by the $\deg(o)=1$ case. In the simplified case $\Gamma=\bbR_+$ the vertex conditions \eqref{con3} provide a rigorous description of the self-adjoint realization of  Schr\"odinger operators with zero-range potentials and coupling constants $q(v)$ (cf., e.g., \cite[Section III.2.1]{AGHH}, \cite[Section 1.4.1]{BK}).

 We denote the branching number of each vertex by $b(v) = \deg(v) - 1$ for $v \in \mathcal{V} \setminus \set{o}$. In this work, we assume that all quantities are \emph{radial}. That is to say, we assume that $q(v)$ and $b(v)$ depend only on $\gen(v)$ and $\ell_e$ depends only on $\gen(e)$. The three continuum random models treated in this paper are: the random branching model (RBM),  the random lengths model (RLM), and the random Kirchhoff model (RKM). In these models, the branching numbers, the Kirchhoff coupling constants, and the edge lengths are independent identically distributed Bernoulli-type random variables which depend only on the distance to the root $o$; the precise description of these models is provided in Section \ref{sec3.1}. In fact, our approach can allow all three parameters to vary simultaneously; we simply single out RBM, RLM, and RKM as prominent applications of our method. Thus, these models are parameterized by a choice of a probability measure $\widetilde \mu$ supported on a set of the form $\mathcal{A} = \set{b^-,b^-+1,\ldots, b^+} \times [\ell^-,\ell^+] \times [q^-,q^+]$, which gives the probability distribution for the branching numbers, the edge lengths, and the Kirchhoff potential at each generation. To be a little more specific, the probability space is $\Omega = \cA^{\bbN}$ with measure $\mu = \wti\mu^{\bbN}$; then, each $\omega \in \Omega$ produces a tree model with parameters dictated by
 \[
 b(v)
 =
 \omega_1(\gen(v)),
 \quad
 \ell_{e}
 =
 \omega_2(\gen(e)),
 \quad
 q(v)
 =
 \omega_3(\gen(v)),
 \quad
 v \in \cV,\ e \in \cE.
 \]
		
			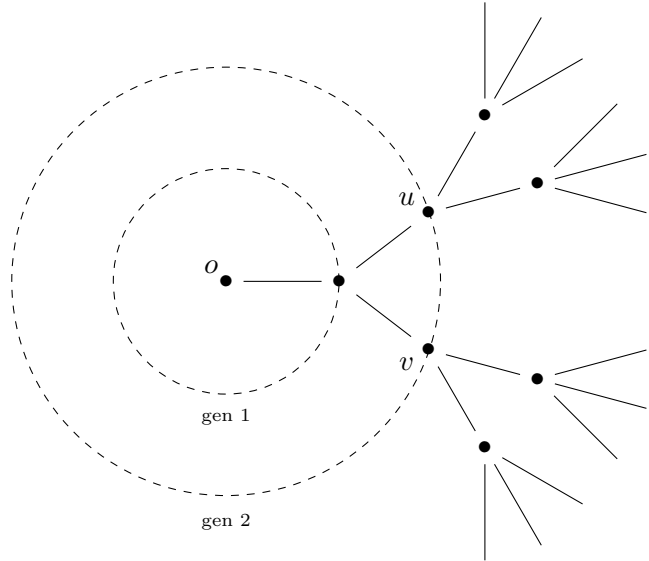
\begin{figure}
			\tikzstyle{level 1}=[sibling angle=120]
			\tikzstyle{level 2}=[sibling angle=75]
			\tikzstyle{level 3}=[sibling angle=45]
			\tikzstyle{level 4}=[sibling angle=30]
			\begin{tikzpicture}[grow cyclic, level distance=15mm]
			%\put(20,87){{\tiny \text{eigenvalues}}}

			\draw[thin, dashed] (0,0) circle (1.5cm);
			\draw[thin, dashed] (0,0) circle (2.85cm);
			\node {{$\bullet$}}
			child {node {$\bullet$}
				child {node {$\bullet$}
					%				child {node {$\bullet$}child childchild}
					child {node {$\bullet$}child childchild}
					child {node {$\bullet$}child childchild}}
				child{node {$\bullet$}
					%				child {node {$\bullet$}child childchild}
					child {node {$\bullet$}child childchild}
					child {node {$\bullet$}child childchild}}}
			;
			\draw (-0.2,0.2) node {{$o$}};
			%	\draw (1.3, 0.2) node {{$w$}};
			\draw (2.4, 1.1) node {{$u$}};	\draw (2.4, -1.1) node {{$v$}};
			\draw (0,-1.8) node {\tiny{gen 1}};
			\draw (0,-3.2) node {\tiny{gen 2}};
			\end{tikzpicture}
			\caption{\ $b_0=1, b_1=2, b_2=2, b_3=3$}\lb{pic}
		\end{figure}
		
	Our approach is based on the orthogonal decomposition of $L^2(\Gamma)$ into a countable collection of reducing subspaces of the operator $\bbH$; cf.\ \cite{NS}, \cite{NS2} (see also \cite{Car}). The restriction of $\bbH$ on each subspace is unitarily equivalent to a shifted version of the model half-line operator $H:=-\frac{d^2}{dx^2}$ acting in $L^2(\bbR_+)$, subject to the Dirichlet condition at $0$ and self-adjoint vertex conditions of the form
	\begin{equation}
	\begin{cases}
	\sqrt{b_j} f(t_{j}^-)=f(t_{j}^+), & j \in \bbN\\
	f'(t_{j}^-)+q_jf(t_j^-)={\sqrt{b_j}}{f'(t_{j}^+)} & j \in \bbN,
	\end{cases}\lb{vcon}	
	\end{equation}
	where $t_j$ denotes the distance from the root to vertices of generation $j\in\bbN$. Similarly $b_j$ denotes the branching number and $q_j$ is the Kirchhoff coupling constant at generation $j$.
	
	The natural occurrence of Bernoulli models in this paper is due to random branching; in particular, the branching at each generation may only take integral values, so any randomness in the branching parameter must necessarily be discrete. Broadly speaking, the behavior of random models (at least in one spatial dimension) tends to be monotone in the randomness. In particular, increasing the randomness of the model tends to make the spectrum more singular. Thus, proving localization statements in the situation in which the single-site distribution is supported on two points (the Bernoulli case) is the most challenging task.
	
To prove localization for the 1D half-line operator $H_{\omega}$, we adapt the approach of \cite{BuDaFi}, which itself fits into the general framework of spectral analysis via transfer matrix techniques, see, e.g., \cite{DSS, PF} for illuminating discussions. Recall that a \emph{generalized eigenfunction} is an solution $\psi$ of the eigenvalue equation $H_\omega \psi = E\psi$ that enjoys a linear upper bound; in this case, $E$ is known as the corresponding \emph{generalized eigenvalue}.

 For the proof, we first employ F\"urstenberg's Theorem to ensure positivity of the Lyapunov exponent away from a discrete set $\mathfrak D$ (Theorem~\ref{thm34}), and then show that almost surely all generalized eigenfunctions exhibit Lyapunov behavior in every compact interval $I\subset\bbR\setminus \mathfrak D$, (Theorem~\ref{main1}).   This shows that the generalized eigenfunctions decay exponentially, which establishes spectral localization. At that point, the established exponential decay of generalized eigenfunctions is combined with the proof of spectral localization to bootstrap sharper bounds for the eigenfunctions in terms of their centers of localization, cf.\ \eqref{426}. The latter are crucial for showing dynamical localization. We summarize this discussion by formulating the first main result of this work.
	\begin{theorem}\lb{thm48}
	Suppose $\supp\widetilde\mu$ contains at least two points.  Then there exists a discrete set $\mathfrak D\subset \bbR$ such that for every compact interval $I\subseteq \bbR\setminus \mathfrak D$ and  every $p>0$, there exists $\wti \Omega\subset\Omega$ with $\mu(\wti\Omega)=1$ such that
	\begin{equation}\lb{3.102}
	\sup\limits_{t>0} \left\||X|^p\chi_I(H_{\omega}) e^{-itH_{\omega}}\psi\right\|_{L^2(\bbR_+)}<\infty,\ \omega\in\wti\Omega,
	\end{equation}
	whenever $\psi\in L^2(\bbR_+)$ and
	\begin{equation}\lb{1.4nn}
	\psi(x)\underset{x\rightarrow\infty}{=}\cO (e^{-\log^{C} x}),
	\end{equation}
	for some universal constant $C>0$.
	\end{theorem}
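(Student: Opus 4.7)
The plan is to prove dynamical localization directly for the half-line operator $H_\omega$ on $L^2(\bbR_+)$ with vertex conditions \eqref{vcon}, via the standard three-phase scheme: (i) positivity of the Lyapunov exponent off a discrete set of energies, (ii) uniform large-deviation estimates on each compact interval $I\subseteq\bbR\setminus\mathfrak D$ yielding spectral localization, and (iii) a bootstrap to SULE bounds and thence to dynamical localization. Steps (i)--(ii) are already announced in the excerpt as Theorem~\ref{thm34} and Theorem~\ref{main1}; the work in this plan is to assemble them correctly and then pass to dynamical localization.

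For step (i), I would encode the matching conditions \eqref{vcon} as transfer matrices $T_j(E,\omega)$ that propagate a solution of $H_\omega \psi = E\psi$ from generation $j-1$ to generation $j$, incorporating both the free propagation along an edge of length $\ell_j$ and the jump and matching conditions at $t_j$ determined by $b_j$ and $q_j$. Since the triples $(b_j,\ell_j,q_j)$ are i.i.d.\ under $\mu$, the ordered products $T_n(E,\omega)\cdots T_1(E,\omega)$ form a stationary ergodic cocycle, so an application of F\"urstenberg's theorem produces the discrete exceptional set $\mathfrak D$ off of which the Lyapunov exponent $\gamma(E)$ is strictly positive. Hypothesis~\ref{hyp2} (two-point support) is precisely what guarantees that the closed subgroup generated by $\supp T_1(E,\cdot)$ fails non-compactness or strong irreducibility at only a discrete set of energies; this is Theorem~\ref{thm34}.

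For step (ii), fix a compact interval $I \subseteq \bbR \setminus \mathfrak D$. Following the approach of \cite{BuDaFi} adapted to the jump-matrix setting of \eqref{vcon}, I would combine a coarse-scale Wegner-type estimate with a multiscale elimination-of-resonances argument to upgrade finite-volume Lyapunov positivity to a uniform-in-$E$ large deviation bound of the form
\[
\mu\left\{\omega : \left|\tfrac{1}{n}\log\|T_n(E,\omega)\cdots T_1(E,\omega)\| - \gamma(E)\right| > \delta\right\} \leq e^{-cn^\alpha}, \quad E \in I.
\]
Fed into the Shnol-type criterion of Theorem~\ref{main1}, this forces every generalized eigenfunction with eigenvalue in $I$ to decay exponentially almost surely, yielding spectral localization: $H_\omega$ has pure point spectrum on $I$ with exponentially decaying eigenfunctions.

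For step (iii), I would bootstrap the uniform exponential decay into a SULE-type estimate of the form \eqref{426}: almost surely, each eigenvalue $E_n \in I$ admits a center of localization $x_n$ and its normalized eigenfunction $\psi_n$ obeys $|\psi_n(x)| \le C_\omega e^{\log^C |x_n|} e^{-\gamma|x - x_n|/2}$, where $C>0$ is precisely the universal exponent appearing in \eqref{1.4nn}. Expanding $\chi_I(H_\omega)e^{-itH_\omega}\psi$ in the orthonormal eigenbasis $\{\psi_n\}$ and combining the SULE estimate with the decay hypothesis \eqref{1.4nn} on $\psi$ then controls $\||X|^p \chi_I(H_\omega)e^{-itH_\omega}\psi\|_{L^2(\bbR_+)}$ uniformly in $t$, yielding \eqref{3.102}. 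The principal obstacle lies in step (ii): the Bernoulli-type regime permitted by Hypothesis~\ref{hyp2} precludes any Wegner estimate based on absolute continuity of the single-site distribution, so the uniform large-deviation bound must be obtained via the more delicate Bourgain--Kenig style multiscale analysis; this is further complicated by the mixed discrete-continuous character of the disorder $(b_j,\ell_j,q_j)$ with $b_j \in \bbN$, which requires a careful case analysis to rule out reducible or compact subgroups at each induction step.
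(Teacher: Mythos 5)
Your three-phase outline is the right skeleton and steps (i) and (ii) match the paper: F\"urstenberg's theorem away from a discrete set (Theorem~\ref{thm34}), then LDT plus elimination of double resonances and the Avalanche Principle to force Lyapunov behavior of generalized eigenvalues (Theorems~\ref{dr} and \ref{main1}), exactly in the spirit of \cite{BuDaFi}. (One small mischaracterization: the mechanism in \cite{BuDaFi} is not a Bourgain--Kenig-style multiscale analysis; it is a transfer-matrix large-deviation argument that sidesteps Wegner estimates entirely, which is the whole point for the Bernoulli case.)

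The real gap is in step (iii). You write that expanding $\chi_I(H_\omega)e^{-itH_\omega}\psi$ in the eigenbasis and ``combining the SULE estimate with the decay hypothesis \eqref{1.4nn}'' controls the norm. That is not yet a proof: the SULE constant $e^{C_\delta \log^C(\zeta_n+1)}$ in \eqref{426} grows sub-exponentially with the center of localization $\zeta_n$, so the series over eigenfunctions does not obviously converge. You need, and have not supplied, an upper bound on the \emph{number} of localization centers in a box, namely $\cN(L) := \#\{n: \zeta_n \le L\} \lesssim_\omega L$. Without such a counting estimate, there could be too many eigenfunctions with centers near a given $L$, and the sum in \eqref{4.88n}--\eqref{4.91n} could diverge no matter how fast $\psi$ decays. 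The paper spends the first half of the proof of Theorem~\ref{thm48} establishing precisely this bound (inequalities \eqref{475new}, \eqref{475}, \eqref{486}), via a trace estimate: one shows $\cN(L) \lesssim \tr\bigl(\chi_{3L} R^2(H_\omega)\chi_{3L}\bigr)$ using orthogonality of the $\varphi_n$ and the SULE decay to control the off-diagonal cross terms, and then bounds the trace by $CL$ using that $R(H_\omega)$ maps $L^2$ boundedly into $L^\infty$ (so that $\chi_{3L}R(H_\omega)$ is Hilbert--Schmidt with norm $\lesssim \sqrt{L}$). Only after that is the eigenfunction-expansion argument you sketch a valid chain of estimates, and indeed the exponent $C$ in \eqref{1.4nn} has to exceed the exponent $22$ appearing in the SULE bound for the final series to converge. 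You should make both the counting lemma and this numerical relationship explicit before the expansion argument can be trusted.
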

	We prove this Theorem in Section~\ref{sec:half-line}. We deduce the second main result of the paper by combining Theorem~\ref{thm48} and the orthogonal decomposition of radial trees; see Section~\ref{rcm}.
	\begin{theorem}\lb{main2} Suppose $\supp\wti\mu$ contains at least two points. Then, there exists a discrete set $\mathfrak{D} \subseteq \R$ such that the following two assertions hold.
		\begin{enumerate}
		\item[{\rm(i)}] The operator $\bbH_{\omega}$ exhibits Anderson localization at all energies outside of $\mathfrak{D}$. That is, almost surely, $\bbH_{\omega}$ has pure point spectrum and any eigenfunction of $\bbH_\omega$ corresponding to an energy $E \in \sigma(\bbH_{\omega}) \setminus \mathfrak{D}$ enjoys an exponential decay estimate of the form
		\begin{equation} \label{eq:TEDest}
		|f(x)| \leq \frac{Ce^{-\lambda|x|}}{\sqrt{w_o(|x|)}}
		\end{equation}
with $C>0$ and $\lambda > 0$, where $w_o(|x|)$ denotes the number of vertices in the generation of $x$, i.e., $w_o(|x|) = \#\set{y \in \mathcal{V} : \gen(y) = \gen(x)}$.\vspace{2mm}
		\item[{\rm(ii)}] For every compact interval $I\in\bbR\setminus \mathfrak{D}$ and every $p>0$, there exists a set $ \Omega^*\subset\Omega$ with $\mu(\Omega^*)=1$ such that for every $\omega\in\Omega^*$ and every compact set $\cK\subset \Gamma_{b_{\omega}, \ell_{\omega}}$ one has
		\begin{equation}\lb{341n}
		\sup\limits_{t>0} \left\||X|^p\chi_I(\bbH_{\omega}) e^{-it\bbH_{\omega}}\chi_{\cK}\right\|_{L^2(\Gamma_{b_{\omega}, \ell_{\omega}})}<\infty,\
		\end{equation}
		where $\chi_I(\bbH_{\omega})$ is the spectral projection corresponding to $I$, and $|X|^p $ denotes the operator of multiplication by the radial function $f(x):=|x|^p$, $x\in \Gamma_{b_{\omega}, \ell_{\omega}}$, where $|x|$ denotes the distance from $x$ to the root $o$.
		\end{enumerate}
	\end{theorem}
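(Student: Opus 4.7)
My plan is to transfer the half-line localization results of Theorem \ref{thm48} to the tree via the orthogonal decomposition of the radial tree Laplacian into half-line Laplacians sketched just above \eqref{vcon} (cf.\ \cite{NS}, \cite{NS2}, \cite{Car}). For each $\omega \in \Omega$, one writes
\begin{equation}
L^2(\Gamma_{b_\omega,\ell_\omega}) = \cH_0^\omega \oplus \bigoplus_{j \geq 1} \bigoplus_{\alpha \in \cA_j^\omega} \cH_{j,\alpha}^\omega,
\end{equation}
where each $\cA_j^\omega$ is finite, each $\cH_{j,\alpha}^\omega$ is a reducing subspace of $\bbH_\omega$, and the restriction $\bbH_\omega|_{\cH_{j,\alpha}^\omega}$ is unitarily equivalent via an isometry $U_{j,\alpha}$ to a half-line operator $H_\omega^{(j)}$ on $L^2([t_j,\infty))$ of the form \eqref{vcon} with the generation index shifted by $j$. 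The key feature of the construction is that, when $g \in \cH_{j,\alpha}^\omega$ is pushed forward by $U_{j,\alpha}$, the change of variables introduces a geometric renormalization factor $\sqrt{w_o(|x|)/w_o(t_j)}$ coming from the multiplicity at each generation; this factor is precisely what produces the weight $\sqrt{w_o(|x|)}^{-1}$ in the right-hand side of \eqref{eq:TEDest}.

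For part (i), I would first observe that each $H_\omega^{(j)}$ belongs to the class of half-line operators treated in Theorem \ref{thm48}: by the product structure of $\mu = \wti\mu^{\bbN}$, shifting the generation index by $j$ simply produces another i.i.d.\ realization of the same random half-line model. Intersecting over the countably many pairs $(j,\alpha)$ and over a countable compact exhaustion of $\bbR \setminus \mathfrak D$ the full-measure events furnished by Theorem \ref{thm48} and the spectral-localization step in its proof, one obtains a single full-measure set $\wti\Omega$ on which every $H_\omega^{(j)}$ is spectrally localized outside $\mathfrak D$ with a uniform exponential rate $\lambda > 0$ on each compact energy window. For an eigenfunction $f$ of $\bbH_\omega$ with eigenvalue $E \notin \mathfrak D$, decompose $f = \sum_{j,\alpha} f_{j,\alpha}$ with $f_{j,\alpha} \in \cH_{j,\alpha}^\omega$; each nonzero $U_{j,\alpha} f_{j,\alpha}$ is an eigenfunction of $H_\omega^{(j)}$ and hence decays exponentially, so undoing the isometry and invoking the geometric weight delivers the pointwise estimate \eqref{eq:TEDest}.

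For part (ii), fix a compact $\cK \subset \Gamma_{b_\omega,\ell_\omega}$ and note that, because $\cK$ lies in a ball of finite radius while each $\cH_{j,\alpha}^\omega$ with $j$ large is supported beyond generation $j$, there are only finitely many pairs $(j,\alpha)$ with $P_{j,\alpha}\chi_\cK \neq 0$, where $P_{j,\alpha}$ is the orthogonal projection onto $\cH_{j,\alpha}^\omega$. For each such pair, $U_{j,\alpha}P_{j,\alpha}\chi_\cK$ is compactly supported in $[t_j,\infty)$ and hence trivially satisfies hypothesis \eqref{1.4nn}. Since $|X|^p$ is a radial multiplication that reduces each $\cH_{j,\alpha}^\omega$ and is intertwined with $|x|^p$ by $U_{j,\alpha}$, one arrives at
\begin{equation}
\||X|^p \chi_I(\bbH_\omega) e^{-it\bbH_\omega} \chi_\cK \|^2 = \sum_{(j,\alpha)}\||x|^p \chi_I(H_\omega^{(j)}) e^{-itH_\omega^{(j)}} U_{j,\alpha} P_{j,\alpha} \chi_\cK \|^2,
\end{equation}
a finite sum whose $\sup_t$ is bounded by summing the estimates furnished by Theorem \ref{thm48} on a further full-measure subset $\Omega^* \subseteq \wti\Omega$, yielding \eqref{341n}.

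The main technical obstacle I anticipate is the careful verification that the isometries $U_{j,\alpha}$ absorb exactly the combinatorial weight $\sqrt{w_o(|x|)}$ and intertwine $|X|^p$ with $|x|^p$, so that the sharp pointwise bound \eqref{eq:TEDest} and the operator-norm identity above emerge cleanly from the half-line statements; once this radial bookkeeping is carried out, no new dynamical input is required beyond Theorem \ref{thm48} and countable measure-theoretic intersections.
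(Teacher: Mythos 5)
Your proposal is correct and follows essentially the same route as the paper: decompose $L^2(\Gamma_{b_\omega,\ell_\omega})$ into the reducing subspaces $\cL_{v,k}$, note that the unitaries \eqref{29nn} carry the weight $\sqrt{w_v(|x|)}$ that produces \eqref{eq:TEDest}, intersect the shifted full-measure sets $T^{-n}\wti\Omega$ to localize all half-line operators $H_{T^n\omega}$ simultaneously, and for (ii) use that only the finitely many $v$ with $T_v\cap\cK\neq\emptyset$ contribute. The only cosmetic difference is that for (ii) you invoke Theorem~\ref{thm48} as a black box on each reducing subspace (applied to the compactly supported functions $U_{j,\alpha}P_{j,\alpha}\chi_\cK$, which trivially satisfy \eqref{1.4nn}), whereas the paper re-expands in the SULE basis and repeats the estimate \eqref{4.88n}--\eqref{4.91n}; both rest on the same input and your packaging is, if anything, slightly cleaner.
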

We note that the theorem above gives localization for RBM, RLM, and RKM. We also note that the spectrum of $\bbH_{\omega}$ is given by a deterministic set.  This is addressed in Section \ref{almsure} where we also point out that the analogous question for the half-line operator $H_{\omega}$ presents some complications which are not typical for full--line ergodic models, see Remark \ref{remark4.2}.
\begin{remark}
A few remarks:
\begin{enumerate}
\item The assumption that the support of the single-generation distribution contains at least two points is clearly necessary. For, if $\supp\wti\mu$ consists of a single point, then there is only one operator $H_\omega$, which is then periodic and hence does not exhibit Anderson localization.
\item We will refer to functions on trees obeying an estimate like \eqref{eq:TEDest} as \emph{tree-exponentially decaying}. Since the number of vertices at the $n$th generation grows exponentially with $n$, the factor of $\sqrt{w_o(|x|)}$ in the denominator implies that the eigenfunction decay leads to square-integrability.
\item The transfer matrices for the half-line models can be bounded at isolated energies, and hence one cannot avoid excluding a discrete set of energies. This will be discussed in more detail in Section~\ref{sec:half-line}.
\end{enumerate}
\end{remark}
	In Part~\ref{sec:discrete} we address analogous questions for the discrete versions of RBM, RLM, and RKM, namely, we consider discrete Schr\"odinger and weighted adjacency operators on radial trees with random branching numbers, hopping parameters, and vertex potentials. Concretely, we consider rooted radial tree graphs $\Gamma$ as before. Given functions $q: \cV \to \R$ and $p:\cE \to (0,\infty)$, the corresponding Schr\"odinger operators $\bbA$ and $\mathbb S$ are given by
	\begin{equation}\lb{int18}
			[\bbA f](u)
		=
		-\sum_{v \sim u} p(u,v)f(v),
		\quad f \in \ell^2(\cV),\ v \in \cV.
	\end{equation}
	\begin{equation}\lb{int19}
	[\mathbb S f](u)
	=
	\sum_{v \sim u} (q(u) f(u) - f(v)),
	\quad f \in \ell^2(\cV),\ v \in \cV.
	\end{equation}
	
	As before, we will assume that $b$, $p$, and $q$ are bounded radial functions, so the randomness will be encoded in a measure $\widetilde\mu$ which gives the distribution of branching numbers, edge weights, and vertex potentials in each generation. We will define this more precisely in Part~\ref{sec:discrete}.  Our third main result is the following theorem which is proved in Section~\ref{dynexp}. The quantity $w_y(r)$ in \eqref{536} below denotes the number of points  in the subtree rooted at $y$ that are at a distance $r$ from $y$; see \eqref{eq:wvdef} for the definition.
		\begin{theorem}\lb{thm5.6} Assume $\supp\wti\mu$ contains at least two points. Let $\bbJ_{\omega}=\mathbb A_{\omega}$ or $\bbJ_{\omega}=\mathbb S_{\omega}$.  Then there exists a  set $\cD$ of cardinality at most one such that the following assertions hold.
		\begin{enumerate}
		\item[{\rm(i)}]
		The operator $\bbJ_{\omega}$ exhibits Anderson localization at all energies outside of $\cD$. That is, almost surely, $\bbJ_{\omega}$ has pure point spectrum and any eigenfunction of $\bbJ_\omega$ corresponding to an energy $E \in \sigma(\bbJ_{\omega}) \setminus {\cD}$ enjoys an exponential decay estimate of the form
		
		%The operator $\bbJ_\omega$ exhibits spectral localization for all $\omega \in \Omega^*$. %That is, $\bbJ_\omega$ enjoys pure point spectrum and any eigenfunction of $\bbJ_\omega$ %enjoys an exponential decay estimate of the form
		\begin{equation}\lb{1.8nn}
		|f(x)| \leq \frac{C e^{-\lambda|x|}}{\sqrt{w_o(|x|)}}, x\in\cV,
		\end{equation}
		where $C, \lambda > 0$ are constants.
		\item[{\rm(ii)}] For every compact interval $I\subset \bbR\setminus\cD$ there exist $\Omega^*\subset\Omega$ with $\mu(\Omega^*)=1$ and $\theta>0$ such that for every $x,y\in\cV$, $|x|\geq |y|$,   $\omega\in\Omega^*$ one has
		\begin{equation}\lb{536}
		\sup\limits_{t>0}|\langle\delta_x, \chi_I(\bbJ_{\omega})e^{-it\bbJ_{\omega}} \delta_y\rangle_{\ell^2(\cV)}|\leq \frac{Ce^{-\theta\dist(x,y)}}{\sqrt{w_y(|x|-|y|)}},
		\end{equation}
		for some $C=C(y,\omega, \theta)>0$. In particular, for all $y\in\cV$, $\omega\in\Omega^*$, $R>0$ one has
		\begin{equation}\lb{537}
		\sum_{|x|\geq R} \sup\limits_{t>0} |\langle\delta_x, \chi_I(\bbJ_{\omega})e^{-it\bbJ_{\omega}} \delta_y\rangle_{\ell^2(\cV)}|\leq \gamma e^{-\kappa R},
		\end{equation}
		for some $\kappa=\kappa(y)>0$ and $\gamma=\gamma(y)>0$.
		\end{enumerate}
	\end{theorem}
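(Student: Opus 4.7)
The plan is to establish Theorem~\ref{thm5.6} by mirroring the strategy used in the continuum case (Theorems~\ref{thm48} and~\ref{main2}), adapted to the discrete setting. The proof splits naturally into three stages: a radial decomposition of $\ell^2(\cV)$, localization for the resulting half-line Jacobi matrices, and a lifting step that recovers the tree-exponential decay.

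First, I would exploit the radial structure of $\Gamma$ to orthogonally decompose $\ell^2(\cV)$ into a countable family of reducing subspaces for $\bbJ_\omega$, paralleling the decomposition used for $\bbH_\omega$ in the continuum part (cf.\ \cite{NS}, \cite{NS2}, \cite{Car}). Radial functions, i.e.\ those constant on each $\cV_n := \{v \in \cV : \gen(v) = n\}$, span one reducing subspace on which $\bbJ_\omega$ is unitarily equivalent to a weighted half-line Jacobi matrix whose coefficients are determined by $(b_j, p_j, q_j)_{j \ge 1}$. Orthogonal complements within each generation $\cV_n$, lifted to the descendant subtree, generate further reducing subspaces, each again unitarily equivalent to a half-line Jacobi matrix that is a shifted version of the first one, the shift corresponding to the generation at which a test function first fails to be radial on the relevant subtree.

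Second, on each half-line summand I would prove spectral and dynamical localization using the same transfer-matrix machinery employed for $H_\omega$ in Section~\ref{sec:half-line}. The $2\times 2$ unimodular transfer matrices depend randomly on the one-generation data, and F\"urstenberg's theorem (as in Theorem~\ref{thm34}) provides positivity of the Lyapunov exponent outside an exceptional energy set. In the discrete setting, the failure of strong irreducibility or contractivity can occur at at most one energy (the center of symmetry of the deterministic Jacobi skeleton), which is why $\cD$ has cardinality at most one rather than being merely discrete. The Bucaj--Damanik--Fillman argument \cite{BuDaFi}, already used in Theorem~\ref{main1}, then yields almost-sure exponential decay of every generalized eigenfunction in any compact $I \subset \bbR \setminus \cD$, together with the sharper SULE-type bound \eqref{426} in terms of centers of localization.

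Third, I would lift the half-line conclusions to the tree. An eigenfunction $\psi$ of a half-line summand at energy $E$ corresponds to an eigenfunction $f$ of $\bbJ_\omega$ that, beyond the shift generation, is constant on each level set of $\gen(\cdot)$ within a fixed subtree; by construction the tree $\ell^2$-norm equals the half-line $\ell^2$-norm, forcing $|f(x)|$ to be of order $|\psi(\gen(x))|/\sqrt{w_o(|x|)}$ and producing \eqref{1.8nn}. For part (ii), the SULE bound on the half-line is fed through the decomposition: the matrix element $\langle \delta_x, \chi_I(\bbJ_\omega) e^{-it\bbJ_\omega} \delta_y \rangle$ is expanded as a sum over reducing subspaces containing both $\delta_x$ and $\delta_y$ non-orthogonally, with each contribution carrying a $1/\sqrt{w_y(|x|-|y|)}$ weight from the normalization. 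Summing and applying \eqref{426} on each summand gives \eqref{536}, and \eqref{537} follows by summing \eqref{536} against the exponential growth of $w_y$.

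The main obstacle will be the bookkeeping in the lifting step: one must identify precisely which reducing subspaces are "visible" from a given pair $(\delta_x, \delta_y)$ and track how the half-line exponential decay combines with the weights $w_y(\cdot)$ to yield the correct asymmetric estimate \eqref{536}. A secondary subtlety is verifying the hypotheses of F\"urstenberg's theorem under the minimal two-point support assumption on $\wti\mu$, which requires checking strong irreducibility and contractivity of the generated matrix group off the single exceptional energy, rather than at generic parameters.
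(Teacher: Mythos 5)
Your three-stage plan (radial decomposition, half-line Jacobi localization, lifting) is the same one the paper follows, and the lifting bookkeeping you sketch is essentially what the paper does in Section~\ref{dynexp} via the basis $\{\varphi_{n,k,j}\}$ produced by the Breuer decomposition, with the $1/\sqrt{w_y(\cdot)}$ factors coming from the recursion \eqref{57nn} and the support constraint \eqref{56new} restricting the $n$-sum to $0\le n\le |y|$. So the architecture is right.

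However, the way you justify the improvement of the exceptional set from discrete to cardinality $\le 1$ is a genuine gap. You assert that strong irreducibility/contractivity can fail only at "the center of symmetry of the deterministic Jacobi skeleton", but this is neither precise nor correct. The paper's proof of Theorem~\ref{thm:discreteFurst} proceeds by a case analysis on the structure of $\supp\wti\mu$: in Case~1a ($b_1=b_2$, $q_1\ne q_2$) the exceptional set is \emph{empty}; in Case~1bi the exceptional energy is $E_0=(b_1+1)q_1=(b_2+1)q_2$, which is the common diagonal entry of the two Jacobi blocks, not a symmetry center; in Case~1bii it is the single zero $E_1$ of $\det g(E)$ where $g=[M_1,M_2]$ is affine in $E$; and only in Case~2 (adjacency, $q\equiv 0$) is the exceptional energy $E=0$, the point where your symmetry heuristic has some content. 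Supplying the actual argument requires checking, at each real $E$, whether $M_1(E)$ and $M_2(E)$ can simultaneously preserve a set of one or two projective directions, and this splits into cases in a way that your proposal does not anticipate. Relatedly, Theorem~\ref{thm:discreteFurst} is proved under Hypothesis~\ref{hyp61}, which in the adjacency case \eqref{516nn} requires the nondegeneracy condition that $p\sqrt{b}$ be nonconstant on $\supp\wti\mu$; without it all the decomposed Jacobi matrices have constant entries and the Lyapunov exponent vanishes identically. Your proposal, which invokes only "at least two points in the support", does not register that this additional condition is needed for the F\"urstenberg step to go through.
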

	It is proved in Section \ref{almsuredis} that the spectrum of $\bbA_{\omega}$ is given by a deterministic set.  It is interesting to contrast this result with the work of Klein \cite{K3} (see also \cite{ASW2, FHS} for alternative proofs), which works without the radial assumption. In that model, each vertex potential is an i.i.d. random variable, and that model exhibits absolutely continuous spectrum in suitable energy regions for small coupling; it therefore does not exhibit localization uniformly, whereas the model in this work does. In particular, the model of \cite{K3} is more random than this one, and yet the spectral type is more regular.
	
	Our work is motivated by the paper \cite{HiPo}, which investigated RLM and RKM, and can be viewed as a natural continuation of \cite{DS} where discrete RBM was considered. It is worth noting that the methods of \cite{HiPo} are not applicable in the present setting since they are based on spectral averaging and hence rely heavily on the assumption that the random variables are absolutely continuous. Of course, in the case of random branching numbers such a hypothesis cannot be made.  We stress again that RBM naturally presents the most challenging case of random models, which are commonly referred to as Bernoulli--Anderson-type models. A textbook discussion of some difficulties presented by Bernoulli-type potentials is provided in the Notes sections of Chapters 4, 7, and 12 of \cite{AWBook}.

	\subsection{Background} The spectral theory of Schr\"odinger operators on tree graphs has attracted a lot of attention cf., e.g., \cite{ASW1, ASW2, ASW3, Br07, Br07b, BDE, BF09, JBK, BL, Car, DS, EFK08, EFK11, EFK13, EHP, FHS1, FHS, FLSSS, GZ01, HM, HiPo, K1, K2, K3, NS, NS2, SST, SS, S}. The recurring topic in these works is the dependence of the spectrum of differential operators on the geometry of trees, in particular, on their growth rates, edge lengths, and branching numbers. For example,  Ekholm, Frank, and Kovarik established Lieb--Thirring inequalities which heavily depend on the growth rate and the global dimension of underlying trees, cf.\ \cite{EFK11}, and Frank and Kovarik obtained heat kernel estimates for various trees in \cite{EFK13}.  Evans, Harris, and Pick studied Hardy inequalities on trees in the context of eigenvalue counting for the  Neumann Laplacian on bounded domains with fractal boundaries cf.\ \cite{EH, EHP}. This topic was further developed by Naimark and Solomyak in \cite{NS, NS2}. As far as the discrete spectrum is concerned, Solomyak also obtained Weyl's asymptotic formula for compact metric trees with the standard power-law behavior replaced by $c(\Gamma)\sqrt{\lambda}\log{\lambda}$ (this hints on mixed dimensionality of the model) with $c(\Gamma)$ depending on the tree, cf.\ \cite{S}. Further, the dependence of the spectral type on the geometry was investigated by Breuer, Frank, and Kovarik in \cite{BF09, Br07}. Exponential decay of the eigenfunctions on trees (and more general graphs) was recently discussed by Harrell and  Maltsev in \cite{HM}. Aizenman, Sims, and Warzel studied the effects of disorder in the geometry of trees. In particular, they considered trees with edge lengths given by $\ell_e(\omega)=e^{\lambda \omega_e}$ where $\lambda\in[0,1]$ and $\set{\omega_e}_{e\in\cE}$ are i.i.d. random variables, and proved  in \cite{ASW1} that the absolutely continuous spectrum of the Laplace operator is continuous (in the sense of \cite[Theorem~1.1]{ASW1}) at $\lambda=0$ almost surely. That such a continuity property fails in the case of radial disorder is conjectured in \cite{ASW1} and proved by Hislop and Post in \cite{HiPo}. As already mentioned earlier, the existence of absolutely continuous spectrum for the Anderson Hamiltonian on the regular trees in the regime of small disorder was shown by Klein in \cite{K3} (and also by Aizenman, Sims, and Warzel in \cite{ASW2} as well as by Froese, Hasler, and Spitzer in \cite{FHS}). Thematically related recent results are due to Aizenman and Warzel \cite{AW11, AW13} showing delocalization near the spectral edges for random Schr\"odinger oprators on discrete trees.
	
	The structure of the paper follows. In Section~\ref{deterministic}, we discuss the spectral theory of deterministic continuum operators on metric tree graphs. We use this to set notation and to give the reader relevant background on a reduction from the metric tree graphs to Schr\"odinger operators on a half-line with singular potentials. We prove a localization result for these half-line operators in Section~\ref{sec:half-line}, which we then use to prove our main results for metric tree graphs in Section~\ref{rcm}. The case of discrete operators on random tree graphs is taken up in Part~\ref{sec:discrete}.
	
	\part{Anderson Localization for Continuum Radial Trees}
	\section{Spectral Theory of Deterministic Continuum Operators} \lb{deterministic}
	In this section we introduce deterministic Laplace operators on radial tree graphs, discuss their orthogonal decomposition, and establish several auxiliary results regarding the spectral theory of the one-dimensional half-line operators arising in such a decomposition.
	
	To set the stage, we fix a metric rooted tree $\Gamma = (\cV, \cE)$ with  vertices $\cV$, edges $\cE$, root $o\in\cV$, and edge lengths $\set{\ell_e}_{e\in\cE}$.  The shortest path connecting $x\in\Gamma$ and $y\in\Gamma$ and its length are denoted by $p(x,y)$ and $d(x,y)$, respectively, and $|x|:=d(o,x)$. The generation and the branching number of a vertex $v$ are defined by
	\[
	\gen(v) := \#\set{x\in\cV\setminus\set{v}: x\in p(o,v)}, \quad
	b(v) := \begin{cases} \deg (v)-1, & v\not=o,\\ 1 & v = o.
	\end{cases}
	\]
	In other words, $\gen(v)$ is the combinatorial graph distance from $v$ to the root and $b(v)$ is the number of children of $v$. For an edge $e = (u,v)$, we define $\gen(e) = \max(\gen(u), \gen(v))$. Furthermore, $T_v\subset \Gamma$ denotes the ``forward" subtree of $\Gamma$ rooted at $v$, that is,  $T_{v}:=\set{x\in\Gamma: v\in p(o,x), |v|\leq |x| }$; its branching function is given by
	\begin{equation}\lb{eq:wvdef}
	w_{v}(t):=\#\set{x\in T_v: d(v,x)=t},\ t>0.
	\end{equation}
	For example, given a vertex $v$, $w_o(|v|)$ counts the number of vertices in the same generation as $v$.

	\begin{hypothesis}\lb{na5}  $\Gamma$ is a rooted radial metric tree with bounded branching $b$ and bounded edge lengths, $\ell$, and $q: \cV \to \bbR$ is a bounded radial potential. More precisely:
    \begin{enumerate}		
		\item[{\rm(i)}]There are constants $b^\pm \in [2,\infty)$, $\ell^\pm \in (0,\infty)$ and sequences $b:=\set{b_n}_{n=0}^{\infty}$,  $\ell:=\set{ \ell_n}_{n=1}^{\infty}$ such that
		\begin{itemize}
			\item  $b(v)=b_{\gen(v)} \in [b^-,b^+]\cap \bbN$ for all $v\in\cV$ {\rm(}except, $b(o) = b_0  = 1${\rm)},
			\item  $\ell_e= \ell_{\gen(e)} \in [\ell^-,\ell^+]$ for all $e\in\cE$.
		\end{itemize}
		
		\item[{\rm(ii)}]  There are constants $q^\pm\in \R$ and a sequence $\set{q_n}_{n=1}^\infty$ such that $q(v)= q_{\gen(v)} \in [q^- ,q^+]$.
		\end{enumerate}
	\end{hypothesis}
When $\Gamma$ satisfies Hypothesis~\ref{na5}, we will write $\Gamma = \Gamma_{b,\ell}$ to emphasize the dependence of $\Gamma$ on the branching and length sequences.
	
Given $\Gamma$ satisfying Hypothesis~\ref{na5}, we equip $\bbR_+$ with a sequence of degree two vertices  $\set{t_j}_{j=1}^{\infty}$, where $t_j$ denotes the distance from the root to vertices at generation $j$, that is,
	\begin{equation}\lb{ts}
	t_0:=0,\ t_j:=\sum_{i=1}^{j}\ell_i,\ j>0.
	\end{equation}
	Then, we introduce the Sobolev spaces on such a chain of intervals
	\begin{align}
	&\hatt H^k(\bbR_+):=\bigoplus_{j=0}^{\infty} H^k(t_{j}, t_{j+1}),\; j\in\bbZ_+, \; k=0, 1, 2.
	\end{align}	
	A note on notation: throughout this paper, we write $\bbN$ for $\set{1,2,3,\ldots}$ and $\bbZ_+$ for $\bbN \cup\set{0}$. 	Let us note that we use the notation $\widehat{H}^k(\bbR_+)$ even though the exact composition of the space depends on the vertices $\set{t_j}_{j=0}^\infty$. Similarly, on $\Gamma$, we define
	\begin{align}
	&\hatt H^k(\Gamma):=\bigoplus_{e\in\cE} H^k(e),\ \|f\|^2_{\hatt H^k(\Gamma)}:=\sum_{e\in\cE} \|f\upharpoonright_{e}\|^2_{H^k(e)},\ k=0,1,2.
	\end{align}
	Notice that the elements of $\hatt H^k(\bbR_+)$ or $\hatt H^k(\Gamma)$ may be discontinuous at the vertices.

\subsection{Orthogonal Decomposition of Radial Trees} \lb{sec2.1}

Given  a radial tree $\Gamma_{b,\ell}$ and a potential $q$ satisfying Hypothesis~\ref{na5}, we consider the self-adjoint operator $\bbH = \bbH(b,\ell,q)$ defined by
	\begin{align}
	\begin{split}\lb{det1}
	&\bbH(b,\ell,q) :=-\frac{d^2}{dx^2},\ \bbH(b, \ell, q): \dom(\bbH(b, \ell, q))\subset L^2(\Gamma_{b,\ell})\rightarrow L^2(\Gamma_{b,\ell}),\\
	&\dom(\bbH(b, \ell, q))=\set{ f\in\hatt H^2(\Gamma_{b,\ell}):
	\begin{matrix}
	f\text{\ satisfies \eqref{eq:dirichletBCgraph} and \eqref{con3}}
	\end{matrix}}.\\
	\end{split}
	\end{align}
Due to the radial structure of the graph,  $L^2(\Gamma_{b, \ell})$ enjoys an orthogonal decomposition into $\bbH$-reducing subspaces; cf.\ \cite{Car}, \cite{NS}, \cite{SS}, \cite{S}.
	Namely, to every vertex $v\in\cV$ there corresponds an $\bbH$-reducing subspace $\cS_v$ such that
	\begin{align}\lb{24new}
	L^2(\Gamma_{b,\ell})=\bigoplus_{v\in \cV}\cS_v,\ \   \bbH P_{\cS_v} = P_{\cS_v}\bbH,
	\end{align}
	where $P_{\cS_v}$ denotes the orthogonal projection onto $\cS_v$ in $L^2(\Gamma_{b,\ell})$.  Furthermore, each subspace $\cS_v$ can be further decomposed into $b_{\gen(v)}-1$ subspaces, each of which is also  $\bbH$-reducing, that is,
	\begin{equation}\lb{25new}
	 \cS_v = \begin{cases}
	\bigoplus\limits_{k=1}^{b_{\gen(v)}-1} \cL_{v,k},\   & v\not=o,\\
	\qquad\cL_o,\  &v=o,
	\end{cases}
	\end{equation}
	and $\bbH P_{\cL_{v,k}} = P_{\cL_{v,k}}\bbH,$ $\bbH P_{\cL_{o}} = P_{\cL_{o}}\bbH$. Moreover, the reduced operators are unitarily equivalent to 1D Schr\"odinger operators acting in $L^2(\bbR_+)$. Concretely, the operators
	\begin{equation}
\bbH(b, \ell, q)P_{\cL_{v,k},},\   \bbH(b, \ell, q)P_{\cL_{o},}
	\end{equation}
 are unitarily equivalent to the operator
	\begin{equation}\lb{2.9newnew}
	H(T^{\gen(v)}b, T^{\gen(v)} \ell, T^{\gen(v)} q)\text{\ acting in\ } L^2(t_{\gen(v)}, \infty),\ v\in\cV
	\end{equation}
	where  $T$ denotes the left shift $(Tx)_n:=x_{n+1}$ and
	\begin{align}
	&H(T^\varkappa b, T^\varkappa \ell, T^\varkappa q):=-\frac{d^2}{dx^2},\\
	&H(T^\varkappa b, T^\varkappa \ell, T^\varkappa q): \dom(H(T^\varkappa b, T^\varkappa \ell, T^\varkappa q))\subset L^2(t_{\varkappa},\infty)\rightarrow L^2(t_{\varkappa},\infty) \lb{28op}\\
	&\dom(H(T^\varkappa b, T^\varkappa \ell, T^\varkappa q))=\set{f\in \hatt H^2(t_\varkappa ,\infty):\ \begin{matrix}
	f(t_\varkappa ^+)=0, \begin{matrix}
	\text{$f$ satisfies \eqref{vcon}\,}\\
	\text{for all }j>\varkappa\
	\end{matrix}
	\end{matrix}}\lb{29new}
	\end{align}
	for $\varkappa\in\bbZ_+$.
	The unitary map
	\begin{equation}
	\cU_{v,k}:  \cL_{v,k}\rightarrow L^2(t_{\gen(v)}, \infty), v\in\cV\setminus \set{o}, 1\leq k\leq b_{\gen(v)}-1,
	\end{equation}
	 realizing the equivalence is defined by
	\begin{equation}\lb{29nn}
	(\cU_{v,k}^{-1} f)(x)=\begin{cases}
	{\frac{\exp\left({\frac{2\pi\bfi j k}{b_{\gen(v)}}}\right)f(|x|)} {\sqrt{w_v(|x|)}}},& x\in T_v(j),\  1\leq j\leq b_{\gen (v)},\\
	0,& \text{otherwise},
	\end{cases}
	\end{equation}
	where $T_v(j)\subset T_v$ denotes the forward subtree determined by the $j$th edge emanating from the vertex $v$. Letting $k=0$ in \eqref{29nn}, one defines $\cU_{o}$.
	We point out that $(\cU_{v,k}^{-1} f)\in \dom(\bbH(b,\ell,q))$ whenever $f$ belongs to the domain of the operator defined in \eqref{2.9newnew}. Indeed, continuity of $\cU^{-1}_{v,k}f$ at $v$ is ensured by the Dirichlet condition \eqref{29new} while the Kirchhoff condition at $v$ is satisfied due to \eqref{29nn} and the fact that the sum of roots of unity is equal to zero. At all other vertices, one has continuity and the Kirchhoff condition by \eqref{vcon}.

	 Combining these unitary operators together, one defines
	\begin{equation}\lb{212new}
	\Psi_{b, \ell}:=\cU_{o} \oplus \bigoplus_{v\in\cV \setminus\set{o}} \bigoplus_{k=1}^{b_{\gen(v)}-1} \cU_{v,k},
	\end{equation}	and has, \cite[Theorem~4.1]{NS},
	\begin{align}\lb{na}
	&\Psi_{b, \ell}:  L^2(\Gamma_{b, \ell})\rightarrow   \bigoplus_{n=0}^{\infty} \bigoplus_{k=1}^{m(n)}  L^2(t_n,\infty),\\
	&\Psi_{b, \ell} \bbH(b, \ell, q)\Psi^{-1}_{b, \ell}\lb{na8}
	=\bigoplus_{n=0}^{\infty} \bigoplus_{k=1}^{m(n)} H(T^nb, T^n\ell, T^nq),\\
	&m(n):=\begin{cases}\lb{213nn}
	b_0\cdot b_1\cdots\cdot b_{n-1}\cdot(b_{n}-1),\ &n \geq 1,\\
	1,\ &n=0.
	\end{cases}
	\end{align}
	
	Next, we turn to the spectral analysis of $H(b,\ell, q)$ for fixed admissible $b,\ell,q$. First, the eigenvalue problem for this operator can be written in terms of suitable $\SL(2,\bbR)$ matrices. Namely, if $f$ is a solution to the problem
	\begin{equation}
	\begin{cases}\lb{na11}
	-f''=Ef,\ f(t_0)=0,\\[2mm]
	{f(t_{j}^+)}=\sqrt{b_j}f(t_{j}^-) & j \in \bbN\\[2mm]
	{f'(t_{j}^+)}=\frac{f'(t_{j}^-)+q_jf(t_j^-)}{\sqrt{b_j}}  & j \in \bbN, \\[2mm]
	f \in H^2(t_{j-1},t_j)  & j \in \bbN,
	\end{cases}
	\end{equation}
	then one has
	\begin{align} \label{eq:halfLineTMdiscrete}
	&\begin{bmatrix}
	f(t_j^+)\\
	f'(t_j^+)
	\end{bmatrix}=M^{E}(b_j,\ell_j,q_j)
	\begin{bmatrix}
	f(t_{j-1}^+)\\
	f'(t_{j-1}^+)
	\end{bmatrix}\text{\ for all\ }\ j\in\bbN,
	\end{align}
	where $M^{E}(\beta,\lambda,\varkappa):=D(\beta) S(\varkappa)R_{\sqrt{E} }(\lambda \sqrt{E})$ , $\Im(\sqrt{E})\geq 0$ and
	\begin{align}
	&D(\beta) := \begin{bmatrix}
	\beta^{1/2} & 0 \\ 0 & \beta^{-1/2}
	\end{bmatrix},
	\ S(\varkappa):=
	\begin{bmatrix}
	1 & 0 \\ \varkappa & 1
	\end{bmatrix},\
	R_\mu(\varphi) :=
	\begin{bmatrix}
	\cos\varphi &\frac{\sin\varphi}{\mu} \\
	-\mu\sin\varphi & \cos\varphi
	\end{bmatrix}\lb{227n}.
	\end{align}
	In this case, we can interpolate between the vertices to get
	\begin{equation}
	f(x)=
	f(t_{j-1}^+)\cos(\sqrt{E}(x-t_{j-1}))+\frac{f'(t_{j-1}^+)\sin(\sqrt{E}(x-t_{j-1}))}{\sqrt{E}},\lb{2.23}\\
	\end{equation}
	for all $x\in (t_{j-1}, t_j)$, $j\in\Z_+$. Conversely, given initial data $(f(0^+),f'(0^+))^\top$, then \eqref{eq:halfLineTMdiscrete} and \eqref{2.23} construct a solution to the problem \eqref{na11}. Furthermore, $f\in L^2(\bbR_+)$ if and only if
	\begin{align}\lb{225}
	\set{\begin{bmatrix}
	f(t_j^+)\\
	f'(t_j^+)
	\end{bmatrix}}_{j=0}^{\infty}\in \ell^2(\bbZ_+,\C^2).
	\end{align}

\subsection{Quadratic Form of the Model Half-Line Operator}

The following proposition describes the quadratic form of $H(b,\ell, q)$ and provides prerequisites for the Weyl criteria used in  the proof of later results (e.g.\ Theorem~\ref{prop3.1}).
	\begin{lemma}\lb{prop2.2} Assume Hypothesis~\ref{na5} and  consider the sesquilinear form $\gh = \gh(b,\ell,q)$ defined by
		\begin{align}
		&\gh: \dom(\gh)\times\dom(\mathfrak{h}) \rightarrow \bbC,\lb{2.11new}\\
		&\dom(\gh)
		  =\set{f\in\hatt H^1(t_0,\infty):\ \begin{matrix}
		f(0^+)=0,\\
		\sqrt{b_j} f(t_{j}^-)=f(t_{j}^+),\ j>0
		\end{matrix}},\lb{213}\\
		&\gh[u,v] = \langle u',v' \rangle_{L^2(t_0,\infty)}+ \sum_{j=1}^{\infty}q_j\overline{u(t_j^-)}v(t_j^-) \quad u,v\in\dom(\gh).\lb{2.13new}
		\end{align}
		Then $\gh$ is densely defined, closed, and bounded from below $($i.e. for some $\gamma\in\bbR$ one has $
		\mathfrak{h}[u,u]\geq \gamma \|u\|_{L^2(t_0,\infty)}^2,
		$ $u\in\dom(\mathfrak{h})$$)$. It is uniquely associated with the operator $H = H(b,\ell, q)$, that is,
		\begin{equation}\lb{nc1}
		\mathfrak{h}[u,v]
		= \langle u,Hv\rangle_{L^2(t_0,\infty)},
		\end{equation}
		for all $u\in\dom(\mathfrak{h})$ and $v\in \dom(H)$.
		Furthermore, there exist positive constants $c,C>0$ such that
		\begin{equation}\lb{2.9}
		c\|u\|^2_{\hatt H^1(t_0,\infty)}\leq (\gh - \gamma+1)[u,u]\leq C\|u\|^2_{\hatt H^1(t_0,\infty)},\, u\in\dom(\mathfrak{h}),
		\end{equation}
		where $\gamma$ is a lower bound of $\gh$. In addition, the space of compactly supported functions contained in $\dom(\gh)$ is a core of the form $\gh$.
	\end{lemma}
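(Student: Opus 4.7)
The plan is to build the analytic properties of $\gh$ around a single quantitative trace inequality on each edge, then deduce semiboundedness, the norm equivalence \eqref{2.9}, and closedness in short succession, and finally verify the association \eqref{nc1} and the core property via integration by parts and cutoffs.

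The key ingredient is the following trace bound: for any $u\in \hatt H^1(t_0,\infty)$ and any $\delta$ with $0<\delta<\ell^-$, writing $u(t_j^-)=u(x)+\int_x^{t_j} u'(s)\,ds$ for $x\in[t_j-\delta,t_j]$, applying Cauchy--Schwarz, and integrating over $x$ produces
\[
|u(t_j^-)|^2 \leq \frac{2}{\delta}\|u\|^2_{L^2(t_j-\delta,t_j)} + 2\delta\|u'\|^2_{L^2(t_j-\delta,t_j)}.
\]
Because the intervals $(t_j-\delta,t_j)$ are pairwise disjoint when $\delta<\ell^-$, summation gives $\sum_{j\geq 1} |u(t_j^-)|^2 \leq \frac{2}{\delta}\|u\|^2_{L^2(t_0,\infty)} + 2\delta\|u'\|^2_{L^2(t_0,\infty)}$. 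Setting $Q:=\max(|q^-|,|q^+|)$ and choosing $\delta$ so that $2Q\delta\leq \frac{1}{2}$ yields $\gh[u,u]\geq \frac{1}{2}\|u'\|^2_{L^2(t_0,\infty)} - \frac{2Q}{\delta}\|u\|^2_{L^2(t_0,\infty)}$, giving semiboundedness with $\gamma:=-2Q/\delta$, and combining with the matching upper bound produces the two-sided estimate \eqref{2.9}. Closedness then follows at once: any $\gh$-Cauchy, $L^2$-convergent sequence is $\hatt H^1$-Cauchy by \eqref{2.9}, and the trace bound guarantees that the continuity and Dirichlet conditions in \eqref{213} pass to the limit. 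Density is immediate since smooth functions supported inside any single open edge lie in $\dom(\gh)$ and are dense in $L^2(\bbR_+)$.

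For the association \eqref{nc1}, I would expand $\gh[u,v]$ for $u\in\dom(\gh)$, $v\in\dom(H)$ by integrating $\langle u',v'\rangle$ by parts on each $(t_{j-1},t_j)$; the Kirchhoff conditions \eqref{vcon} satisfied by $v$ and the continuity conditions \eqref{213} satisfied by $u$, together with the $\sqrt{b_j}$ matching factors, collapse the resulting boundary sum into $\sum_j q_j \overline{v(t_j^-)}u(t_j^-)$, leaving precisely $\langle u, Hv\rangle_{L^2(t_0,\infty)}$. For the core property, I would introduce a smooth cutoff $\chi_R$ equal to $1$ on $[0,R]$, supported in $[0,R+1]$, with $|\chi_R'|$ bounded uniformly in $R$; since $\chi_R$ is smooth everywhere, $\chi_R u$ lies in $\dom(\gh)$ and is compactly supported, and $\chi_R u\to u$ in the $(\gh-\gamma+1)^{1/2}$ norm by dominated convergence combined with the trace bound applied to $(1-\chi_R)u$.

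The main technical obstacle is the trace--summation step: one must pick $\delta<\ell^-$ so that the half-intervals $(t_j-\delta,t_j)$ remain pairwise disjoint across all $j\in\bbN$ in order to obtain a global control of $\sum_j|u(t_j^-)|^2$ by a single $L^2$ norm, and one must then balance $\delta$ against $Q$ carefully enough to absorb the potential sum into the kinetic term rather than merely into the $\hatt H^1$ norm — the latter suffices for \eqref{2.9} but is not strong enough for semiboundedness.
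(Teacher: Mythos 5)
Your proof is correct and follows essentially the same route as the paper: both establish a single-edge trace inequality with a tunable small parameter, sum it over the disjoint collar intervals near the vertices, balance the parameter to absorb the potential contribution into the kinetic term (giving semiboundedness and \eqref{2.9}), deduce closedness from the resulting norm equivalence plus convergence of traces, and verify \eqref{nc1} by integrating by parts against the vertex conditions. The only difference is cosmetic — you derive the trace bound explicitly from the fundamental theorem of calculus rather than citing a Sobolev-type inequality, and you spell out the cutoff argument for the core property, which the paper asserts without elaboration.
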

	\begin{proof}
		Throughout this proof we will abbreviate $\mathfrak h:=\mathfrak h(b,\ell, q)$ 	and $H:=H(b,\ell, q)$ for an admissible fixed triple $(b,\ell, q)$.  First, we show that $\mathfrak h$ is bounded from below. If $q^-\geq 0$, the  form is non-negative. Suppose that $q^-<0$.  By a standard Sobolev-type inequality (cf., e.g. \cite[Corollary~4.2.10]{Bu}, \cite[IV.1.2]{K80}) one has
		\begin{equation}\lb{2.10}
		\max \set{ |u(t^+_{j-1})|^2,|u(t^-_{j})|^2}\lesssim C\|u\|^2_{L^2(t_{j-1}, t_j)}+\varepsilon\|u'\|^2_{L^2(t_{j-1}, t_j)},
		\end{equation}
		for all $\varepsilon>0$ and  $j\in\bbN$, where $C=C(\varepsilon, \ell^-,\ell^+)>0$. Then
		\begin{align}
		\mathfrak{h}[u,u]&\gtrsim \|u'\|^2_{L^2(t_0,\infty)} + q^- C\|u\|_{L^2(t_0,\infty)}^2 + q^-\varepsilon\|u'\|^2_{L^2(t_0,\infty)} \lb{213new}\\
		&\geq  (1+q^-\varepsilon)\|u'\|^2_{L^2(\bbR_+)}+q^- C\|u\|_{L^2(t_0,\infty)}^2\lb{nc2}\\&\geq \gamma \|u\|_{L^2(t_0,\infty)}^2,\lb{2.14}
		\end{align}
		where we chose $\varepsilon>0$ so that $1+q^-\varepsilon> 0$ and set $\gamma:=q^- C$.
		
		Next, we prove that $\mathfrak h$ is closed, i.e., that $\dom(\mathfrak{h})$ is closed with respect to the topology induced by the inner product $\gh-\gamma+1$. First, using  \eqref{213new}--\eqref{2.14} one infers
		\begin{equation}\lb{2.12}
		(\mathfrak h-\gamma+1)[u,u] \gtrsim \|u\|_{\hatt H^1(t_0,\infty)}^2.
		\end{equation}
		Suppose that $\set{u_k}_{k\geq 1}\subset \dom(\mathfrak h)$ is a Cauchy sequence with respect to the inner product $\mathfrak h-\gamma+1$. In that case, it is Cauchy  in $\hatt H^1(t_0,\infty)$ and hence has a limit $u\in \hatt H^1(t_0,\infty)$:
		\begin{equation}\lb{2.13}
		u_k \underset{{\small\hatt H^1(t_0,\infty)}}{\longrightarrow} u, k\rightarrow\infty.
		\end{equation}
		In order to show that $\mathfrak h$ is closed, it is enough to prove that $u$ satisfies the vertex conditions at every vertex $t_j$. To that end, we notice that for all $k\in\bbN$, $j>0$ we have $\sqrt{b_j}u_k(t_{j}^-)=u_k(t_{j}^+)$. Then, by \eqref{2.10} and \eqref{2.13} we may pass to the limit as $k\rightarrow\infty$ and obtain $\sqrt{b_j}u(t_{j}^-)=u(t_{j}^+)$ for all $j> 0$. Similarly, we get $u(t_0^+)=0$.
		
		The first inequality in \eqref{2.9} is already proved; see \eqref{2.12}. The second one follows from the Cauchy--Schwarz inequality and the Sobolev-type estimate \eqref{2.10}.
		
		Next, we prove \eqref{nc1}. Notice that the subspace
		\begin{equation}
		\set{v\in\dom(H): \text{supp}(v) \text{\ is compact in }[t_0,\infty) }\subset\dom(H),\no
		\end{equation}
		is a core of $H$. Hence it is sufficient to check \eqref{nc1} for arbitrary $u\in\dom(\mathfrak{h})$, $v\in \dom(H)$ with supp$(v)\subset [t_0, t_K)$ for some $K\in\bbN$. One has
		\begin{align}
		&\langle u,Hv\rangle_{L^2(t_0,\infty)}=-\sum_{j=1}^{K}\int_{t_{j-1}}^{t_{j}} \overline{u(x)}v''(x) \, dx\no\\
		&\quad=\langle u',v'\rangle_{L^2(t_0,\infty)}+\overline{u(t_0^+)}v'(t_0^+)+\sum_{j=1}^{K}\overline{u(t_j^+)} v'(t_j^+)-\overline{u(t_j^-)} v'(t_j^-)\no\\
		&\quad=\langle u',v'\rangle_{L^2(t_0,\infty)}+\sum_{j=1}^{K}\overline{\sqrt{b_j}u(t_j^-)}\frac{{v'(t_j^-)+q_jv(t_j^-)}}{\sqrt{b_j}}-\overline{u(t_j^-)}{v'(t_j^-)} \\
		& \quad =\mathfrak h[u,v].\no
		\end{align}
	\end{proof}
	The following  Weyl-type criterion holds.
	\begin{proposition}\lb{rem2.3}
Assume Hypothesis~\ref{na5}, and denote $\gh = \gh(b,\ell,q)$ and $H = H(b,\ell,q)$ as in Lemma~\ref{prop2.2}.  Let  $D\subset\dom(\gh) $ be a dense subset with respect to the $\hatt H^1(t_0,\infty)$ norm {\rm(}or, equivalently, with respect to the norm $\|\cdot \|^2_{\mathfrak h} := (\gh-\gamma+~1)[\cdot,\cdot]${\rm)}. Then  $E\in\sigma(H)$ if and only if there exist $\set{\varphi_k}_{k=1}^{\infty}\subset D$ and $\set{m_k}_{k=1}^{\infty}\subset\bbN$ such that
		\begin{align}
		&\|\varphi_k\|_{L^2(t_0,\infty)}=1,\  \supp(\varphi_k)\subset[t_0,t_{m_k}], \lb{2.22n}\\
		&\sup\limits_{k\in\bbN}\|\varphi_k\|_{\hatt H^1(t_0,\infty)}<\infty,\lb{223n}\\
		&\sup\limits_{\substack{g\in\dom(\gh) \\ \|g\|_{ \hatt H^1(t_0,\infty)}\leq 1}}  (\gh - E)[\varphi_k, g]\rightarrow 0,\ k\rightarrow\infty.\lb{2.24n}
		\end{align}
	\end{proposition}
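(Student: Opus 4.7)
The plan is to reduce both directions of this form-Weyl criterion to the classical Weyl criterion for the self-adjoint operator $H=H(b,\ell,q)$ via the form--operator identity \eqref{nc1} and the norm equivalence \eqref{2.9}.

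For the direction $(\Leftarrow)$, suppose $\{\varphi_k\}\subset D$ satisfies \eqref{2.22n}--\eqref{2.24n} and argue by contradiction: if $E\notin\sigma(H)$, then $R:=(H-E)^{-1}$ is bounded on $L^2(t_0,\infty)$, and $u_k:=R\varphi_k\in\dom(H)\subset\dom(\gh)$. Using \eqref{nc1}, the identity
\[
(\gh-\gamma+1)[u_k,u_k]=(E-\gamma+1)\|u_k\|_{L^2}^2+\langle u_k,\varphi_k\rangle,
\]
combined with $\|u_k\|_{L^2}\leq\|R\|$ and \eqref{2.9}, yields a uniform bound $\|u_k\|_{\hatt H^1}\leq C$. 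Then $g_k:=u_k/\|u_k\|_{\hatt H^1}$ has $\|g_k\|_{\hatt H^1}=1$, and \eqref{nc1} gives
\[
(\gh-E)[\varphi_k,g_k]=\langle\varphi_k,\varphi_k\rangle/\|u_k\|_{\hatt H^1}=1/\|u_k\|_{\hatt H^1}\geq 1/C,
\]
contradicting \eqref{2.24n}.

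For the direction $(\Rightarrow)$, the classical Weyl criterion for the self-adjoint operator $H$ yields $\{\psi_k\}\subset\dom(H)$ with $\|\psi_k\|_{L^2}=1$ and $\|(H-E)\psi_k\|_{L^2}\to 0$. Using the symmetry of $\gh$ and \eqref{nc1}, one has $(\gh-E)[\psi_k,g]=\langle(H-E)\psi_k,g\rangle$ for every $g\in\dom(\gh)$; combined with $\|g\|_{L^2}\leq\|g\|_{\hatt H^1}$ this gives the form-sup condition along $\{\psi_k\}$, while the uniform $\hatt H^1$-bound follows from $(\gh-\gamma+1)[\psi_k,\psi_k]=(E-\gamma+1)+\langle\psi_k,(H-E)\psi_k\rangle$ via \eqref{2.9}. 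To enforce compact support, I multiply $\psi_k$ by a smooth real cutoff $\chi_k$ equal to $1$ on $[t_0,t_{m_k}]$, vanishing past $t_{m_k+1}$, with $\|\chi_k'\|_\infty$ bounded uniformly in $k$, and with $m_k$ chosen large enough that $\|\psi_k\|_{L^2([t_0,t_{m_k}])}\geq 1/2$ and $\|\psi_k\|_{\hatt H^1((t_{m_k},t_{m_k+1}))}<1/k$. Continuity of $\chi_k$ at every vertex preserves the matching condition, so $\chi_k\psi_k\in\dom(\gh)$ is compactly supported in $[t_0,t_{m_k+1}]$, and the commutator identity
\[
(\gh-E)[\chi_k\psi_k,g]=(\gh-E)[\psi_k,\chi_k g]+\int_{t_0}^\infty\chi_k'\bigl(\overline{\psi_k}g'-\overline{\psi_k'}g\bigr)\,dx
\]
bounds $\sup_{\|g\|_{\hatt H^1}\leq 1}|(\gh-E)[\chi_k\psi_k,g]|$ by $\|(H-E)\psi_k\|_{L^2}+O(1/k)$.

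The final step is to approximate the compactly supported functions $\chi_k\psi_k/\|\chi_k\psi_k\|_{L^2}\in\dom(\gh)$ by elements of $D$: since compactly supported functions are $\hatt H^1$-dense in $\dom(\gh)$ by the core statement at the end of Lemma~\ref{prop2.2} and $D$ is itself $\hatt H^1$-dense, one selects $\varphi_k\in D$ within $\hatt H^1$-distance $1/k$, and \eqref{2.9} transfers the $L^2$-normalization (after a harmless renormalization), the uniform $\hatt H^1$-bound, and the form convergence. I expect the main technical subtlety to be ensuring that the approximating $\varphi_k$ still have support in some $[t_0,t_{m_k'}]$; this is addressed either by further truncating each approximant---permissible provided $D$ is closed under multiplication by admissible cutoffs---or, more cleanly, by taking $D$ from the outset to consist of compactly supported functions in $\dom(\gh)$, as is the case in all intended applications of the proposition.
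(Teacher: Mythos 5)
Your proof is correct and follows essentially the same route as the paper's: both directions reduce to the standard Weyl criterion via the norm equivalence \eqref{2.9}, with the ``only if'' direction resting on the fact that compactly supported functions form a core (the last part of Lemma~\ref{prop2.2}). The paper simply delegates the sufficiency direction to a cited form-version of Weyl's criterion and is terser about the truncation step; your resolvent contradiction argument and explicit cutoff/commutator estimate fill in exactly those details, and your closing observation that $D$ must contain compactly supported elements for the necessity direction to be literally attainable correctly flags an imprecision that the paper's own proof also glosses over.
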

	\begin{proof}
Since the norm $\|\cdot\|_{ \hatt H^1(t_0,\infty)}$  is equivalent to the form domain norm $\|\cdot \|_{\mathfrak h}$, \eqref{2.22n}, \eqref{2.24n}, together with the standard Weyl's criterion cf., e.g, \cite[Proposition~1.4.4]{St01}, yield $E\in\sigma(H)$ proving the ``if" part.
		
		To prove the ``only if~" part we combine Weyl's criterion and the last part of Lemma~\ref{prop2.2} to obtain a sequence satisfying \eqref{2.22n}, \eqref{2.24n}. Without loss of generality we may assume that $\gamma
		\geq0$. In that case, one has
		\begin{align}
		\|\varphi_k\|^2_{\hatt H^1(t_0,\infty)}&\lesssim  | \gh[\varphi_k, \varphi_k]|
		= \sup\limits_{\substack{g\in\dom(\gh) \\ \|g\|_{\gh}= 1}}   |\gh[\varphi_k, g]|\\
		&\leq \sup\limits_{\substack{g\in\dom(\gh) \\ \|g\|_{\gh}= 1}}   |(\gh-E)[\varphi_k, g]|\\
		&\qquad+\sup\limits_{\substack{g\in\dom(\gh) \\ \|g\|_{\mathfrak h }= 1}}   | E\langle \varphi_k, g \rangle_{L^2(\bbR_+)}|\underset{k\rightarrow\infty}{=}o(1)+\mathcal{O}(1).
		\end{align}
		Thus \eqref{223n} holds as asserted.
	\end{proof}
	
	In the sequel we will refer to the Dirichlet--Neumann truncation of the half-line operator $H(b,\ell,q)$ defined as follows
	\begin{align}
	\begin{split}\no
	&H^{k}(b,\ell, q):=-\frac{d^2}{dx^2},\\
	&H^{k}(b,\ell, q): \dom(H^{k}(b,\ell, q))\subset L^2(t_0,t_{k})\rightarrow L^2(t_0,t_{k}),\\
	&\dom(H^{k}(b,\ell, q)) = \set{\hatt H^2(t_0,t_{k}):\ \begin{matrix}
	f(t_0^+)=f'(t_{k}^-)=0\\
	\text{$f$ satisfies \eqref{vcon}\ for all}\ 0< j<k
	\end{matrix}}.
	\end{split}
	\end{align}

	\begin{proposition}\lb{prop24}
	Let us fix $n\geq 1$, $E\not\in\sigma( H^n(b,\ell,q))$, and suppose that $u_\pm $ satisfy \eqref{vcon} for all $0<j<n$, $-u''_{\pm}=Eu_{\pm}$, $u_-(t_0^+) = u'_+(t_{n}^-)=0$, and $u'_-(t_0^+)=u_+(t_{n}^-)=1$.
	Then the Green function of the operator $H^n(b,\ell,q)$ is given by
	\begin{equation}\lb{238}
	G^E_{n}(x,y)=G^E_{[t_0,t_{n}]}(x,y):=\frac{1}{W(u_+, u_-)}\begin{cases}
	u_+( y) u_-( x),& y\geq x,\\
	u_+( x) u_-(y),& y\leq x,\\
	\end{cases}
	\end{equation}
	where $0\not=W(u_+, u_-)=u'_-(t_{n}^-)=u_+(t_0^+)$ denotes the Wronskian of linearly independent solutions $u_{\pm}$. That is, $(H^n(b,\ell,q)-E)^{-1}$ is an integral operator with the kernel $G^E_{[t_0,t_{n}]}$.
\end{proposition}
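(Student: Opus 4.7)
The plan is to execute the classical variation-of-parameters construction of the resolvent, adapted to the vertex conditions \eqref{vcon}. The central object is the bilinear form $W(u_+,u_-):=u_+u'_- - u'_+u_-$, which will be shown to be a nonzero constant on $(t_0,t_n)$ playing the role of a Wronskian. Three steps are involved: (i) establish that $W(u_+,u_-)$ is constant on the whole of $(t_0,t_n)$, including across interior vertices; (ii) evaluate $W$ at both endpoints using the prescribed data for $u_\pm$; (iii) verify that $G^E_n$ defined by \eqref{238} is the Schwartz kernel of $(H^n(b,\ell,q)-E)^{-1}$.

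For step (i), within each open subinterval $(t_{j-1},t_j)$ one has $W' = u_+ u''_- - u''_+ u_- = 0$ by the eigenvalue equation, so $W$ is locally constant. At an interior vertex $t_j$ (with $0<j<n$), substituting \eqref{vcon} for both $u_+$ and $u_-$ into $W(t_j^+)$ and simplifying, the factors $\sqrt{b_j}$ cancel and the $q_j$ terms telescope to yield $W(t_j^+) = u_+(t_j^-)u'_-(t_j^-) - u'_+(t_j^-)u_-(t_j^-) = W(t_j^-)$. Hence $W$ is a single constant on $(t_0,t_n)$, and the assumption $E\notin\sigma(H^n(b,\ell,q))$ forces $W\neq 0$: otherwise $u_+$ and $u_-$ would be linearly dependent (neither is trivial because $u'_-(t_0^+)=u_+(t_n^-)=1$), and their common nonzero multiple would simultaneously satisfy $f(t_0^+)=0$, $f'(t_n^-)=0$, the interior vertex conditions, and $-f''=Ef$, producing an eigenvector of $H^n(b,\ell,q)$ at energy $E$, which contradicts the spectral hypothesis. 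For step (ii), evaluating $W$ at $t_0^+$ using $u_-(t_0^+)=0$ and $u'_-(t_0^+)=1$ gives $W = u_+(t_0^+)$, and at $t_n^-$ using $u_+(t_n^-)=1$ and $u'_+(t_n^-)=0$ gives $W = u'_-(t_n^-)$, as asserted.

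For step (iii), set $g(x) := \int_{t_0}^{t_n} G^E_n(x,y) f(y)\,dy$ for $f\in L^2(t_0,t_n)$, i.e.,
\[
g(x) = \frac{u_-(x)}{W}\int_x^{t_n} u_+(y)f(y)\,dy + \frac{u_+(x)}{W}\int_{t_0}^x u_-(y)f(y)\,dy.
\]
Splitting the integral at $y=x$ and differentiating twice, the $y=x$ boundary contributions cancel in $g'(x)$ and recombine through $W$ in $g''(x)$ to yield $-g''(x) - Eg(x) = f(x)$ on each open subinterval $(t_{j-1},t_j)$, using $-u''_\pm = E u_\pm$. The endpoint conditions $g(t_0^+)=0$ and $g'(t_n^-)=0$ follow immediately from $u_-(t_0^+)=0$ and $u'_+(t_n^-)=0$. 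The vertex conditions \eqref{vcon} for $g$ at each interior $t_j$ are inherited from those for $u_\pm$: since $\int_{t_0}^x u_- f\,dy$ and $\int_x^{t_n} u_+ f\,dy$ are continuous in $x$ at $t_j$, the matching of $g(t_j^\pm)$ and $g'(t_j^\pm)$ reduces to that of $u_\pm(t_j^\pm)$ and $u'_\pm(t_j^\pm)$, which is precisely \eqref{vcon}. Thus $g\in\dom(H^n(b,\ell,q))$ and $(H^n(b,\ell,q)-E)g = f$.

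The main technical subtlety is the invariance of $W$ across interior vertices: it depends on the specific algebraic form of \eqref{vcon}, in which the same factor $\sqrt{b_j}$ appears in both equations and the $q_j$ term enters only through the derivative matching, producing the cancellation. Once this Wronskian identity is in hand, the remaining verifications proceed along the standard one-dimensional variation-of-parameters template.
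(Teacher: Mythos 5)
Your proposal is correct and follows essentially the same variation-of-parameters route as the paper, which states the construction tersely and delegates the details to \cite[Lemma~D.12]{HiPo}. You supply precisely the pieces the paper leaves implicit --- in particular the invariance of the Wronskian across the interior vertex conditions \eqref{vcon} (where the $\sqrt{b_j}$ factors and $q_j$ terms cancel) and the nonvanishing of $W$ via the spectral hypothesis --- and these computations check out.
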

\begin{proof}
	For a fixed $g\in L^2(t_0,t_{n})$ the unique nonzero function $u$ satisfying all vertex conditions and solving the non-homogeneous differential equation $-u''-Eu=g$ is given by
	\begin{equation}
	u(y)= [\cR_Eg](y):=\int_{t_0}^{t_{n}} G^E_{[t_0,t_{n}]}(x,y)g(x)dx.
	\end{equation}
	Evidently, the operator $R_E$ is bounded and
	\begin{equation}
	(H^n(b,\ell,q)-E) \cR_E=\cR_E(H^n(b,\ell,q)-E) =I_{L^2(t_0,t_{n})},
	\end{equation}
	as asserted. Finally, evaluating the Wronskian at $t_{0}$ and $t_{n+1}$, we get
	\begin{equation}
		W(u_+, u_-)=u'_-(t_{n}^-)=u_+(t_0^+)
	\end{equation}
	(see also \cite[Lemma~D.12]{HiPo}).
\end{proof}

\section{Proof of Localization for Half-Line Random Operators} \lb{sec:half-line}

The main goal of this section is to prove dynamical and spectral localization for the random half-line operators $H_{\omega}$ arising in the orthogonal decomposition of $\bbH_{\omega}$. Theorem~\ref{thm34} ensures positivity of the Lyapunov exponent outside of a discrete set $\mathfrak{D}$. In Theorem~\ref{main1} we prove spectral localization and SULE for $H_{\omega}$. Finally, we conclude with the proof of Theorem~\ref{thm48}, which addresses dynamical localization.

\subsection{Description of Random Models}\lb{sec3.1}

The	\textit{random branching model} (abbreviated RBM) is described by a family of  Laplace operators subject to Neumann--Kirchhoff vertex conditions on radial metric trees with random branching numbers. In other words,  we assume Hypothesis~\ref{na5} with the following parameters
	\begin{equation}\lb{nb1}
	b = \set{b_{\omega}(n)}_{n\in\bbN}\subset \set{2,...,d},\ d\geq3,\  \quad \ell^-=\ell^+=1,\quad q^-=q^+=0,
	\end{equation}
	where $\set{b_{\omega}(n)}_{n\in\bbN}$ is a sequence of independent and identically distributed random variables whose common distribution contains at least two points in its support.
	
	The {\it random lengths model} (RLM) is given by a family of the Neumann--Kirchhoff Laplace operators on radial metric trees with random edge lengths. That is,  we assume Hypothesis~\ref{na5} with
	\begin{equation}\lb{nb3}
	b^-=b^+=d,\quad \ell = \set{\ell_{\omega}(n)}_{n\in\bbN}\subset[\ell^-,\ell^+],\quad  q^-=q^+=0,
	\end{equation}
	where $\set{\ell_{\omega}(n)}_{\in\bbN}$ is a sequence of independent and identically distributed random variables whose common distribution contains at least two points in its support.
	
	The {\it random Kirchhoff model} (RKM) is given by the Laplace operators subject to random $\delta$-type vertex conditions. That is, we assume Hypothesis~\ref{na5} with
	\begin{equation}\lb{nb2}
	b^-=b^+=d,\quad \ell^-=\ell^+=1, \quad q=\set{q_{\omega}(n)}_{n\in\bbN} \subset[q^-,q^+],
	\end{equation}
	where $\set{q_{\omega}(n)}_{n\in\bbN}$ is a sequence of independent and identically distributed random variables whose common distribution contains at least two points in its support.
	
	In order to unify these models we consider three-dimensional random variables with common  distribution $\wti\mu$.
	\begin{hypothesis}\lb{hyp2}
		Let $\widetilde \mu$ be a probability measure  with
		\[
		\supp(\wti \mu)\subset \mathcal{A}:
		=
		\set{b^-,\ldots,b^+} \times[\ell^-,\ell^+] \times[q^-,q^+].
		\]
		Suppose that $\supp(\wti \mu)$ contains at least two distinct points, and let $(\Omega, \mu) := (\mathcal{A}^{\bbN}, \wti\mu^{\bbN})$.
	\end{hypothesis}
	%\begin{equation}\lb{34new}
	%H_{\omega}:= H(b_{\omega}, \ell_{\omega}, q_{\omega}),\ b_\omega(n):=\omega_1(n), \ell_\omega(n):= \omega_2(n), %q_\omega(n):= \omega_3(n).
	%\end{equation}
	\begin{remark}We notice that
		\begin{itemize}
			\item RBM arises when $\supp\widetilde\mu \subseteq \set{b^-,\ldots,b^+} \times \set{1} \times \set{0}$,
			\item RLM arises when $\supp\widetilde\mu \subseteq \set{d} \times [\ell^-,\ell^+] \times \set{0}$,
			\item RKM arises when $\supp\widetilde\mu \subseteq \set{d} \times \set{1} \times [q^-,q^+]$.
		\end{itemize}
		
	\end{remark}
	
	For $\omega \in \Omega$ we denote the components of $\omega$ as $\omega(n) = (b_\omega(n),\ell_\omega(n), q_\omega(n))$, since we will use them to define the branching, edge lengths, and Kirchhof potential of an operator. In particular, the vertices in $\R_+$ are denoted $t_\omega(n)$. Given $\omega$, define the operators $\bbH_\omega=\bbH({b_\omega,\ell_\omega,q_\omega})$ acting in  $L^2(\Gamma_{b_{\omega}, \ell_{\omega}})$ as in \eqref{det1}. Similarly, for $j\in\bbZ_+$, define
	\begin{equation}\lb{ranops}
H_{T^j\omega}:= H(T^jb_{\omega}, T^j\ell_{\omega}, T^jq_{\omega})\text{\ acting in\ } L^2(t_{\omega}(j), \infty),
	\end{equation}
	as in \eqref{28op}, \eqref{29new} and let $\mathfrak h_{T^{j}\omega}$ denote the corresponding quadratic forms.

\subsection{Positivity of Lyapunov Exponents via F\"urstenberg's Theorem}

Inspired by \eqref{eq:halfLineTMdiscrete} and \eqref{227n}, we introduce an $\SL(2,\bbR)$-cocycle over $T$ (the left shift $\Omega \to \Omega$) as follows. First, let $\cA$, $b^\pm$, $\ell^\pm$, and $q^\pm$ be as in Hypothesis~\ref{hyp2}. For each $E \in \R$, \eqref{eq:halfLineTMdiscrete}--\eqref{227n} lead us to define $M^E : \cA \to \SL(2,\R)$ by
\begin{equation} \label{eq:MEdef}
\mathcal{A} \ni \alpha = (\beta,\lambda,\varkappa)\mapsto
M^E(\alpha)
=
D(\beta) S(\varkappa) R_{\sqrt{E}}(\lambda \sqrt{E}).
\end{equation}
This induces a map $M^E: \Omega \to \SL(2,\R)$ via $M^E(\omega) = M^E(\omega(1))$, and then a skew product
	\begin{equation}\no
	(T,M^E): \Omega\times\bbR^2\rightarrow  \Omega\times\bbR^2,\ (T,M^{ E})(\omega, v)=(T\omega, M^E(\omega)v).
	\end{equation}
	Then denoting  the $n$-step transfer matrix by
	\begin{equation}\lb{tm}
	M^{E}_n(\omega)=\prod_{r=n-1}^0M^{E}(T^r\omega)
	=
	M^E(T^{n-1}\omega)\cdots  M^E(T\omega)  M^E(\omega),\ n\in\bbN,\
	\end{equation}
	we note that the iterates over the skew product are given by $(T,M^{E})^n=(T^n, M^{E}_n)$.
	The Lyapunov exponent is defined by
	\begin{equation}\lb{LE1}
	L(E):=\lim\limits_{n\rightarrow\infty}\frac{1}{n}\int_{\Omega}\log \|M_n^{E}(\omega)\| \, d\mu(\omega).
	\end{equation}
	By Kingman's Subadditive Ergodic Theorem we have
	\begin{equation}\lb{LE2}
	L(E)=\lim\limits_{n\rightarrow\infty}F_n(\omega,E);\ F_n(\omega,E):=\frac1n\log\|M^E_n(\omega)\|,
	\end{equation}
	for $\mu$-almost every $\omega$.

\begin{remark}\label{r.2cocycles}
Let us note that there are two natural cocycles that one can work with here. In addition to the discrete cocycle just described, there is also the continuum cocycle $\widetilde{M}^E$ defined by
\[
\widetilde{M}^E_x(\omega) : \begin{bmatrix}
u(0^+) \\ u'(0^+)
\end{bmatrix}
\mapsto
\begin{bmatrix}
u(x^+) \\ u'(x^+)
\end{bmatrix}
\]
whenever $-u'' = Eu$ and $u$ satisfies the vertex conditions defining $\dom(H_\omega)$. Evidently,
\[
M^E_n(\omega)
=
\wti{M}^E_{t_\omega(n)}(\omega).
\]
This leads to a simple relationship between the Lyapunov exponents of $M^E$ and $\wti{M}^E$. By Birkhoff's Ergodic Theorem,
\[
\lim_{n\to\infty} \frac{1}{n} t_\omega(n)
=
\langle \ell \rangle  :=
\int_{\cA} \alpha_2 \, d\wti{\mu}(\alpha),
\]
the average length. Then, one has
\begin{equation}\label{e.2cocycles}
L(E) = \wti{L}(E) \cdot \langle \ell \rangle.
\end{equation}
\end{remark}

	Our next goal is to show that Lyapunov exponents are positive away from a discrete set of energies. To that end, we first recall  F\"urstenberg's Theorem and some related facts. In order to state F\"urstenberg's Theorem, let us recall that a few definitions from the general theory. A group $G \subseteq \SL(2,\bbR)$ is called \emph{strongly irreducible} if there does not exist a finite set $\Lambda \subseteq \bbR\bbP^1$ such that $\{gv :v \in \Lambda\} = \Lambda$ for all $g \in G$; $G$ is called \emph{contracting} if there exist $g_n \in G$, $n \geq 1 $ such that $\|g_n\|^{-1} g_n$ converges to a rank-one operator as $n \to \infty$. Given  Borel probability measures $\nu_k$ supported in $\SL(2,\R)$, $k \geq 1$, we say $\nu_k \to \nu$ \emph{weakly and boundedly} if
\begin{equation} \label{eq:boundedconv}
\int_{\|M\| \geq N} \! \log^+\|M\| \, d\nu_k(M)
+
\int_{\|M\| \geq N} \! \log^+\|M\| \, d\nu(M)
\to 0
\end{equation} as $N \to \infty$, uniformly in $k$ and
\[
\int \! f \, d\nu_k\to \int \! f \, d\nu
\]
for all the space of continuous complex-valued functions $f:\SL(2,\R) \to \bbC$ having compact support.
	\begin{theorem}\lb{a16}
		Let $\nu$ be a probability measure on $\SL(2,\bbR)$ satisfying
		\begin{equation}\no
		\int \log\|M\| \, d\nu(M) < \infty.
		\end{equation}
		Let $G_{\nu}$ be the smallest closed subgroup of $\SL(2,\bbR)$ that contains $\supp \nu$.
		\begin{enumerate}
		\item[{\rm(i)}] \cite[Theorem~8.6]{F63} Assume that $G_{\nu}$ is not compact and that it is strongly irreducible. Then the Lyapunov exponent $L(\nu)$ associated with $\nu$ is positive.
		
		\item[{\rm(ii)}] \cite[Theorem~B]{FuKi} Assume that the set
		\begin{equation}\no
		\emph{Fix}(G_{\nu}):=\set{V\in\bbR\bbP^1: MV=V\text{\ for every\ } M\in G_{\nu}}
		\end{equation}
		contains at most one element. If $\nu_k\rightarrow\nu$ weakly and boundedly, then $L(\nu_k)\rightarrow L(\nu)$ as $k\rightarrow\infty$.
		\end{enumerate}
	\end{theorem}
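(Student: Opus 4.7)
The plan is to apply F\"urstenberg's calculus of random matrix products via stationary measures on the projective line $\bbR\bbP^1$. The cornerstone is the integral formula: for any $\nu$-stationary Borel probability measure $\eta$ on $\bbR\bbP^1$ (i.e., $\eta=\int (M_*\eta)\,d\nu(M)$), one has
\begin{equation}
L(\nu)=\int_{\SL(2,\bbR)}\int_{\bbR\bbP^1}\log\frac{\|Mv\|}{\|v\|}\,d\eta(\overline{v})\,d\nu(M),
\end{equation}
which follows from a Birkhoff-type ergodic argument applied to the skew product on $\SL(2,\bbR)^{\bbN}\times\bbR\bbP^1$; integrability of $\log\|M\|$ under $\nu$ ensures the integrand lies in $L^1$. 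At least one such $\eta$ exists by Krylov--Bogolyubov applied to the Markov operator $(P_\nu f)(\overline v)=\int f(M\overline v)\,d\nu(M)$, since $\bbR\bbP^1$ is compact.

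For part (i), strong irreducibility of $G_{\nu}$ forces every $\nu$-stationary measure $\eta$ to be atomless: if $\overline{v}_0$ carried the maximal atomic mass $m>0$, then $\{\overline{v}:\eta(\{\overline{v}\})=m\}$ would be a finite nonempty $G_{\nu}$-invariant subset of $\bbR\bbP^1$, contradicting strong irreducibility. Noncompactness of $G_{\nu}$ yields a sequence $g_n\in G_{\nu}$ with $\|g_n\|\to\infty$; along a subsequence $g_n/\|g_n\|$ converges to a rank-one operator, furnishing the proximal (contracting) behavior of $G_{\nu}$. Substituting an atomless $\eta$ together with this contracting action into the F\"urstenberg formula yields a strictly positive integrand in expectation, and hence $L(\nu)>0$.

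For part (ii), given $\nu_k$-stationary $\eta_k$, pass to a weak subsequential limit $\eta_k\to\eta^*$ using compactness of the space of probability measures on $\bbR\bbP^1$. A standard approximation argument combined with the weak-and-bounded convergence $\nu_k\to\nu$ shows $\eta^*$ is $\nu$-stationary. If $\eta^*$ is atomless, \eqref{eq:boundedconv} together with weak convergence of $\eta_k$ justifies the interchange of limit and integral in the F\"urstenberg formula, so $L(\nu_k)\to L(\nu)$. If instead $\eta^*$ carries a heaviest atom, the same $G_{\nu}$-invariance argument as in (i) shows that its support lies in $\text{Fix}(G_{\nu})$; the hypothesis $\#\,\text{Fix}(G_{\nu})\leq 1$ then forces $\eta^*=\delta_{\overline{v}_*}$ for the unique fixed direction, whence $Mv_*=\pm v_*$ for every $M\in G_{\nu}$ gives $L(\nu)=0$, and a parallel analysis of $\eta_k,\nu_k$ produces $L(\nu_k)\to 0$.

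The main obstacle will be the interchange of limit and integral in part (ii). Since the integrand $\log(\|Mv\|/\|v\|)$ is unbounded on $\SL(2,\bbR)$, the uniform log-tail control in \eqref{eq:boundedconv} must be used essentially in order to legitimize passage to the limit; moreover, the dichotomy between atomless and atomic cluster points $\eta^*$ is delicate, and it is precisely at this step that the hypothesis on $\text{Fix}(G_{\nu})$ is invoked to exclude pathological accumulation of projective mass near common fixed directions of elements of $\supp\nu_k$.
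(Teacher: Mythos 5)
The first thing to observe is that the paper does not prove this theorem at all: both parts are quoted from the literature, (i) from F\"urstenberg \cite{F63} and (ii) from F\"urstenberg--Kifer \cite{FuKi}, so the ``proof'' in the paper is the citation itself. Judged on its own merits, your sketch of part (i) assembles the right objects --- stationary measures on $\bbR\bbP^1$, the F\"urstenberg integral formula, atomlessness of stationary measures under strong irreducibility, and a contracting sequence coming from noncompactness --- but it omits the one step that constitutes the theorem. The integrand $\log\bigl(\|Mv\|/\|v\|\bigr)$ is not pointwise nonnegative, so the sentence ``substituting an atomless $\eta$ together with this contracting action into the F\"urstenberg formula yields a strictly positive integrand in expectation'' is the conclusion restated, not an argument for it. The standard proof is by contradiction: if $L(\nu)=0$, one shows that the measure-valued martingale $(M_1\cdots M_n)_*\eta$ is almost surely constant, hence $\eta$ is invariant under every element of $G_\nu$; a noncompact closed subgroup of $\SL(2,\bbR)$ can only preserve a probability measure on $\bbR\bbP^1$ carried by at most two points, which contradicts atomlessness. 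None of that machinery appears in your write-up.

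Part (ii) contains two outright errors. First, the maximal-atom argument shows only that the set of atoms of largest mass is a finite set invariant under $G_\nu$ \emph{as a set}; the group may permute its elements, so this set need not be contained in $\mathrm{Fix}(G_\nu)$, and the hypothesis $\#\,\mathrm{Fix}(G_\nu)\le 1$ therefore does not force $\eta^*=\delta_{\overline{v}_*}$. Second, even granting a common fixed direction, the conclusion $L(\nu)=0$ is false: a measure supported on upper-triangular matrices fixes $\mathrm{span}(e_1)$ and generically has positive Lyapunov exponent, so the final reduction of your dichotomy collapses. The actual F\"urstenberg--Kifer argument is a two-sided comparison: $L$ is upper semicontinuous under weak-and-bounded convergence because it is an infimum of the continuous functionals $\nu\mapsto\frac1n\int\log\|M_n\cdots M_1\|$, while $\liminf_k L(\nu_k)$ is bounded below by the minimum of $\int\!\!\int \log(\|Mv\|/\|v\|)\,d\eta\,d\nu$ over all $\nu$-stationary $\eta$, and the fixed-direction hypothesis is used precisely to identify that minimum with $L(\nu)$. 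Your proposal recovers neither half of this sandwich. Since these are classical results, the right course in this paper is simply to cite them, as the authors do.
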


In the present setting, we have a one-parameter family of measures induced on $\SL(2,\bbR)$, namely, we consider $\nu_E$, the pushforward of $\widetilde\mu$ under the map $M^E$ in \eqref{eq:MEdef}.
	\begin{theorem}\lb{thm34}
		Assume Hypothesis~\ref{hyp2}. Then there is a discrete set $\mathfrak{D} \subseteq \bbR$ such that $G = G_{\nu(E)}$ enjoys the following properties for $E \in \bbR \setminus \gD$.
		\begin{enumerate}
		\item[{\rm(i)}] $G$ is noncompact
		\item[{\rm(ii)}] $G$ is strongly irreducible
		\item[{\rm(iii)}] $G$ is contracting
		\item[{\rm(iv)}] $\mathrm{Fix}(G) = \emptyset$
		\end{enumerate}
In particular, $L$ is continuous and positive on $\bbR \setminus \mathfrak{D}$.
	\end{theorem}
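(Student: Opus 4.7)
The plan is to verify the four group-theoretic properties (i)--(iv) at all but a discrete set of energies $E$ and then invoke Theorem~\ref{a16}. The underlying analytic engine is that each entry of $M^E(\alpha)$ is entire in $E$: the functions $\cos(\lambda\sqrt{E})$, $\sqrt{E}\sin(\lambda\sqrt{E})$, and $\sin(\lambda\sqrt{E})/\sqrt{E}$ all admit entire extensions in $E$, so the same is true of every word in $M^E(\alpha_1), M^E(\alpha_2), \ldots$ for fixed $\alpha_j \in \supp\wti\mu$. By Hypothesis~\ref{hyp2}, I fix two distinct points $\alpha_1, \alpha_2 \in \supp\wti\mu$, and then $M_1^E := M^E(\alpha_1)$ and $M_2^E := M^E(\alpha_2)$ both belong to $G = G_{\nu_E}$.

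For noncompactness (i), I would show that the trace of some fixed word $w(M_1^E, M_2^E)$ is not identically bounded by $2$ in absolute value. Since such a trace is an entire scalar function of $E$, the locus $\{|\tr w| \le 2\}$ is then discrete; outside it, $w$ is a hyperbolic element of $G$ and generates an unbounded cyclic subgroup. A direct computation in the regime $E \to -\infty$, where $R_{\sqrt{E}}(\lambda\sqrt{E})$ degenerates into a matrix with $\cosh$-type entries, shows that even $w = M_1^E$ does the job. For strong irreducibility (ii) and $\mathrm{Fix}(G) = \emptyset$ (iv), I would argue that each obstruction---the existence of a common $G$-fixed direction in $\bbR\bbP^1$, or of a common $G$-invariant pair of directions---translates into the vanishing of a nontrivial entire scalar function of $E$: a common eigendirection of $M_1^E$ and $M_2^E$ is equivalent to $\tr([M_1^E, M_2^E]) = 2$, and a common invariant pair reduces, on an index-two subgroup, to the same condition. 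I would then exhibit a single test energy at which this relation fails, using that $\alpha_1 \ne \alpha_2$, to conclude that the exceptional set is discrete. Property (iii) is then automatic: any closed noncompact strongly irreducible subgroup of $\SL(2,\bbR)$ must contain a proximal (hyperbolic) element and is therefore contracting.

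Once (i)--(iv) are established, Theorem~\ref{a16}(i) gives $L(E) > 0$ on $\bbR \setminus \mathfrak{D}$. For continuity, note that $E \mapsto M^E(\alpha)$ is continuous and its image, for $\alpha \in \supp\wti\mu$ and $E$ in any compact set, lies in a uniformly bounded subset of $\SL(2,\bbR)$. Hence, as $E_k \to E$ in $\bbR \setminus \mathfrak{D}$, the push-forwards $\nu_{E_k}$ converge to $\nu_E$ weakly and boundedly in the sense of \eqref{eq:boundedconv}. Combined with $\mathrm{Fix}(G) = \emptyset$, Theorem~\ref{a16}(ii) yields continuity of $L$ on $\bbR \setminus \mathfrak{D}$.

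The hardest step will be ruling out, at a single explicit test energy, the existence of a common finite $G$-invariant subset of $\bbR\bbP^1$. The only structural hypothesis available is $\alpha_1 \ne \alpha_2$, so the argument must exploit the role each of the three coordinates ($\beta$, $\lambda$, $\varkappa$) plays in the factorization $M^E(\alpha) = D(\beta) S(\varkappa) R_{\sqrt{E}}(\lambda\sqrt{E})$ to build an explicit $E$ witnessing the failure of a common invariant flag, uniformly across the cases (RBM, RLM, RKM) and their superpositions.
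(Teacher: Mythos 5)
Your overall strategy is the same as the paper's: fix two distinct points of $\supp\wti\mu$, encode each obstruction to (i)--(iv) as the vanishing of an entire function of $E$, show that function is not identically zero, and feed the result into Theorem~\ref{a16}. But there are two genuine gaps. The first is that the entire mathematical content of the theorem is the verification, for an \emph{arbitrary} pair $(b_1,\ell_1,q_1)\neq(b_2,\ell_2,q_2)$ in $\cA$, that the relevant entire function does not vanish identically --- and you explicitly defer this (``the hardest step'') rather than carry it out. This is where all the work lies. The paper takes $g(E)=M_1M_2-M_2M_1$, writes its $(1,1)$ entry as an exponential polynomial in $w=\sqrt{E}$, and uses the linear-independence lemma for sums $\sum_j p_j(z)e^{a_jz}$ to show that $g\equiv 0$ forces successively $b_1=b_2$ and $q_1=q_2$, then $q_1=q_2=0$ with $\ell_1\neq\ell_2$; in that residual case $g$ is computed in closed form \eqref{eq:RLMg} and vanishes only on the discrete set $\set{(\ell_1-\ell_2)^{-2}\pi^2k^2}$. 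The case analysis is unavoidable because the three coordinates can degenerate in different combinations, and the RLM-type case (equal $b$'s and $q$'s, distinct lengths) is precisely the one a generic ``pick a test energy'' heuristic is most likely to miss; without this computation the proof is incomplete.

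The second gap is in your argument for noncompactness: you assert that since $\tr w(E)$ is a non-constant entire function, the set $\set{E:|\tr w(E)|\le 2}$ is discrete. That is false --- for a single transfer matrix this set is the spectrum of the associated periodic operator, a union of bands with nonempty interior; only level sets $\set{f=c}$ of a non-constant entire $f$ are discrete. The correct route (implicit in \cite[Theorem~2.1]{BuDaFi2}, which the paper invokes) is that every compact subgroup of $\SL(2,\bbR)$ is conjugate into $\mathrm{SO}(2)$ and hence abelian, so $g(E)\neq 0$ already forces $G_{\nu(E)}$ to be noncompact; this again reduces everything to the non-vanishing of the commutator. Your remaining steps are sound: in $\SL(2,\bbR)$ contraction does follow from noncompactness alone (normalize a sequence of unbounded norms and pass to a rank-one limit), the reduction of (ii) and (iv) to trace identities for the group commutator is a legitimate alternative to the paper's citation of \cite{BuDaFi2} (and is essentially what the paper does by hand in the discrete case, Theorem~\ref{thm:discreteFurst}), and the continuity argument via weak-and-bounded convergence of $\nu_{E_k}$ together with Theorem~\ref{a16}(ii) matches the paper.
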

	
	\begin{proof}
	In view of Theorem~\ref{a16}, positivity follows from (i) and (ii), while continuity on $\bbR\setminus\gD$ follows from (iv). Moreover, (ii)$\implies$(iv), so we only need to prove (i)--(iii). Write
		\begin{align*}
		&M^E(\beta,\lambda,\varkappa)
		=
		D(\beta) S(\varkappa) R_{\sqrt{E}}(\lambda\sqrt{E}) \\
		& =
		\begin{bmatrix} \sqrt{\beta} & 0 \\ 0 & \frac{1}{\sqrt{\beta}}\end{bmatrix}
		\begin{bmatrix} 1 & 0 \\ \varkappa & 1 \end{bmatrix}
		\begin{bmatrix} \cos(\lambda\sqrt{E}) & \frac{\sin(\lambda\sqrt{E})}{\sqrt{E}} \\ - \sqrt{E}\sin(\lambda\sqrt{E}) & \cos(\lambda\sqrt{E}) \end{bmatrix} \\
		%& =
		%\begin{bmatrix} \beta^{1/2} & 0 \\ \varkappa\beta^{-1/2} & \beta^{-1/2} \end{bmatrix}
		%\begin{bmatrix} \cos(\lambda\sqrt{E}) & \frac{\sin(\lambda\sqrt{E})}{\sqrt{E}} \\ - \sqrt{E}\sin(\lambda\sqrt{E}) & %\cos(\lambda\sqrt{E}) \end{bmatrix} \\
		& =
		\small{\begin{bmatrix}
			\beta^{1/2}\cos(\lambda\sqrt{E}) & \beta^{1/2}\frac{\sin(\lambda\sqrt{E})}{\sqrt{E}} \\
			\varkappa\beta^{-1/2} \cos(\lambda\sqrt{E}) - \beta^{-1/2}\sqrt{E}\sin(\lambda\sqrt{E}) &  \frac{\varkappa\sin(\lambda \sqrt{E})}{\beta^{1/2}\sqrt{E}} +  \beta^{-1/2}\cos(\lambda\sqrt{E})
			\end{bmatrix}}
		\end{align*}
		Now, let $(b_1,\ell_1,q_1) \neq (b_2,\ell_2,q_2)$ be distinct elements of $\supp\widetilde\mu$, abbreviate
		\[
		M_j = M_j(E): = M^E(b_j,\ell_j,q_j),
		\]
		and define the commutator
		\begin{equation}
		g= g(E) = [M_1,M_2] = M_1M_2 - M_2 M_1.\lb{ge}
		\end{equation}
		
To conclude the proof, it suffices to show that $g(E)$ does not vanish identically. Concretely, it is easy to see that the matrices $M_j$ are analytic functions of $E$ with non-constant trace and that the entries of $M_j$ are real  whenever $\tr M_j \in [-2,2]$. Thus, the matrices $M_j(E)$  satisfy the first three hypotheses of \cite[Theorem~2.1]{BuDaFi2}, so, if $g(E)$ does not vanish identically, we can conclude that there is a discrete set $\gD$ such that (i)--(iii) hold for $E \in \bbR \setminus \gD$ by \cite[Theorem~2.1]{BuDaFi2}.
		\medskip
		
		To that end, suppose for the purpose of establishing a contradiction that $g$ vanishes identically. In particular, the upper left matrix element $g_{11}(E)$ vanishes identically. One may calculate  $g_{11}(E)$ directly:
		\begin{align*}
		g_{11}(E)
		& =
		b_1^{1/2} \frac{\sin(\ell_1\sqrt{E})}{\sqrt{E}} \left( q_2b_2^{-1/2} \cos(\ell_2\sqrt{E}) - b_2^{-1/2}\sqrt{E} \sin(\ell_2\sqrt{E})  \right) \\
		& \qquad \quad - b_2^{1/2} \frac{\sin(\ell_2\sqrt{E})}{\sqrt{E}} \left( q_1b_1^{-1/2} \cos(\ell_1\sqrt{E}) - b_1^{-1/2}\sqrt{E} \sin(\ell_1\sqrt{E})  \right).
		\end{align*}
		For ease of notation, write $r_1 = b_2^{1/2} / b_1^{1/2}$, $r_2 = b_1^{1/2} / b_2^{1/2}$, and $w = \sqrt{E}$. Expanding the trigonometric functions, we get
		\begin{align}
		g_{11}
		& =
		\frac{q_2 r_2}{4iw} (e^{i\ell_2 w} + e^{-i\ell_2 w})(e^{i\ell_1 w}- e^{-i\ell_1 w}) \\
		&\quad - \frac{q_1 r_1}{4iw} (e^{i\ell_1 w} + e^{-i\ell_1 w})(e^{i\ell_2 w} - e^{-i\ell_2 w}) \\
		& \quad - \frac{r_1-r_2}{4}(e^{i\ell_1 w} - e^{-i\ell_1 w})(e^{i\ell_2 w} - e^{-i\ell_2 w}).
		\end{align}
		Thus,
		\begin{equation}
		\label{eq:g11:expExpansion}
		\begin{split}
		4iw^2 g_{11} &=
		\left( q_2 r_2 w - q_1 r_1 w - iw^2(r_1-r_2) \right) e^{i(\ell_1+\ell_2)w}\\
		&\quad+ \left( q_1 r_1 w - q_2 r_2 w - iw^2(r_1-r_2) \right) e^{-i(\ell_1+\ell_2)w} \\
		&\quad  + \left( q_2 r_2 w + q_1 r_1 w + iw^2(r_1-r_2) \right) e^{i(\ell_1-\ell_2)w} \\
		&\quad + \left( -q_1 r_1 w - q_2 r_2 w + iw^2(r_1-r_2)\right) e^{-i(\ell_1-\ell_2)w}.
		\end{split}
		\end{equation}
Since $g_{11}$ vanishes identically and $\ell_1,\ell_2>0$, this forces
		\begin{align}
		q_2 r_2 w - q_1 r_1 w - iw^2(r_1-r_2) & \equiv 0 \\
		q_1 r_1 w - q_2 r_2 w - iw^2(r_1-r_2) & \equiv 0
\end{align}
		It is easy to see that this yields $r_1=r_2$ (hence $b_1 = b_2$) and $q_1 = q_2$. Since $(b_1,\ell_1,q_1) \neq (b_2,\ell_2,q_2)$, we must have $\ell_1 \neq \ell_2$. Going back to \eqref{eq:g11:expExpansion}, this implies	
		\begin{align*}
		q_2 r_2 w + q_1 r_1 w + iw^2(r_1-r_2) & \equiv 0 \\
		-q_1 r_1 w - q_2 r_2 w + iw^2(r_1-r_2) & \equiv 0.
		\end{align*}
and hence $q_1 = q_2 = 0$. Writing $b_1 = b_2 =: b$, and substituting $q_1=q_2=0$, we may directly calculate $g$:
		\begin{equation} \label{eq:RLMg}
		g(E)
		=
		\begin{bmatrix}
		0 & \frac{b-1}{\sqrt{E}} \sin\left((\ell_2-\ell_1)\sqrt{E}\right) \\
		\frac{b-1}{b}\sqrt{E} \sin\left((\ell_2-\ell_1)\sqrt{E}\right) & 0\end{bmatrix}
		\end{equation}
		%and hence
		%\[
		%\det(g(E))
		%=
		%-\frac{(b-1)^2}{b} \sin^2\left((\ell_1-\ell_2)\sqrt{E}\right),
		%\]
		which clearly only vanishes on the discrete set
		\begin{equation}
\mathfrak{D} = \set{(\ell_1-\ell_2)^{-2} \pi^2 k^2 : k \in \Z_+},
		\end{equation}
		a contradiction.

	\end{proof}
	The proof above implicitly uses the following statement.
	\begin{lemma}
		Suppose $\set{a_j : j=0,\ldots,n}$ is a set of $n+1$ distinct complex numbers and $\set{p_j : j=0,\ldots,n}$ are polynomials in $z$. Then, the function
		\[
		Q(z)
		:=
		\sum_{j=0}^n p_j(z) e^{a_j z}
		\]
		vanishes identically if and only if $p_j \equiv 0$ for each $j$.
	\end{lemma}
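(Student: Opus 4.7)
The ``if'' direction is immediate. For the ``only if'' direction, I will proceed by induction on $n$, the number of distinct exponents appearing in the sum.

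The base case $n=0$ is trivial, since $e^{a_0 z}$ never vanishes, so $p_0(z) e^{a_0 z} \equiv 0$ forces $p_0 \equiv 0$. For the inductive step, assume the statement holds whenever there are $n$ distinct exponents, and suppose $Q(z) = \sum_{j=0}^{n} p_j(z) e^{a_j z} \equiv 0$ with $a_0,\ldots,a_n$ pairwise distinct. Multiplying by $e^{-a_n z}$ gives
\[
p_n(z) + \sum_{j=0}^{n-1} p_j(z) e^{d_j z} \equiv 0,
\qquad d_j := a_j - a_n,
\]
where the $d_j$ are nonzero and pairwise distinct. The strategy is to differentiate enough times to annihilate the polynomial $p_n$ while preserving the nonvanishing of the remaining coefficient polynomials, so that the inductive hypothesis applies.

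Concretely, let $m := \deg p_n + 1$ and apply $d^m/dz^m$ to the displayed identity. The term $p_n(z)$ is killed. For each $j < n$, Leibniz' rule gives
\[
\frac{d^m}{dz^m}\bigl(p_j(z) e^{d_j z}\bigr)
= \widetilde{p}_j(z)\,e^{d_j z},
\qquad
\widetilde{p}_j(z) := \sum_{i=0}^{m} \binom{m}{i} d_j^{\,m-i} p_j^{(i)}(z).
\]
Since $d_j \neq 0$, the leading coefficient of $\widetilde p_j$ equals $d_j^m$ times the leading coefficient of $p_j$, so $\widetilde p_j \equiv 0$ if and only if $p_j \equiv 0$. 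We thus obtain
\[
\sum_{j=0}^{n-1} \widetilde{p}_j(z) e^{d_j z} \equiv 0
\]
with $n$ distinct exponents $d_0,\ldots,d_{n-1}$. The inductive hypothesis yields $\widetilde p_j \equiv 0$, hence $p_j \equiv 0$, for $j = 0,\ldots,n-1$. Plugging back into $Q \equiv 0$, we conclude $p_n(z) e^{a_n z} \equiv 0$, so $p_n \equiv 0$, completing the induction.

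The only delicate point is ensuring that differentiating $m$ times does not accidentally annihilate any $\widetilde p_j$; this is resolved by the observation that differentiation acts on $p_j(z) e^{d_j z}$ by a first-order differential operator whose ``symbol'' is $d_j \neq 0$, so the leading coefficient of the polynomial factor is preserved up to a nonzero multiplicative constant. Everything else is routine bookkeeping.
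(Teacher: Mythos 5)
Your proof is correct, and it is essentially the same idea as the paper's: use the fact that $\{a_j\}$ are distinct to build a differential operator that kills some of the terms while preserving the nonvanishing of another. The paper phrases this non‑inductively, assuming WLOG $p_0 \not\equiv 0$ and applying the single operator $\prod_{j=1}^n (D-a_j)^{M+1}$ (with $M = \max \deg p_j$), which annihilates all terms $j\geq 1$ but keeps $p_0(z)e^{a_0 z}$ nonzero because $(D-b)$ preserves the degree of the polynomial factor whenever $b \neq a_0$. Your version first conjugates by $e^{-a_n z}$ and applies $D^m$ — i.e., effectively applies $(D - a_n)^m$ — and then recurses; the Leibniz computation of $\widetilde p_j$ and the leading-coefficient argument are exactly the same observation spelled out in coordinates. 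Both are fine. One very small edge case worth flagging: your choice $m = \deg p_n + 1$ implicitly assumes $p_n \not\equiv 0$; if $p_n \equiv 0$ you should either note that the inductive hypothesis applies immediately, or adopt the convention $\deg 0 = -1$ so that $m = 0$ and the argument degenerates gracefully. This is a one‑line fix and does not affect correctness.
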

	
	\begin{proof}
		Write $D = d/dz$ and $M=\max(\deg(p_j))$. Suppose on the contrary that
		\[
		p_0(z) e^{a_0 z}
		\equiv
		\sum_{j=1}^n p_j(z) e^{a_j z}
		\]
		with $p_0 \not\equiv 0$. Notice that $\prod_{j=1}^n (D-a_j)^{M+1}$ annihilates the right hand side. However, if $b \neq a_0$, one readily verifies that
		\[
		(D-b)[p_0(z) e^{a_0 z}]
		=
		\widetilde{p}_0(z) e^{a_0 z},
		\]
		where $\widetilde{p}_0$ has the \emph{same} degree as $p_0$. Consequently, a straightforward induction implies that
		\[
		\prod_{j=1}^n (D - a_j)^{M+1} [p_0(z) e^{a_0 z}]
		\]
		does not vanish identically, a contradiction.
	\end{proof}
	\begin{remark}\lb{rem3.6} Let us make a few comments about the proof of Theorem~\ref{thm34}.
	\begin{enumerate}
\item		Since the argument above is  soft, we do not get any information about $\mathfrak{D}$, except that $\mathfrak{D}$ is discrete. However, in concrete situations in which one has more information, one can say more. For example, the $g$ from \eqref{eq:RLMg} corresponds to the RLM; we can explicitly see that $\mathfrak{D} = \set{(\ell_1-\ell_2)^{-2} \pi^2 k^2 : k \in \Z_+}$. For another example, in the RBM, one has $\supp\widetilde\mu \subseteq \set{b_-,\ldots,b_+} \times \set{1} \times \set{0}$, so one can choose $(b_1,1,0) \neq (b_2,1,0) \in \supp\widetilde\mu$. After some calculations, one obtains
		\[
		\det g
		=
		-\frac{(b_1 - b_2)^2}{b_1 b_2} \sin^2(\sqrt{E}),
		\]
		so F\"urstenberg's Theorem holds away from $\mathfrak{D} = \set{\pi^2 k^2 : k \in \Z_+}$. In this setting there exists a finite set of invariant directions at these special energies. That said, we note that the Lyapunov exponent is still positive by direct calculation.
		
\item		Let us also remark that the transfer matrices may be bounded at a discrete set of energies (compare \cite{DSS2}). For example, take parameters $(b_1,\ell_1,q_1) = (2,1,0)$ and $(b_2,\ell_2, q_2) = (2,3,0)$. Then, at energies $E = \frac{1}{4}\pi^2(2k+1)^2$ with $k \in \Z_+$, $M_1$ and $M_2$ are commuting and elliptic.\footnote{I.e., $|\tr M_j|<2$.} In particular,  the transfer matrices at these energies  are uniformly bounded, so \cite[Corollaries~2.1 and 2.2]{DLS2006} suggest that dynamical localization as formulated in Theorem~\ref{main2}.(ii) cannot hold without excluding these energies.
\end{enumerate}
	\end{remark}

\begin{remark}
As far as  spectral localization is concerned, it suffices to ensure that for every compact interval $I\in\bbR\setminus \mathfrak{D}$, almost surely all generalized eigenvalues exhibit Lyapunov behavior. We will construct a full measure set $\Omega^*\subset \Omega$ such that one has	
\begin{equation}\lb{newquation}
0<L(E)=\lim\limits_{n\rightarrow\infty}\frac{1}{n}\log \|M_n^{E}(\omega)\|
\end{equation}
for every generalized eigenvalue $E\in I$ of $H_{\omega}$ ($M_n^{E}(\omega)$ is defined in \eqref{tm}). As discussed in \cite{BuDaFi}, the work of Gorodetski and Kleptsyn \cite{GK17} shows that dropping the assumption that $E$ is a generalized eigenvalue invalidates the above assertion.
\end{remark}

\subsection{Dynamical Localization for Half-Line Operators}

Our approach relies on the Large Deviation Theorem (LDT) \cite[Theorem~3.1]{BuDaFi}. Although this  is not stated explicitly in \cite{BuDaFi}, the LDT and its corollaries \cite[Theorem~4.1, Corollary~5.3, (5.13)]{BuDaFi} are applicable whenever the conditions of the F\"urstenberg Theorem are met, the corresponding subgroup is contracting and the transfer matrices satisfy Lipschitz estimates which are supplied by the following lemma.
	\begin{lemma}\lb{lem3.7}
	Fix a compact interval $I \subseteq \R$. There are constants $C  > 0$, $\rho > 0$ such that
\begin{equation}\lb{321nn}
	\|M^E_n(\omega) - M^{E'}_n(\omega')\|
\leq
C n\rho^{n-1} \big( |E-E'| + \|\omega - \omega'\|_\infty \big)
\end{equation}
	for all $\omega,\omega' \in \Omega$, $E,E' \in I$, and $n \in \Z_+$. The constants depend only on $I$ and $\supp\widetilde \mu$. Consequently,
	\begin{equation}\lb{322nn}
	|F_n(\omega,E) - F_n(\omega',E')
	\leq
	C\rho^{n-1}(|E-E'| + \|\omega - \omega'\|_\infty),
	\end{equation}
	where $F_n$ is defined as in \eqref{LE2}.
	\end{lemma}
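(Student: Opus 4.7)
The plan is to reduce the claim to Lipschitz and boundedness estimates on a single factor and then telescope, exactly as in standard cocycle arguments (e.g.\ \cite{BuDaFi}).

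\textbf{Step 1: Single-step estimates.} First I would observe that the map
\[
(E,\beta,\lambda,\varkappa) \mapsto M^E(\beta,\lambda,\varkappa) = D(\beta) S(\varkappa) R_{\sqrt{E}}(\lambda \sqrt{E})
\]
is smooth (even analytic in $E$ once one notes that both $\cos(\lambda\sqrt{E})$ and $\sin(\lambda\sqrt{E})/\sqrt{E}$ are entire functions of $E$) on the compact set $I \times \cA$, where $\cA = [b^-,b^+]\times[\ell^-,\ell^+]\times[q^-,q^+] \supseteq \supp\wti\mu$. Hence there exist constants $\rho \geq 1$ and $K>0$, depending only on $I$ and $\supp\wti\mu$, such that
\[
\|M^E(\alpha)\| \leq \rho, \qquad \|M^E(\alpha) - M^{E'}(\alpha')\| \leq K\bigl(|E-E'|+\|\alpha-\alpha'\|\bigr)
\]
for all $E,E'\in I$ and $\alpha,\alpha'\in\supp\wti\mu$. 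The lower bound $\rho\geq 1$ comes for free since $M^E(\alpha)\in \SL(2,\bbR)$.

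\textbf{Step 2: Telescoping.} Write the difference of products as the usual telescoping sum
\[
M^E_n(\omega) - M^{E'}_n(\omega')
=
\sum_{k=0}^{n-1} P^{>}_k \bigl[ M^E(T^k\omega) - M^{E'}(T^k\omega') \bigr] P^{<}_k,
\]
where
\[
P^{>}_k := M^E(T^{n-1}\omega)\cdots M^E(T^{k+1}\omega), \qquad P^{<}_k := M^{E'}(T^{k-1}\omega')\cdots M^{E'}(\omega'),
\]
with the convention that the empty product is the identity. By Step 1, $\|P^{>}_k\| \leq \rho^{n-1-k}$ and $\|P^{<}_k\| \leq \rho^{k}$, so each summand is bounded by $K\rho^{n-1}(|E-E'|+\|T^k\omega-T^k\omega'\|)$. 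Since the sup norm on $\Omega$ is shift-invariant, $\|T^k\omega-T^k\omega'\|_\infty \leq \|\omega-\omega'\|_\infty$, and summing the $n$ terms gives \eqref{321nn} with $C := K$.

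\textbf{Step 3: Passing to $F_n$.} For invertible matrices $A,B$ with $\|A\|,\|B\|\geq 1$, the elementary estimate
\[
\bigl|\log\|A\| - \log\|B\|\bigr| \leq \frac{\bigl|\|A\|-\|B\|\bigr|}{\min(\|A\|,\|B\|)} \leq \|A-B\|
\]
applies. Since $M^E_n(\omega),M^{E'}_n(\omega')\in\SL(2,\bbR)$ both have norm at least $1$, this gives
\[
|nF_n(\omega,E)-nF_n(\omega',E')| \leq \|M^E_n(\omega) - M^{E'}_n(\omega')\| \leq Cn\rho^{n-1}(|E-E'|+\|\omega-\omega'\|_\infty),
\]
and dividing by $n$ produces \eqref{322nn}.

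There is no serious obstacle here: the only mildly delicate point is verifying that $M^E(\alpha)$ extends to an analytic (hence smooth) function through $E=0$, which is immediate from the power series for $\cos$ and $\sin$; once Step~1 is in place the rest is purely formal. The growth rate $\rho^{n-1}$ is as sharp as one can expect from such a telescoping argument given only uniform bounds on single factors.
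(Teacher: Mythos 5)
Your proposal is correct and follows essentially the same route as the paper: single-step uniform bounds and Lipschitz estimates (the paper computes these explicitly for the factors $D$, $S$, $R_{\sqrt{E}}$ rather than invoking compactness, but the content is identical), a telescoping sum over the $n$ factors, and the elementary bound $|\log a - \log b|\leq |a-b|$ for $a,b\geq 1$ to pass to $F_n$. No gaps.
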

	
	\begin{proof}
	Let $n$, $E$, $E'$, $\alpha = (\b,\varkappa,\lambda) \in \cA$, and $\alpha' = (\b',\varkappa', \lambda')\in \cA$ be given. One immediately has
	\begin{equation} \label{eq:StypeLipschitz}
	\|S(\varkappa) - S(\varkappa')\|
	= |\varkappa - \varkappa'|
	\end{equation}
	and
\begin{equation} \label{eq:DtypeLipschitz}
\|D(\b) - D(\b')\| =\left| \sqrt{\b} - \sqrt{\b'} \right| \le \frac{1}{2\sqrt{2}} |\b - \b'|
\end{equation}
since $\b,\b' \geq 2$. Writing $\kappa = \sqrt{E}$, and $\kappa' = \sqrt{E'}$, we get
	\begin{align}  \nonumber
	\|R_\kappa(\lambda \kappa) - R_{\kappa'}(\lambda' \kappa')\|
	& \leq \|R_\kappa(\kappa\lambda) - R_{\kappa'}(\kappa'\lambda)\| + \|R_{\kappa'}(\kappa'\lambda) - R_{\kappa'}(\kappa'\lambda')\| \\
	& \leq
	C(\ell^\pm,I) (|E-E'| + |\lambda - \lambda'|).\label{eq:RtypeLipschitz}
	\end{align}
Using the triangle inequality to change a single one-step transfer matrix at a time, one has
	\begin{align*}
	&\|M_n^E(\omega) - M_n^{E'}(\omega') \| \\
	& \quad\leq \sum_{k=0}^{n-1} \Big\| M_{n-k-1}^{E'}(T^{k+1}\omega') (M_1^E(T^k\omega) - M_1^{E'}(T^k \omega')) M_{k}^E(\omega) \Big\|,
	\end{align*}
	where $T$ is the left shift operator.  Writing
	\begin{equation}\lb{rho}
		\rho
	=
	\sup\set{ \|M^E_{1}(\omega) \| : E \in I, \; \omega \in \Omega },
	\end{equation}
we can estimate the first and third factors by $\rho^{n-k-1}$ and $\rho^k$ respectively. On other other hand, \eqref{eq:StypeLipschitz}, \eqref{eq:DtypeLipschitz}, and \eqref{eq:RtypeLipschitz} yield
\[
\|M_1^E(T^k\omega) - M_1^{E'}(T^k\omega')\|
\leq
C(|E - E'| + \|\omega-\omega'\|_\infty),
\]
so, putting everything together, we have
	\begin{align*}
\|M_n^E(\omega) - M_n^{E'}(\omega') \|
&	\leq
\sum_{k=0}^{n-1} C \rho^{n-1} (|E-E'| + \|\omega - \omega'\|_\infty) \\
& =
Cn\rho^{n-1} (|E-E'| + \|\omega - \omega'\|_\infty),
\end{align*}
proving the first inequality. The second follows from this and the statement $|\log a - \log b| \leq |a-b|$ for $a,b \geq 1$.
	\end{proof}
	
	Having established Theorem~\ref{thm34} and Lemma~\ref{lem3.7}, we may utilize the LDT in our setting. In particular, we have the following:
	\begin{theorem}\lb{thm41} Assume Hypothesis~\ref{hyp2} holds true.
\begin{enumerate}
		\item[{\rm(i)}]  \cite[Theorem~3.1]{BuDaFi}  For any $\varepsilon > 0$, there exist $C,\eta > 0$ such that
		\begin{equation}\lb{42n}
		\mu\set{\omega \in \Omega : \left| L(E) - \frac{1}{n}\log \|M_n^{E}(\omega)\|  \right| \geq \varepsilon}
		\leq
		C e^{-\eta n},
		\end{equation}
		for all $n\ge 0$ and all $E \in I$.
		\item[{\rm(ii)}]  \cite[Theorem~4.1]{BuDaFi}  There exist constants $C=C(I, \wti\mu)$, $\beta=\beta(I, \wti\mu)>0$ such that
		\begin{equation}\lb{44n}
		|L(E)-L(E')|\leq C|E-E'|^{\beta},\  E, E'\in I.
		\end{equation}
		
		\item[{\rm(iii)}]  \cite[Corollary~5.3]{BuDaFi} For every $\varepsilon\in(0,1)$ there exists a full measure set $\Omega_{1}(\varepsilon)$ with   $\mu(\Omega_{1}(\varepsilon))=1$ such that for every $\omega\in\Omega_1(\varepsilon)$ there exists $n_1=n_1(\varepsilon,\omega)$ such that
		\begin{equation}\lb{45new}
		\frac{1}{n}\log \|M_n^{E}(T^{\zeta_0}\omega)\|\leq L(E)+\varepsilon ,
		\end{equation}
		for any $\zeta_0\in\bbZ_+$ and $n\geq \max(n_1, \log ^2(\zeta_0+1))$.\newline
		\item[{\rm(iv)}]
		For every $\varepsilon\in(0,1)$ there exists $\Omega_2(\varepsilon) \subseteq \Omega$, $\mu(\Omega_2(\varepsilon))=1$ with the following property: For every $\omega \in \Omega_2(\varepsilon)$, there exists $n_2 = n_2(\omega,\varepsilon)$ such that
		\begin{equation}\lb{413new}
		\left| L(E) - \frac{1}{n^2} \sum_{s=0}^{n^2-1}\frac{ \log\|M^E_n(T^{\zeta+sn}\omega)\|}{n} \right|
		<
		\varepsilon,
		\end{equation}
		for all $\zeta\in\bbZ_+$, $n \geq \max (n_2, \log^{\frac23}(\zeta+1))$, and $E \in I$.
		\end{enumerate}
	\end{theorem}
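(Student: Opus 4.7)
The plan is to exploit the fact that for fixed $n$, $\zeta$, and $E$, the variables
\[
Y_s(\omega) := \frac{1}{n}\log\|M^E_n(T^{\zeta+sn}\omega)\|,\quad s=0,\ldots,n^2-1,
\]
are \emph{independent and identically distributed}. Indeed, from \eqref{tm}, $M^E_n(T^k\omega)$ depends only on the coordinates $\omega(k+1),\ldots,\omega(k+n)$ of $\omega$, and since the shifts $\zeta+sn$ are spaced exactly by $n$, the $Y_s$ depend on pairwise disjoint blocks. Since $\det M^E_n=1$ one has $\|M^E_n\|\geq 1$, and by definition of $\rho$ in \eqref{rho}, $\|M^E_n\|\leq \rho^n$; thus $Y_s\in[0,\log\rho]$. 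Denote the average by $\overline Y_{\zeta,n,E}(\omega):=n^{-2}\sum_{s=0}^{n^2-1}Y_s(\omega)$. Its expectation is $L_n(E):=\bbE[F_n(\omega,E)]$, and by part (i) combined with $F_n\in[0,\log\rho]$ one obtains $|L_n(E)-L(E)|<\varepsilon/4$ for all $n$ sufficiently large, uniformly in $E\in I$.

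Hoeffding's inequality applied to the i.i.d.\ bounded variables $\{Y_s\}$ then gives the single-point concentration estimate
\[
\mu\{\omega\,:\,|\overline Y_{\zeta,n,E}(\omega)-L(E)|\geq \varepsilon/2\}\leq 2\exp(-cn^2\varepsilon^2)
\]
for some $c=c(\rho)>0$ and all sufficiently large $n$. It remains to upgrade this to a uniform bound over $\zeta\in\bbZ_+$, $E\in I$, and $n$. For fixed $n$, the constraint $n\geq \log^{2/3}(\zeta+1)$ restricts $\zeta$ to the range $[0,\exp(n^{3/2}))$, contributing at most $e^{n^{3/2}}$ admissible values. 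For $E$, Lemma~\ref{lem3.7} shows that $\overline Y_{\zeta,n,E}$ is $C\rho^{n-1}$-Lipschitz in $E$, so a $\delta$-net in $I$ of mesh $\delta\sim \varepsilon\rho^{-n}$ has cardinality $N\leq C'\rho^n/\varepsilon$; the H\"older continuity of $L$ from part (ii) controls $|L(E)-L(E')|$ on the same scale.

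A union bound over this $\zeta$-range and $E$-net then yields
\[
\mu\{\exists\,\zeta\in[0,e^{n^{3/2}}),\ E\in I\,:\,|\overline Y_{\zeta,n,E}(\omega)-L(E)|\geq \varepsilon\}\leq \frac{C''\rho^n}{\varepsilon}\,e^{n^{3/2}}\exp(-cn^2\varepsilon^2).
\]
Since $n\log\rho+n^{3/2}=o(n^2)$, for all sufficiently large $n$ the right-hand side is bounded by $\exp(-c'n^2)$, which is summable in $n$. By the Borel--Cantelli lemma, there exists a full-measure set $\Omega_2(\varepsilon)\subseteq \Omega$ on which only finitely many scales $n$ are exceptional; taking $n_2(\omega,\varepsilon)$ larger than the largest exceptional $n$ completes the argument.

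\textbf{Main obstacle.} The delicate point is balancing three competing sources of loss: the range of admissible $\zeta$ grows like $e^{n^{3/2}}$, the $E$-net has cardinality $\sim\rho^n$, and the concentration probability is $\sim e^{-cn^2\varepsilon^2}$. The choice to average over $n^2$ terms (rather than $n$) is precisely what makes the concentration dominate: with only $n$ terms, the Hoeffding bound would decay like $e^{-cn\varepsilon^2}$, which cannot absorb the $e^{n^{3/2}}$ factor from the union bound over $\zeta$. This is also why the scale $\log^{2/3}(\zeta+1)$ --- rather than, say, $\log(\zeta+1)$ --- appears in the hypothesis.
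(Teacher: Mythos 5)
Your proof of part (iv) is correct. The paper itself gives no argument for this item (parts (i)--(iii) are quoted from \cite{BuDaFi}, and (iv) is the analogue of (5.13) there, imported once the F\"urstenberg conditions and the Lipschitz bounds of Lemma~\ref{lem3.7} are verified), and your argument is essentially the one used in that reference: the blocks $\omega(\zeta+sn+1),\dots,\omega(\zeta+(s+1)n)$ are disjoint, so the $Y_s$ are i.i.d.\ and bounded by $\log\rho$, concentration at rate $e^{-cn^2\varepsilon^2}$ beats the union bound over the $e^{n^{3/2}}$ admissible values of $\zeta$ and the $\rho^n/\varepsilon$-point energy net, and Borel--Cantelli finishes. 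The only cosmetic difference is that you invoke Hoeffding where one could equally use a binomial tail bound built from the LDT of part (i); your identification of why $n^2$ blocks and the exponent $\tfrac23$ are needed is exactly the right point.
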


	Part (iii) yields
	\begin{equation}\lb{46new}
	\mu\set{\omega: \text{for all } E\in I,\  \limsup\limits_{n\rightarrow\infty}\frac{1}{n}\log \|M_n^{E}(\omega)\|\leq L(E)}=1. \
	\end{equation}
	This fact may also be derived from the Craig--Simon approach \cite{CS} (see also \cite{JZ}). Our main focus is on showing
	\begin{equation}\lb{47new}
	\mu\set{\omega \; : \; \begin{aligned}
	&  \liminf\limits_{n\rightarrow\infty}\frac{1}{n}\log \|M_n^{E}(\omega)\|\geq L(E) \\
	& \quad \text{for all {\it generalized eigenvalues}}\  E\in I \end{aligned} }=1.
	\end{equation}

	The next key step is an analog of the elimination of double resonances. Let us note that we do not use the typical formulation of double resonances (cf., e.g., \cite[(9.21)]{Ki08}), since our ultimate goal is to work with transfer matrices in order to apply the Avalanche Principle. The resonances we wish to exclude are those for which there are large disjoint intervals $I_1,I_2 \subseteq \Z$ so that some energy $E$ is very close to an eigenvalue of $H_\omega$ restricted to $I_1$, and the norm of the transfer matrix across $I_2$ at energy $E$ deviates substantially from $\exp(|I_2|L(E))$. In particular, we would like to show that this event occurs with very small probability, see \cite{BoSc1}. We shall make this precise and quantitative in Theorem~\ref{dr}.
	
	   By convention, we write $\|(H^{n}_{\omega}-E)^{-1}\|_{\cB(L^2(t_0, t_{n}))}=+\infty$ whenever $E\in\sigma(H^{n}_{\omega})$. Let us recall $F_n(\omega, E)$ from \eqref{LE2}, and abbreviate $\overline{K}:= \lfloor K^{\log K}\rfloor$.
	\begin{theorem}\lb{dr}
		Given $\varepsilon\in(0,1)$, $N\in\bbN$, let
		\begin{equation}\no
		\cD_N(\varepsilon):= \set{
		\omega\in\Omega\, : \, \begin{aligned}
		&\qquad\quad\text{for some\ } \zeta\in\bbZ_+,\ E\in I,\\
		&K\geq \max\{N,\log^2(\zeta+1)\},\  0<n\leq K^9,\text{\ one has:\ }\\
		&\begin{cases}
\|(H^{\zeta+n}_{\omega}-E)^{-1}\|_{\cB(L^2(t_0, t_{\zeta+n}))}\geq e^{K^2}\\
and\  |F_m(T^{r+\zeta}\omega,E)|\leq L(E)-\varepsilon\\
\text{for some\  }K^{10}\leq r\leq \overline{K}, m\in\{K,2K\}
		\end{cases}
		\end{aligned}
		}
		\end{equation}
		Then there exist $C=C(\varepsilon)>0$, $\eta(\varepsilon)>0$ such that
		\begin{equation}\lb{410m}
		\mu(\cD_N(\varepsilon))\leq Ce^{-\eta N}.
		\end{equation}
		In particular, one has
		\begin{equation}\lb{411m}
		\mu(\Omega_3(\varepsilon))=1\text{\ where \ }  \Omega_3(\varepsilon):=\Omega\setminus\limsup\limits_{N\rightarrow\infty}\cD_N(\varepsilon).
		\end{equation}
	\end{theorem}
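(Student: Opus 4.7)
The plan is to prove \eqref{410m} via a union bound over all admissible parameter tuples $(\zeta, n, K, r, m)$ appearing in the definition of $\cD_N(\varepsilon)$, after reducing the \emph{existential} quantifier over energies $E \in I$ to a conditional union bound over a polynomially-large, $\omega$-dependent set of candidates. This reduction is the main difficulty: at first glance the continuum of possible $E$ precludes a direct application of LDT, but the crucial observation is that the resolvent condition $\|(H^{\zeta+n}_\omega - E)^{-1}\| \geq e^{K^2}$ forces $E$ to lie within $e^{-K^2}$ of the spectrum of $H^{\zeta+n}_\omega$, which contains only $O(\zeta+n)$ points in any fixed compact interval by a standard Sturm/Weyl counting bound.

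Fix such a tuple. Because $r \geq K^{10} > K^9 \geq n$, the operator $H^{\zeta+n}_\omega$ is determined by $\omega(1),\ldots,\omega(\zeta+n)$, while $M_m^E(T^{r+\zeta}\omega)$ depends only on the disjoint block $\omega(\zeta+r+1),\ldots,\omega(\zeta+r+m)$; the two collections are therefore independent under $\mu = \wti\mu^{\bbN}$. Let $\cF$ be the $\sigma$-algebra generated by the first block, and enlarge $I$ to $I' = I + [-1,1]$. Conditionally on $\cF$, the eigenvalues $\lambda_1(\omega),\ldots,\lambda_{M(\omega)}(\omega)$ of $H^{\zeta+n}_\omega$ in $I'$ are deterministic, with $M(\omega) \leq C_I(\zeta+n)$. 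Writing $A$ for the resolvent event and $B$ for the LDT deviation event in the definition of $\cD_N$, the condition $A$ forces $|E - \lambda_k(\omega)| \leq e^{-K^2}$ for some $k$; by the Lipschitz estimate (Lemma~\ref{lem3.7}) and the H\"older continuity of $L$ (Theorem~\ref{thm41}(ii)),
\[
\bigl|F_m(T^{r+\zeta}\omega, E) - F_m(T^{r+\zeta}\omega, \lambda_k(\omega))\bigr| + \bigl|L(E) - L(\lambda_k(\omega))\bigr| \leq C\rho^{2K} e^{-K^2} + C e^{-\beta K^2},
\]
which is $< \varepsilon/2$ once $K \geq K_0(\varepsilon)$. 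Hence on $\cD_N(\varepsilon) \cap \{K \geq K_0\}$, the event $F_m(T^{r+\zeta}\omega, \lambda_k(\omega)) \leq L(\lambda_k(\omega)) - \varepsilon/2$ must hold for at least one eigenvalue.

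Applying Theorem~\ref{thm41}(i) (with threshold $\varepsilon/2$) conditionally at each $\lambda_k(\omega) \in I'$---legitimate since under the conditioning $\lambda_k(\omega)$ is deterministic, $T^{r+\zeta}\omega$ has law $\mu$ by $T$-invariance and independence, and $m \geq K$---and union-bounding over $k$ yields
\[
\mu(A \cap B \mid \cF) \leq C_I(\zeta+n)\, C e^{-\eta K}
\]
for some $\eta = \eta(\varepsilon/2) > 0$. Integrating and summing over admissible parameters for each $K \geq \max\{N, K_0\}$---noting $\log^2(\zeta+1) \leq K$ gives $\zeta \leq e^{\sqrt{K}}$, while $n \leq K^9$, $r \leq \overline{K} = K^{\log K}$, and $m \in \{K, 2K\}$---the counting prefactor grows only sub-exponentially in $K$:
\[
\mu(\cD_N(\varepsilon)) \leq \sum_{K \geq N} 2\, e^{\sqrt{K}} K^{9 + \log K}\, C_I(e^{\sqrt{K}} + K^9)\, C e^{-\eta K} \leq C(\varepsilon) e^{-\eta'(\varepsilon) N}
\]
for any $\eta' \in (0,\eta)$, with the finitely many values $N < K_0(\varepsilon)$ absorbed into $C(\varepsilon)$ via the trivial bound $\mu(\cD_N) \leq 1$. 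This establishes \eqref{410m}, and \eqref{411m} is then immediate from Borel--Cantelli since $\sum_N \mu(\cD_N(\varepsilon)) < \infty$.
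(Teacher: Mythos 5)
Your proposal is correct and follows essentially the same route as the paper: the resolvent bound localizes $E$ to an $e^{-K^2}$-neighborhood of the $O(\zeta+n)$ eigenvalues of the truncation, the Lipschitz and H\"older estimates transfer the deviation to those eigenvalues at the cost of halving $\varepsilon$, the LDT is applied using the disjointness of the blocks $[0,\zeta+n]$ and $[\zeta+r,\zeta+r+m]$, and a subexponential parameter count plus Borel--Cantelli finishes the argument. The only difference is cosmetic: you make the conditioning on the first block explicit, whereas the paper invokes independence implicitly when applying \eqref{42n} at the (random) eigenvalues $E_0$.
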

	\begin{proof}
		Let us fix
		\begin{equation}\lb{412n}
		\zeta\in\bbZ_+,K\geq\max \set{N,\log^2(\zeta+1)}, 0<n\leq K^9,\  K^{10}\leq r\leq \overline{K}, j\in\set{1,2},
		\end{equation}
		and denote
		\begin{equation}\no
		\cD_j(K, n, r, \zeta):= \set{
		\omega\in\Omega\, : \, \begin{aligned}
		&\text{\ for some\  }E\in I,  \text{\ one has}\\
		&\|(H^{\zeta+n}_{\omega}-E)^{-1}\|_{\cB(L^2(t_0, t_{\zeta+n}))}\geq e^{K^2}\text{\ and\ }\\
		&|F_{jK}(T^{r+\zeta}\omega,E)|\leq L(E)-\varepsilon
		\end{aligned}
		}
		\end{equation}
		In order to estimate $\mu(\cD_j(K, n, r, \zeta))$, we pick $\omega\in \cD_j(K, n, r, \zeta)$, consider the corresponding $E\in I$, and notice that (due to the resolvent bound) $E$ is close to an eigenvalue of the Dirichlet--Neumann truncation,  that is,
		\begin{equation}\lb{416n}
		|E-E_0|\leq e^{-K^2}\text{\ for some\ }E_0\in\sigma(H^{\zeta+n}_{\omega}).
		\end{equation}
		 Combining \eqref{322nn}, \eqref{44n}, \eqref{416n}, and choosing $N$ (hence $K$) sufficiently large we obtain
		\begin{equation}
		F_{jK}(T^{\zeta+r}\omega, E_0)\leq L(E_0)-\frac{\varepsilon}{2},
		\end{equation}
		whenever $\omega\in \cD_j(K, n, r, \zeta)$ and $E_0=E_0(\omega_1, ..., \omega_{\zeta+n})$ is as in \eqref{416n}.
		In other words
		\begin{equation}
		\cD_j(K, n, r, \zeta)\subset \hatt \cD_j(K, n, r, \zeta),
		\end{equation}
		where
		\begin{equation}\no
		\hatt \cD_j(K, n, r, \zeta)
		:=
		\bigcup_{E_0\in \sigma(H^{\zeta+n}_{\omega})\cap\hatt I} \set{
		\omega\in\Omega\, : \,\frac{\varepsilon}{2}\leq L(E_0)- F_{jK}(T^{\zeta+r}\omega, E_0)
		},
		\end{equation}
		where $\hatt I:=[\min I -1, \max I +1]$.  We note that $H_{\omega}^{\zeta+n}$ and the standard Dirichlet Laplacian $H_{D}^{ \zeta+n}$  on $(t_0, t_{\zeta+n})$ are self-adjoint extensions of a symmetric (minimal) operator with deficiency indices $(2(\zeta+n), 2(\zeta+n))$, cf. \cite[Section 2.1]{BLS}. Then the spectral shift for these two operators is at most $2(\zeta+n)$, see \cite[Lemma 9.3.2 p.214, Theorem 9.3.3, p. 215]{BirSol}. Combining this with an explicit computation of eigenvalues of $H_{D}^{ \zeta+n}$ we get
		\begin{equation}
		\#\left(\sigma(H^{\zeta+n}_{\omega})\cap \hatt I \right)\leq C|\hatt I|(n+\zeta),
		\end{equation}
		where $C>0$ is a universal constant (we recall from \eqref{ts} that $\ell^-(\zeta+n) \leq |t_{\zeta+n}|\leq \ell^+(\zeta+n)$). Then using \eqref{42n} and $[0,\zeta+n]\cap[\zeta+r, \zeta+r+jK]=\emptyset$, we estimate
		\begin{equation}\lb{420n}
		\mu(\hatt \cD_j(K, n, r, \zeta))\leq C(n+\zeta)e^{-\eta K}\leq C( K^9+e^{\sqrt{K}})e^{-\eta K}\leq Ce^{-\eta_1 K},
		\end{equation}
		for some $\eta_1=\eta_1(\varepsilon)>0$. Clearly, one has
		\begin{equation}
		\mu(\cD_N(\varepsilon))\leq \sum_{K, n, r,\zeta, j\text{\ as in\ }\eqref{412n}}\mu(\hatt \cD_j(K, n, r, \zeta)).
		\end{equation}
		Then for a fixed $K$, the summation with respect to $n, r$ introduces a subexponential number of terms bounded by $e^{-\eta_1 K}$, and summation with respect to $\zeta$ introduces no more than $\lceil e^{\sqrt{K}}\rceil$ terms bounded by $e^{-\eta_1 K}$ (the precise calculation is carried out in the proof of \cite[Proposition~6.1]{BuDaFi}). Thus \eqref{410m} holds as asserted, which together with the Borel--Cantelli lemma yields \eqref{411m}.
	\end{proof}
	
		Let us recall the Avalanche Principle employed in the proof of Theorem \ref{main1}.
	\begin{lemma}[Avalanche Principle]
		\label{l.avlanche-principle}
		Let $A^{(1)},\ldots, A^{(n)}$ be a finite sequence in $\mathrm{SL}(2,\R)$ satisfying the following conditions:
		\begin{align}\label{condition-AP}
		&\min_{1\le j\le n}\|A^{(j)}\|\ge \l > n,\\ \label{condition-AP2}
		&\max_{1\le j<n}\left|\log \|A^{(j+1)}\|+\log \|A^{(j)}\|-\log\|A^{(j+1)}A^{(j)}\|\right|<\frac12\log\l.
		\end{align}
		Then for some absolute constant $C>0$ one has
		\begin{equation} \label{avalanche-principle}
		\left|\log\|A^{(n)}\ldots A^{(1)}\|+\sum_{j=2}^{n-1}\log\|A^{(j)}\|-\sum_{j=1}^{n-1}\log\|A^{(j+1)}A^{(j)}\|\right|\le C\frac{n}{\l}.
		\end{equation}
	\end{lemma}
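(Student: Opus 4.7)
The plan is to exploit the near-rank-one structure of each $A^{(j)} \in \SL(2,\R)$. Since $\det A^{(j)} = 1$, the singular values of $A^{(j)}$ are $\|A^{(j)}\|$ and $\|A^{(j)}\|^{-1} < \lambda^{-1}$. Writing the singular value decomposition with unit right singular vectors $w_j^{+}, w_j^{-}$ and unit left singular vectors $v_j^{+}, v_j^{-}$, one has
\[
A^{(j)} = \|A^{(j)}\|\, v_j^+ \otimes w_j^+ + \|A^{(j)}\|^{-1}\, v_j^- \otimes w_j^-.
\]
I would first expand the two-factor product $A^{(j+1)} A^{(j)}$ against this decomposition. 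A direct computation shows that
\[
\|A^{(j+1)} A^{(j)}\|
=
\|A^{(j+1)}\|\,\|A^{(j)}\|\, |\langle w_{j+1}^+, v_j^+\rangle|
\cdot \bigl(1 + O(\lambda^{-2})\bigr),
\]
so hypothesis \eqref{condition-AP2} yields the crucial alignment estimate $|\langle w_{j+1}^+, v_j^+\rangle| \geq \lambda^{-1/2}(1 + O(\lambda^{-2}))$. This is the geometric input that powers the telescoping argument.

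The next step is an induction on $n$ to control the full product. The base case $n=2$ is tautological. For the inductive step I would set $B := A^{(n-1)}\cdots A^{(1)}$, use the induction hypothesis to bound $\|B\|$ from below so that $B$ itself is nearly rank one with top left singular vector $\tilde v_B$ close to $v_{n-1}^+$ and top right singular vector $\tilde w_B$ close to $w_1^+$, and then verify that the pair $(A^{(n)}, B)$ still satisfies the $n=2$ version of the alignment estimate with a slightly degraded $\lambda$. Taking logarithms and using
\[
\log |\langle w_{j+1}^+, v_j^+\rangle|
=
\log\|A^{(j+1)} A^{(j)}\| - \log\|A^{(j+1)}\| - \log\|A^{(j)}\| + O(\lambda^{-2}),
\]
the logarithm of $\|A^{(n)}\cdots A^{(1)}\|$ telescopes to
\[
\sum_{j=1}^n \log\|A^{(j)}\|
+
\sum_{j=1}^{n-1} \bigl(\log\|A^{(j+1)}A^{(j)}\| - \log\|A^{(j+1)}\| - \log\|A^{(j)}\|\bigr)
+
O(n\lambda^{-1}),
\]
which, after cancellation of the $\log\|A^{(j)}\|$ terms for $2 \leq j \leq n-1$, rearranges to exactly \eqref{avalanche-principle}.

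The main obstacle is the careful bookkeeping of error propagation: each application of the rank-one approximation introduces a factor $(1 + O(\lambda^{-1}))$, and one must verify that the cumulative multiplicative error after $n$ successive applications stays $1 + O(n\lambda^{-1})$ rather than blowing up exponentially. The hypothesis $\lambda > n$ is essential here, since it ensures that the per-step error is much smaller than $1/n$, so the accumulated error remains genuinely small. A clean way to manage this is to carry an auxiliary inductive claim alongside the norm estimate, namely that after absorbing the first $k$ factors the partial product $A^{(k)}\cdots A^{(1)}$ has top left singular direction within angular distance $O(\lambda^{-1})$ of $v_k^+$ and top right singular direction within angular distance $O(\lambda^{-1})$ of $w_1^+$. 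This allows the alignment estimate from \eqref{condition-AP2} to be reused, unchanged in strength, at each step of the telescoping.
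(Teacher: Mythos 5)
The paper itself offers no proof of this lemma, only the citation to \cite[Proposition~2.2]{GS01}, and your outline is essentially a reconstruction of that proof: the singular value decomposition of each unimodular factor, the alignment lower bound $|\langle w_{j+1}^+, v_j^+\rangle|\gtrsim \lambda^{-1/2}$ extracted from \eqref{condition-AP2}, and an induction in which the top singular direction of each partial product stays angularly close to $v_k^+$, so that the per-step errors add to $O(n/\lambda)$ rather than compounding. The one point to tidy in a written version is the order of deductions in your two-factor identity: the correct a priori statement is the additive bound $\bigl|\,\|A^{(j+1)}A^{(j)}\| - \|A^{(j+1)}\|\|A^{(j)}\|\,|\langle w_{j+1}^+, v_j^+\rangle|\,\bigr| \le 2\lambda^{-2}\|A^{(j+1)}\|\|A^{(j)}\|$, from which \eqref{condition-AP2} first yields the alignment lower bound, and only then does the multiplicative form become legitimate (with relative error $O(\lambda^{-3/2})$, not $O(\lambda^{-2})$ --- harmless for the final $O(n/\lambda)$).
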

	\noindent See \cite[Proposition 2.2]{GS01}  for a proof of Lemma~\ref{l.avlanche-principle}.
	
	In order to streamline notation, we use the shorthand $t_n$ for the point $t_{\omega}(n)$.
	\begin{theorem}\lb{main1}
		There exist a discrete set $\mathfrak{D}\subset \bbR$ and a set $\wti \Omega\subset\Omega$ with $\mu(\wti \Omega)=1$ such that for every compact interval $I\subset\bbR\setminus \mathfrak{D}$ and every $\omega\in\wti\Omega$ the following assertions hold:
		\begin{enumerate}
		\item[{\rm(i)}] For every generalized eigenvalue $E\in I$ of the operator $H_{\omega}$, one has
		\begin{equation}\lb{a51}
		\lim\limits_{n\rightarrow\infty}\frac{1}{n}\log \|M_n^E(\omega)\|=L(E).
		\end{equation}
		\smallskip
		
		\item[{\rm(ii)}] The spectral subspace $\ran(\chi_I(H_{\omega}))$ admits a basis of exponentially decaying eigenfunctions.
		\smallskip
		
		\item[{\rm(iii)}] Given $\delta\in(0,1)$ and a normalized eigenfunction
		\begin{equation}
f\in\ker(H_{\omega}-E)\setminus\set{0}, E\in I, \|f\|_{L^2(\bbR_+)}=1,
		\end{equation}
		there exist $\zeta=\zeta(f)\in\bbN$, $C_{\omega, \delta}>0$, $C_{\delta}>0$ such that\footnote{Recall that $L$ and $\wti{L}$ are related via \eqref{e.2cocycles}.}
		\begin{equation}\lb{426}
		|f(x^+)|\leq C_{\omega,\delta}e^{C_{\delta}\log^{C}(\zeta+1)}e^{-(1-\delta)\wti{L}(E)|x-\zeta|},\  x\geq0,
		\end{equation}
		for an absolute constant $C>0$.
	\end{enumerate}
	\end{theorem}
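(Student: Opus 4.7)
The plan is to adapt the scheme of \cite{BuDaFi} to the present half-line setting with Kirchhoff vertex conditions. The overall strategy splits into three stages: establishing Lyapunov behavior at generalized eigenvalues (part (i)), upgrading it to spectral localization via Schnol (part (ii)), and bootstrapping to the quantitative SULE-type bound (part (iii)).

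I would begin by setting $\wti\Omega:=\bigcap_k \bigl(\Omega_1(\varepsilon_k)\cap\Omega_2(\varepsilon_k)\cap\Omega_3(\varepsilon_k)\bigr)$ for a sequence $\varepsilon_k\downarrow 0$, intersected with the full-measure set on which $\tfrac{1}{n}t_\omega(n)\to\langle\ell\rangle$ by Birkhoff. The upper inequality $\limsup_n F_n(\omega,E)\leq L(E)$, uniform in $E\in I$, is immediate from \eqref{46new}. For the matching lower bound at a generalized eigenvalue $E$, I argue by contradiction: if $F_{n_k}(\omega,E) < L(E)-\varepsilon$ along some $n_k\to\infty$, then \eqref{413new} ensures that at an intermediate scale $K$ most $K$-step blocks $M_K^E(T^{\zeta+sK}\omega)$ have norm of the expected size $\exp(KL(E)+o(K))$. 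Telescoping these blocks via the Avalanche Principle (Lemma~\ref{l.avlanche-principle}) forces $F_{n}(\omega,E)\geq L(E)-\varepsilon/2$ at scales $n$ along the same subsequence, contradicting the hypothesis \emph{provided} one can rule out the bad event of a box resonance. This is where Theorem~\ref{dr} enters.

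To close the loop, I would argue that the failure hypothesis in fact produces such a resonance. Using that $E$ is a generalized eigenvalue, the corresponding polynomially bounded solution $\psi$ combined with the Green's-function representation of Proposition~\ref{prop24} on a box $[t_0,t_{\zeta+n}]$ forces $E$ to be exponentially close to the spectrum of $H_\omega^{\zeta+n}$; that is, $\|(H_\omega^{\zeta+n}-E)^{-1}\|\geq e^{K^2}$ for suitable $K,n$ as in Theorem~\ref{dr}. Combined with the assumed bad Lyapunov behavior, this would put $\omega\in\cD_N(\varepsilon)$ for every large $N$, contradicting $\omega\in\Omega_3(\varepsilon)$. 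Sending $\varepsilon\downarrow 0$ proves (i). Statement (ii) then follows from (i) together with Schnol's theorem, which guarantees that the spectral measure of $H_\omega$ is supported on generalized eigenvalues, while positivity of $L$ on $I$ converts the Lyapunov behavior \eqref{a51} into exponential decay of each eigenfunction; relation \eqref{e.2cocycles} translates the discrete rate $L(E)$ into the continuum rate $\wti L(E)$.

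For (iii), given a normalized eigenfunction $f$ of energy $E\in I$, I would define its center of localization $\zeta=\zeta(f)$ as the vertex index at which the pair $(f(t_j^+),f'(t_j^+))$ attains its maximum, and then apply the Green's-function identity to the two half-boxes $[t_0,t_\zeta]$ and $[t_\zeta,t_{\zeta+n}]$ with $n\to\infty$. The sharp upper bound \eqref{45new} on the transfer-matrix norms in both directions, together with the non-resonance event $\Omega_3(\varepsilon)$, yields \eqref{426}; the polylogarithmic factor $\exp(C_\delta\log^{C}(\zeta+1))$ is the exact cost of the restriction $K\geq \log^2(\zeta+1)$ built into Theorem~\ref{thm41}(iii)--(iv) and Theorem~\ref{dr}. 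The principal obstacle throughout is the closure step in Stage~2: converting the hypothetical failure of Lyapunov behavior at a generalized eigenvalue into a genuine box resonance. It rests on careful bookkeeping of the Poisson formula across the Kirchhoff vertices \eqref{vcon} and on the Lipschitz estimate \eqref{321nn}, which allows exchanging the test energy $E$ for an exact eigenvalue of $H_\omega^{\zeta+n}$ without losing the bad-Lyapunov event.
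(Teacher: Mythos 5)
Your overall architecture coincides with the paper's: the same full-measure set $\wti\Omega$ built from Theorem~\ref{thm41} and Theorem~\ref{dr}, the upper bound from \eqref{46new}, the Avalanche Principle, and a Green's-function bootstrap for the SULE bound in (iii). The genuine gap is in the step you yourself flag as the principal obstacle. You assert that the polynomially bounded solution ``combined with the Green's-function representation of Proposition~\ref{prop24}'' forces $\|(H^{\zeta+n}_\omega-E)^{-1}\|\ge e^{K^2}$, but this implication is precisely the nontrivial content of the argument and does not follow from the two ingredients you name. The missing mechanism is: by the averaged large-deviation estimate \eqref{413new}, among the first $K^6$ consecutive blocks of length $K^3$ beyond $\zeta$ there is one, $[\alpha,\beta]$, on which $\|M^E_{K^3}(T^{\alpha}\omega)\|\ge e^{(L(E)-\varepsilon)K^3}$; writing the generalized eigenfunction $u$ at the midpoint $t_m$ of that block in terms of two special solutions $\psi_\pm$ normalized at $t_\alpha^+$ and $t_\beta^-$ (with a case distinction according to which entry of the transfer matrix dominates its norm, so that the Wronskian $W(\psi_+,\psi_-)$ is comparable to the transfer-matrix norm), the polynomial bound on $u$ at the endpoints and the upper bound \eqref{45new} on $\psi_\pm$ at the midpoint give $|u(t_m^-)|,|u'(t_m^-)|\le e^{-2K^2}$. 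Only after this does the box Green's function blow up: comparing $u$, of size one near its normalization point, with $\psi_-/W(\psi_+,\psi_-)$ yields the resolvent lower bound. Without this intermediate smallness of $(u,u')$ at a controlled location, ``polynomially bounded solution $\Rightarrow$ box resonance'' is an unsupported assertion; note also that the resonance comes from the generalized-eigenvalue property alone, not from your ``failure hypothesis.''

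A secondary problem is that the roles of \eqref{413new} and Theorem~\ref{dr} are interchanged in your first paragraph and the quantifiers in your contradiction do not line up. The estimate \eqref{413new} only guarantees that \emph{most} $K$-blocks are good, whereas the Avalanche Principle needs \emph{every} block $A^{(j)}$ and every product $A^{(j+1)}A^{(j)}$ in the chain to satisfy \eqref{condition-AP}--\eqref{condition-AP2}; that uniform statement is exactly what Theorem~\ref{dr} supplies, conditional on the resonance. Moreover the bad event in $\cD_N(\varepsilon)$ concerns a short shifted block $F_m(T^{r+\zeta}\omega,E)$ with $m\in\set{K,2K}$ and $r\ge K^{10}$, not the long product $F_{n_k}(\omega,E)$ from the origin, so the hypothesis $F_{n_k}(\omega,E)<L(E)-\varepsilon$ does not directly witness $\omega\in\cD_N(\varepsilon)$. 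The clean route is direct: generalized eigenvalue $\Rightarrow$ resonance in $[t_0,t_m]$; resonance plus $\omega\notin\cD_N(\varepsilon)$ $\Rightarrow$ all distant $K$- and $2K$-blocks are good; Avalanche Principle plus interpolation $\Rightarrow$ $\frac1n\log\|M^E_n(\omega)\|\ge L(E)-6\varepsilon$ for $n\in[K^{11}+K^{10},\overline{K}]$, and these intervals cover a half-line. Parts (ii) and (iii) are sketched adequately, though (ii) also requires Ruelle's deterministic Oseledec theorem, not merely positivity of $L$, to convert \eqref{a51} into decay of the specific polynomially bounded solution.
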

	\begin{proof}
		We will show that the statement of the theorem holds with $\mathfrak{D}$ as in Theorem~\ref{thm34} and
		\begin{equation}
		\wti\Omega:= \bigcap_{\varepsilon\in(0,\tau)\cap\bbQ}\Omega_1(\varepsilon)\cap\Omega_2(\varepsilon)\cap\Omega_3(\varepsilon),\ \tau:= \frac{1}{3} \min_{E \in I} L(E),
		\end{equation}
		where $\Omega_{1,2,3}(\varepsilon)$ are defined in Theorem~\ref{thm41} (iii), (iv) and in Theorem~\ref{dr} respectively. Note that $\tau > 0$ by Theorem~\ref{thm34}.
		
		{\it Proof of  Part (i)}. Due to \eqref{46new}, it is enough to prove that for a given $\omega\in\wti\Omega$  and for a generalized eigenvalue $E=E_{\omega}\in I$  (which are henceforth fixed) one has
		\begin{equation}\lb{416new}
		\liminf\limits_{n\rightarrow\infty}\frac{1}{n}\log \|M_n^{E}(\omega)\|\geq L(E).
		\end{equation}
		Let $u$ be the generalized eigenfunction of $H_{\omega}$ corresponding to $E$,  that is,
		\begin{align}
		\begin{split}&-u''=Eu, u(0^{+})=0,\text{$u$ satisfies \eqref{vcon} for all\ }j>0,\\
		&\max\set{|u'(t_n^{\pm})|,|u(t_n^{\pm})|}\leq C_u(1+n), n\in\bbZ_+, \text{\ for some\ } C_u>0.\lb{417new}
		\end{split}
		\end{align}
		Our goal is to show that for a given $\varepsilon\in(0,\tau)$ and for all sufficiently large $K$ one has
		\begin{equation}\lb{418}
		\frac{1}{n}\log \|M_n^{E}(\omega)\|\geq L(E)-6\varepsilon,\text{\ for all $n\in[K^{11}+K^{10}, \overline{K}]$. }
		\end{equation}
		Since these intervals cover a half-line, \eqref{418} yields \eqref{416new}.
		
		For a given\footnote{in the sequel $\zeta$ will be determined by the center of localization} $\zeta\in\bbZ_+$  let
		\begin{equation}\lb{349nn}
		K(N):=\max\set{N,  n_1, n_2, n_3, \lceil\log^2(\zeta+1)\rceil},
		\end{equation}
		where $N\in\bbN$ is to be determined\footnote{$N$ will depend on $u$ through $C_u$. In particular, if all generalized eigenfunctions are uniformly bounded, $N$ is $u$-independent.}, $n_1, n_2$ are as in Theorem~\ref{thm41} (iii), (iv) correspondingly, and $n_3=n_3(\omega, \varepsilon)$ is the smallest integer for which
		\begin{equation}\lb{431}
		\omega\in \bigcap_{i\geq n_3}\big(\Omega \setminus \cD_i(\varepsilon)\big).
		\end{equation}
		\begin{step}
	There exists $N=N(C_u)>0$ such that for all $K\geq K(N)$ there exists an integer $m\in[0, \zeta+K^9]$ such that
	\begin{equation}\lb{350nn}
	|u({t_{m}^-})|\leq e^{-2K^2},\ |u'({t_{m}^-})|\leq e^{-2K^2}.
	\end{equation}
\end{step}
\begin{proof}
	First we note that  \eqref{413new} with $n=K^3$ yields
	\begin{equation}\lb{421}
	L(E) - \frac{  \log\|M^E_{K^3}(T^{\zeta+sK^3}\omega)\|}{K^3} < \varepsilon,
	\end{equation}
	or, equivalently,
	\begin{equation}\lb{425}
	\exp((L(E) -\varepsilon)K^3)<  \|M^E_{K^3}(T^{\zeta+sK^3}\omega)\|,
	\end{equation}
	for some $s\in[0, K^6-1]\cap \bbZ_+$ . Focusing on the $s-$th block we introduce the following notation
	\begin{equation}
	[\alpha, \beta]:=[\zeta+sK^3, \zeta+(s+1)K^3],\ m:= \lfloor\frac{\alpha+ \beta}{2}\rfloor.
	\end{equation}
	Our argument is based on a representation of $u$ in terms of its boundary values $u(t_\alpha^+)$, $u(t_\beta^-)$ and special solutions $\psi_{\pm}$ satisfying certain boundary conditions.  The choice of the boundary conditions, hence the representation of $u$, depends on the entry of the matrix
	\begin{equation}\lb{356nn}
	S^{-1}(q_{\beta})D^{-1}(b_{\beta})M^E_{K^3}(T^{\alpha}\omega)
	\end{equation}
	that dominates its norm.  Specifically,  letting $m_{ij}$ denote the $ij$th entry of \eqref{356nn} and assuming that $\psi_{\pm}$ satisfy  $-\psi_{\pm}''=E\psi_{\pm}$, the interior vertex conditions in the interval $[\alpha,\beta]$, and the boundary conditions indicated below, we consider the following four cases.
	
	\noindent \textbf{Case 1.} If $\|S^{-1}(q_{\beta})D^{-1}(b_{\beta})M^E_{K^3}(T^{\alpha}\omega)\|\leq 4 |m_{11}|$ then we let
	\begin{equation}
	\psi_-({{t_{\alpha}^+}})=1,\ \psi'_-({{t_{\alpha}^+}})=0, \psi_+({{t_{\beta}^-}})=0,\ \psi'_+({{t_{\beta}^-}})=1,
	\end{equation}
	and observe that
	\begin{equation}\lb{437n}
	|W(\psi_+,\psi_-)|=|\psi_+'({{t_{\alpha}^+}})|=|\psi_-({{t_{\beta}^-}})|=|m_{11}|>0.
	\end{equation}
	In particular, \eqref{437n} shows that $\psi_-$ and $\psi_+$ are linearly independent, which shows that we may represent
	\begin{equation}\lb{436n}
	 u({t_{m}^-})=u'({{t_{\alpha}^+}})\frac{\psi_+({t_{m}^-})}{\psi_+'({{t_{\alpha}^+}})}+u({{t_{\beta}^-}})\frac{\psi_-({t_{m}^-})}{\psi_-({{t_{\beta}^-}})}.
	\end{equation}
	
	\noindent \textbf{Case 2.} If  $\|S^{-1}(q_{\beta})D^{-1}(b_{\beta})M^E_{K^3}(T^{\alpha}\omega)\|\leq 4 |m_{12}|$ then
	\begin{align}
	&\psi_-({{t_{\alpha}^+}})=0,\ \psi'_-({{t_{\alpha}^+}})=1, \psi_+({{t_{\beta}^-}})=0,\ \psi'_+({{t_{\beta}^-}})=1,\\
	 &u({t_{m}^-})=u({{t_{\alpha}^+}})\frac{\psi_+({t_{m}^-})}{\psi_+({{t_{\alpha}^+}})}+u({{t_{\beta}^-}})\frac{\psi_-({t_{m}^-})}{\psi_-({{t_{\beta}^-}})},\lb{439new}\\
	&|W(\psi_+,\psi_-)|=|\psi_+({{t_{\alpha}^+}})|=|\psi_-({{t_{\beta}^-}})|=|m_{12}|>0.
	\end{align}
	
	\noindent \textbf{Case 3.} If  $\|S^{-1}(q_{\beta})D^{-1}(b_{\beta})M^E_{K^3}(T^{\alpha}\omega)\|\leq 4 |m_{21}|$ then
	\begin{align}
	&\psi_-({{t_{\alpha}^+}})=1,\ \psi'_-({{t_{\alpha}^+}})=0, \psi_+({{t_{\beta}^-}})=1,\ \psi'_+({{t_{\beta}^-}})=0,\\
	 &u({t_{m}^-})=u'({{t_{\alpha}^+}})\frac{\psi_+({t_{m}^-})}{\psi'_+({{t_{\alpha}^+}})}+u'({{t_{\beta}^-}})\frac{\psi_-({t_{m}^-})}{\psi'_-({{t_{\beta}^-}})},\lb{442new}\\
	&|W(\psi_+,\psi_-)|=|\psi'_+({{t_{\alpha}^+}})|=|\psi'_-({{t_{\beta}^-}})|=|m_{21}|>0.
	\end{align}
	
	\noindent \textbf{Case 4.} If  $\|S^{-1}(q_{\beta})D^{-1}(b_{\beta})M^E_{K^3}(T^{\alpha}\omega)\|\leq 4 |m_{22}|$ then
	\begin{align}
	&\psi_-({{t_{\alpha}^+}})=0,\ \psi'_-({{t_{\alpha}^+}})=1, \psi_+({{t_{\beta}^-}})=1,\ \psi'_+({{t_{\beta}^-}})=0,\\
	 &u({t_{m}^-})=u({{t_{\alpha}^+}})\frac{\psi_+({t_{m}^-})}{\psi_+({{t_{\alpha}^+}})}+u'({{t_{\beta}^-}})\frac{\psi_-({t_{m}^-})}{\psi'_-({{t_{\beta}^-}})},\lb{445new}\\
	&|W(\psi_+,\psi_-)|=|\psi_+({{t_{\alpha}^+}})|=|\psi'_-({{t_{\beta}^-}})|=|m_{22}|>0.\lb{366nn}
	\end{align}
	We proceed with Case 1; the other three cases can be handled similarly.  Let us estimate each term in the right-hand side of \eqref{436n}. Combining \eqref{425} and \eqref{437n}, we get
	\begin{align}
	\begin{split}\lb{447n}
	 |\psi_+'({{t_{\alpha}^+}})|=|\psi_-({{t_{\beta}^-}})|=|m_{11}|&\geq\frac{\|S^{-1}(q_{\beta})D^{-1}(b_{\beta})M^E_{K^3}(T^{\alpha}\omega)\|}{4}\\
	&\geq \frac{ \|M^E_{K^3}(T^{\alpha}\omega)\|}{4\|D(b_{\beta})S(q_{\beta})\|}\\
	&\geq c(b^{\pm}, q^{\pm})\exp((L(E) -\varepsilon)K^3),
	\end{split}
	\end{align}
	for some $c(b^{\pm}, q^{\pm})>0$. By \eqref{417new} we get
	\begin{equation}\lb{425new}
	\max\set{|u'({{t_{\alpha}^+}})|, |u({{t_{\beta}^-}})}|\leq C_u(\beta+1)\leq C_u(K^9+e^{\sqrt {K}}).
	\end{equation}
	 Employing \eqref{45new} with $n=\lfloor \frac{K^3}{2}\rfloor$, $\zeta_0=\zeta+sK^3$, and choosing $N$ so that $\lfloor \frac{K^3}{2}\rfloor\geq\log^2(\zeta+sK^3)$ we obtain
	\begin{align}
	\begin{split}
	|{\psi_-({t_{m}^-})}|&\leq \left|\left\langle \begin{bmatrix}1 \\ 0 \end{bmatrix} , S^{-1}(q_{m})D^{-1}(b_{m})M_{\lfloor \frac{K^3}{2}\rfloor}^E(T^{\zeta+sK^3}\omega)\begin{bmatrix} 1 \\ 0\end{bmatrix} \right \rangle\right|\\
	&\leq C(b^{\pm}, q^{\pm})\exp\left(\frac{(L(E)+\varepsilon) K^3}{2}\right),\lb{449new}
	\end{split},
	\end{align}
	for some $C(b^{\pm}, q^{\pm})>0$. Similarly for $N$ so large that $\lfloor \frac{K^3}{2}\rfloor\geq\log^2(\zeta+sK^3+\frac{K^3}{2})$ we obtain
	\begin{align}\lb{427}
	\begin{split}
	\left|\psi_+({t_{m}^-})\right|\leq C(b^{\pm}, q^{\pm}) \exp\left(\frac{(L(E)+\varepsilon)K^3}{2}\right),\ C(b^{\pm}, q^{\pm})>0.
	\end{split}
	\end{align}
	Combining \eqref{436n}, \eqref{447n}--\eqref{427} one obtains
	\begin{align}\lb{428}
	|u({t_{m}^-})|\leq 2C_uC(b^{\pm}, q^{\pm})(K^9+e^{\sqrt {K}})\exp\left(\frac{-L(E)K^3+3\varepsilon K^3}{2}\right)\leq e^{-2K^2},
	\end{align}
	where the last inequality holds whenever $N=N(C_u)$ is large enough and $C(b^{\pm}, q^{\pm})>0$. Replacing $u({t_{m}^-})$ by $u'({t_{m}^-})$, $\psi_{\pm}({t_{m}^-})$ by $\psi'_{\pm}({t_{m}^-})$ in \eqref{436n}, and $[1,0]^{\top}$ by $[0,1]^{\top}$ in \eqref{449new}, \eqref{427} we obtain
	\begin{equation}\lb{452n}
	|u'({t_{m}^-})|\leq e^{-2K^2}.
	\end{equation}
\end{proof}
\begin{step}
	Suppose that $|u(\tau)|=1$ for some $\tau\in\bbR_+$, let $\zeta$ be the largest integer such that $t_{\zeta}\leq \tau$, and recall $m\in[0, \zeta+K^9]$  from Step~1 for such $\zeta$. Then
	\begin{equation}\lb{3.73newnew}
	\|(H^{m}_{\omega}-E)^{-1}\|_{\cB(L^2(t_0, t_{m}))}\geq e^{K^2}.
	\end{equation}
\end{step}
\begin{proof}
	It suffices to show that
	\begin{equation}\lb{440}
	|G^E_{\omega, [0, t_m]}(x, y)|\geq C e^{2K^2},\ (x,y)\in J_1 \times (t_{m}-\delta, t_m),
	\end{equation}
	for some $K-$independent interval $J\subset(t_{\zeta},t_{\zeta+1})$, $K-$independent $\delta>0$, and $C=C(\ell^{\pm}, I)$. Indeed, denoting the characteristic functions of $J, (t_{m}-\delta, t_m)$ by $\chi_1, \chi_2$ respectively, we get
	\begin{equation}
	e^{K^2}\leq \frac{|\langle\chi_1, (H^{m}_{\omega}-E)^{-1}\chi_2\rangle_{L^2(t_0, t_{m})}|}{\|\chi_1\|_{L^2(t_0, t_{m})}\|\chi_2\|_{L^2(t_0, t_{m})}}\leq \|(H^{m}_{\omega}-E)^{-1}\|_{\cB(L^2(t_0, t_{m}))},
	\end{equation}
			for $N$ in \eqref{349nn} sufficiently large (depending only on $C(\ell^{\pm}, I)$).
			To prove \eqref{440} we notice that
			\begin{align}
			\begin{split}
			 u(x)&=u(0^+)\frac{\psi_+(x)}{W(\psi_+,\psi_-)}+u'(t_m^-)\frac{\psi_-(x)}{W(\psi_+,\psi_-)}=u'(t_m^-)\frac{\psi_-(x)}{W(\psi_+,\psi_-)}\\
			&=u'(t_m^-)G^E_{\omega, m}(x, t_m),\ x\in(t_{\zeta},t_{\zeta+1}),\lb{4.53new}
			\end{split}
			 %&u'(t^*)=u(0)\frac{\psi'_+(t^*)}{\psi_+(0)}+u'(m)\frac{\psi'_-(t^*)}{\psi'_-(m)}=u'(m)\frac{\psi'_-(t^*)}{\psi'_-(m)}.\lb{4.54new}
			\end{align}
		%	and the fact that the quantity
		%	\begin{equation}\lb{376new}
		%	E|u(t^-)|^2+{|u'(t^-)|^2} \text{\ is constant on\ }(t_{m-1}, t_{m}], %j\in\bbZ_+.
		%	\end{equation}
		%	First,
		 (this is similar to Case 4 in Step 1 above). By right-continuity of $u$ and $|u(\tau)|=1$  we have
			\begin{equation}
			1/2\leq  |u(x)|,\  x\in J\subset (t_{\zeta}, t_{\zeta+1}),
			\end{equation}
			for some $K-$independent interval $J$.
			%Employing \eqref{350nn} and \eqref{376new}  one infers
			%\begin{equation}\lb{379new}
			%|u'(y)|\lesssim_I e^{-2K^2},\, y\in (t_{m-1}, t_{m}).
			%\end{equation}
			Employing \eqref{350nn}  one infers
			\begin{equation}
			1\lesssim |u(x)|=|u'(t_m)| \left|\frac{\psi_-(x)}{W(\psi_+,\psi_-)}\right| \ \leq e^{-2K^2}\left|\frac{\psi_-(x)}{W(\psi_+,\psi_-)}\right|,
			\end{equation}
			for all $x\in  J$. That is,
			\begin{equation}
			e^{2K^2}\lesssim  \left|\frac{\psi_-(x)}{W(\psi_+,\psi_-)}\right|,\ x\in J.
			\end{equation}
			Furthermore,  noticing that
			\begin{equation}\no
			\psi_+(y)=\cos(\sqrt{E}(y-t_m))\geq 1/2\text{\ for all\ }y\in(t_{m}-\delta, t_m],
			\end{equation}
			for some $K-$independent sufficiently small constant $\delta>0$, and using Proposition \ref{prop24} we arrive at
			\begin{equation}
			|G^E_{\omega, [0,t_m]}(x, y)|=\left|\frac{\psi_-(x) \psi_+(y)}{W(\psi_+,\psi_-)}\right|\geq \left|\frac{\psi_-(x)}{2W(\psi_+,\psi_-)}\right|\gtrsim  e^{2K^2},
			\end{equation}
			for all $(x,y)\in J\times (t_{m}-\delta, t_m]$.	 Thus \eqref{440} holds as required.
		\end{proof}
		\begin{step} Let  $\zeta$ be as in Step~2. Then there exists $N=N(C_u)$ such that for all $K\geq K(N)$ and all $n\in[K^{11}+K^{10}, \overline{K}]$ one has
			\begin{equation}\lb{443}
			\frac{1}{n}\log \|M_n^{E}(T^{\zeta}\omega)\|\geq L(E)-5\varepsilon.
			\end{equation}
		\end{step}
		\begin{proof}
			Combining \eqref{431}, \eqref{3.73newnew}  and Theorem~\ref{dr} one infers
			\begin{equation}\lb{443new}
			\frac{1}{mK}\log \|M_{mK}^{E}(T^{\zeta+r}\omega)\|\geq L(E)-\varepsilon,\  r\in[K^{10}, \overline{K}],\ m\in\{1,2\}.
			\end{equation}

			We will use \eqref{443new} to apply the Avalanche principle, see Lemma \ref{l.avlanche-principle}. Concretely, choose $q \in \Z_+$ with $K^{10} \le q \leq K^{-1}\overline{K} - K^9$, define
			\[
			A^{(j)}
			:=
			M_{K}^E(T^{\zeta + K^{10}+(j-1)K} \omega),
			\quad 1 \le j \le q.
			\]
			With $\lambda := \exp(K(L(E) - \e))$, \eqref{443new} gives
			\[
			\|A^{(j)}\|
			\geq
			\lambda
			\geq
			q
			\]
			for all $j$, where the second inequality holds as long as $N$, cf. \eqref{349nn}, is sufficiently large. Since $K \ge \widetilde n_1$ and $K \geq \log^2(|\zeta| + |\overline{K}|+1)$ (enlarge $N$ if necessary), we may use \eqref{45new} to obtain
			\[
			\|A^{(j)}\|
			\le
			\exp\big( K(L(E)+\e)\big),\; 1\le j\le q.
			\]
			Thus,  implies
			\begin{align*}
			&\left| \log\|A^{(j+1)}\| + \log\|A^{(j)}\| - \log\|A^{(j+1)} A^{(j)}\| \right|\\
			& \log\|A^{(j+1)}\| + \log\|A^{(j)}\| - \log\|A^{(j+1)} A^{(j)}\| \\
			& <
			2K(L(E) + \e) - 2K(L(E) - \e)\\
			& =
			4K\e \\
			& \leq
			\frac{1}{2}\log\lambda,
			\end{align*}
			where the final inequality needs $\e$ to be sufficiently small; we note that this smallness condition depends only on $\widetilde \mu$. Thus, taking $\hat N = qK$ and $r_0 =  K^{10}$, we have $\hat N \in [ K^{11}, \overline{K} - K^{10}]$ and the Avalanche Principle (Lemma~\ref{l.avlanche-principle}) yields
			\begin{align*}
			\log\|M_{\hat N}(T^{\zeta+ r_0}\omega)\|
			& =
			\log\|A^{(q)} \cdots A^{(1)}\| \\
			& \geq
			\sum_{j=1}^{q-1} \log\|A^{(j+1)}A^{(j)}\|
			- \sum_{j=2}^{q-1} \log\|A^{(j)}\|
			- C \frac{q}{\lambda} \\
			& \geq
			(q-1)2K(L(E) - \e) - (q-2)K(L(E) + \e) - C \\
			& \geq
			\hat N (L(E) - 4\e)
			\end{align*}
			again, by choosing $N$ large.
			
			Putting this together, we can control $\|M_n^E(T^\zeta\omega)\|$ for general $K^{11} + K^{10} \leq n \leq \bar K$ by interpolation. In particular, writing $n = qK + p$ with $0 \leq p < K$ and $q\ge K^{10} + K^9$, we have
			\begin{equation} \label{eq:largeLE:on:largescale}
			\begin{split}
			\|M_n^E(T^\zeta \omega)\|
			& \geq
			\frac{\|M_{n-K^{10}}(T^{\zeta + K^{10}}\omega)  \|}{\| M_{K^{10}}(T^\zeta \omega) \|} \\
			& \geq
			\rho^{-K^{10}-p} \|M_{qK-K^{10}}(T^{\zeta + K^{10}}\omega)\| \\
			& \geq
			\rho^{-K^{10} - p} e^{(qK -K^{10})(L(E) -4\e) } \\
			& \geq e^{n (L(E) - 5\e)},
			\end{split}
			\end{equation}
			as long as $N$ is sufficiently large (recall $\rho$ from \eqref{rho}).
		\end{proof}
		Picking $\tau\in(t_0, t_1)$ such that $u(\tau)\not=0$, replacing $u$ by  $\frac{u}{u(\tau)}$, and using \eqref{443} one infers \eqref{418} which in turn yields \eqref{416new} and \eqref{a51}.
		
		{\it Proof of  Part (ii).}  By {\it Part (i)} and Ruelle's deterministic version of Oseledec' Theorem \cite{Ruelle1979,Osc}, every generalized eigenvalue is, in fact, an eigenvalue corresponding to an exponentially decaying eigenfunction. Furthermore, since the spectral measure of $H_{\omega}\chi_I(H_{\omega})$ is supported by the generalized eigenvalues belonging to $I$, cf. \cite[Theorem~C.17]{HiPo}, one infers that $\ran(\chi_I(H_{\omega}))$ admits a basis of exponential decaying eigenfunctions.
		
		{\it Proof of  Part (iii).}  First, we notice that
		\begin{align}
		\begin{split}\lb{464n}
		&\max\set{\|f\|_{L^{\infty}(t_j,t_{j+1})}, \|f'\|_{L^{\infty}(t_j,t_{j+1})}} \\
		&\quad\leq c(\ell^-,\ell^+) ({\|f\|_{L^2(t_j,t_{j+1})}}+\|f''\|_{L^2(t_j,t_{j+1})}) \\
		&\quad \leq c(\ell^-,\ell^+, I)\|f\|_{L^2(\bbR_+)}=c(\ell^-,\ell^+, I),
		\end{split}
		\end{align}
		and
		\begin{align}
		\begin{split}\lb{377new}
		&\|f'\|_{L^{\infty}(t_{j}, t_{j+1})}\leq C(\ell^-,\ell^+)(\|f \|_{L^{2}(t_{j}, t_{j+1})}+\|f''\|_{L^{2}(t_{j}, t_{j+1})})\\
		&\qquad \leq C(\ell^-,\ell^+, I)\|f \|_{L^{2}(t_{j}, t_{j+1})}\leq  C(\ell^-,\ell^+, I)\|f\|_{L^{\infty}(t_{j}, t_{j+1})},
		\end{split}
		\end{align}
		for some $C(\ell^-,\ell^+, I)>0$, and all $j\in\bbZ_+$ cf., e.g,  \cite[Corollary~4.2.10]{Bu}, \cite[IV.1.2]{K80}. In addition we remark that $f$ attains its maximum since
		\begin{align}
		\set{\begin{bmatrix}
		f(t_j^+)\\
		f'(t_j^+)
		\end{bmatrix}}_{j=0}^{\infty}\in \ell^2(\bbZ_+,\C^2)\text{\ and thus\ }\lim\limits_{t\rightarrow\infty}(|f(t)|+|f'(t)|)=0.
		\end{align}
		Therefore, we may repeat the arguments of the proof of Part ~(i)  with
		\begin{equation}
		\begin{split}\lb{467n}
		&u=\frac{f}{\|f\|_{L^{\infty}(\bbR_+)}},\ C_u=  \max\set{1, C(\ell^-,\ell^+, I)}\,\text{\ in Step 1},\\
		&\tau=\text{argmax}{|f|} \; \text{(i.e.\ $\tau$ is chosen so that $|f(\tau)| = \|f\|_\infty$) in Step~2},
		\end{split}
		\end{equation}
		where we pick any value of argmax if there is more than one extremum. Then  for a given $\varepsilon\in(0,\tau)$ there exists $N=N(\varepsilon, \omega)$ (which does not  depend on $f$) such that for all $K\geq K(N, \log^2(\zeta+1))$ and all $n\in[K^{11}+K^{10}, \overline{K}]$ one has
		\begin{equation}
		\frac{1}{n}\log \|M_n^{E}(T^{\zeta}\omega)\|\geq L(E)-6\varepsilon.
		\end{equation}
		Utilizing this with sufficiently small $\varepsilon$ (depending on $\delta$ only) and letting
		\begin{equation}
		\varkappa:= c(b^{\pm}, \ell^{\pm} I)\max\set{1, C(\ell^-,\ell^+, I)},
		\end{equation}
		see \eqref{464n}, \eqref{377new}, we will show that
		\begin{equation}\lb{459n}
		|f(t_{\zeta+n}^+)|\leq \varkappa e^{-(1-\delta)L(E)n}, \text{\ for all\  }n\in\left[\frac{p}{4}, \frac{p-1}{2}\right],
		\end{equation}
		for all $p\in [K^{11}+K^{10}, \overline{K}]$, $K\geq K(N)$. As in Step~1 our subsequent argument relies on a representation of $f$ considered on the interval $[t_\zeta, t_{\zeta+p}]$ in terms of its boundary values. Our choice of the representation, as before, depends on the entry of
		\begin{equation}
		S^{-1}(q_{\zeta+p})D^{-1}(b_{\zeta+p})M_p^{E}(T^{\zeta}\omega)
		\end{equation}
		that dominates its norm. We will provide the argument assuming that the  maximizing entry is $11$ and note that the other three cases can be treated almost identically.
		
		One has
		\begin{align}\lb{459}
		 \frac{f(t_{\zeta+n}^+)}{M_f}=\frac{f'(t_\zeta^{+})\psi_+(t_{\zeta+n}^+)}{M_f\psi_+'(t_\zeta^+)}+\frac{f(t_{\zeta+p}^-)\psi_-(t_{\zeta+n}^+)}{M_f\psi_-(t_{\zeta+p}^-)},
		%f'(\zeta+n)=f(\zeta)\frac{\psi'_+(\zeta+n)}{\psi_+'(\zeta)}+f(\zeta+p)\frac{\psi'_-(\zeta+n)}{\psi_-(\zeta+p)},
		\end{align}
		where  $M_f:=\|f \|_{L^{\infty}(\bbR_+)}$, $-\psi_{\pm}''=E\psi_{\pm}$, $\psi_{\pm}$ satisfies the interior vertex conditions in the interval $[t_\zeta, t_{\zeta+p}]$, and
		\begin{align}
		&\psi_-(t_\zeta^+)=1,\ \psi'_-(t_\zeta^+)=0, \psi_+(t_{\zeta+p}^-)=0,\ \psi'_+(t_{\zeta+p}^-)=1,
		\end{align}
		and
		\begin{align}
		\begin{split}
		|W(\psi_+,\psi_-)|&=|\psi_+'(t_\zeta^+)|=|\psi_-(t_{\zeta+p}^-)|\\
		&\geq\frac{\|S^{-1}(q_{\zeta+p})D^{-1}(b_{\zeta+p})M_p^{E}(T^{\zeta}\omega)\|}{4}\\
		&\geq\frac{\|M_p^{E}(T^{\zeta}\omega)\|}{4\|D(b_{\zeta+p})S(q_{\zeta+p})\|}\\
		& \geq c(b^{\pm}, \ell^{\pm})\exp((L(E) -6\varepsilon)p),\lb{463n}
		\end{split}
		\end{align}
		for some $c(b^{\pm}, \ell^{\pm})>0$.
		In order to estimate $\psi_{-}(t_{\zeta+n}^+)$, we rewrite it in terms of the transfer matrices and use \eqref{45new} as follows
		\begin{equation}
		|\psi_{-}(t_{\zeta+n}^+)|=\left|\left\langle \begin{bmatrix}1 \\ 0 \end{bmatrix} , M_{n}^E(T^{\zeta}\omega)\begin{bmatrix} 1 \\ 0\end{bmatrix} \right \rangle\right|\leq \exp((L(E) +\varepsilon)n).
		\end{equation}
		Similarly one can estimate $\psi_{+}(t_{\zeta+n}^+)$.
		Combining this and \eqref{464n}, \eqref{377new}, \eqref{459}, \eqref{463n} we get
		\begin{align}
		\begin{split}
		&|f(t_{\zeta+n}^+)|\leq \varkappa\exp((L(E) +\varepsilon)n-(L(E) -6\varepsilon)p)\\
		&\quad\quad+\varkappa\exp((L(E) +\varepsilon)(p-n)-(L(E) -6\varepsilon)p)\\
		&\quad\leq \varkappa\exp(-(p-n)L(E) +(n+6p)\varepsilon)\\
		&\quad\quad+\varkappa\exp(-nL(E) +(7p-n)\varepsilon)\\
		&\quad\leq 2\varkappa \exp(-nL(E) +8p\varepsilon)\leq 2\varkappa \exp(-nL(E) +32n\varepsilon)\\
		& \quad\leq  2\varkappa e^{-(1-\delta)nL(E)},
		\end{split}
		\end{align}
		to facilitate the last inequality we pick $\varepsilon=\varepsilon(\delta)>0$ sufficiently small (depending only on $\delta$).  Thus
		\begin{equation}\lb{469n}
		|f(t_{\zeta+n}^+)|\leq 2\varkappa e^{-(1-\delta)L(E)n},
		\end{equation}
		for all
		$n\in [\frac{K^{11}+K^{10}}{4}, \frac{\overline{K}-1}{2}]$ and $K\geq K(N)$. Since these intervals cover the half-line $[\frac{K^{11}}{2}, \infty)$ for sufficiently large $N$, the inequality in \eqref{469n} holds for all
		\begin{equation}
		n\geq \frac{K^{11}}{2}
		=
		\frac{1}{2} \max\set{N(\omega,\varepsilon), \log^2(\zeta+1) }^{11}.
		\end{equation}
		Furthermore, estimating $f(t_{\zeta+n}^+)$ for
		\begin{equation}
		n\in[0, 2^{-1}\max\set{N(\omega,\varepsilon), \log^2(\zeta+1)}^{11}]
		\end{equation}
		trivially and changing variables $k=\zeta+n$, we get
		\begin{align}
		\begin{split}\lb{3102n}
		|f(t_k^+)|&\leq 2\varkappa e^{(1-\delta)L(E)\max\set{N(\omega,\varepsilon), \log^2(\zeta+1) }^{11}}e^{-(1-\delta)L(E)(k-\zeta)}\\
		&\leq C_{\omega,\delta}e^{C_{\delta}\log^{22}(\zeta+1)}e^{-(1-\delta)L(E)|k-\zeta|},\ k\geq\zeta.
		\end{split}
		\end{align}
		A similar estimate can be obtained for $k\in[0, \zeta]$: In this case, the Lyapunov behavior \eqref{469n} is observed only for sufficiently large  $\zeta$, in which case \eqref{469n} holds for $k\in[0, \zeta-\frac{K^{11}}{2}]$ (for small $\zeta$, use the trivial bound).
		
		In order to show a version of \eqref{3102n} with $f$ replaced by $f'$, we employ
		
	\begin{align}
	 \frac{f'(t_{\zeta+n}^+)}{M_f}=\frac{f'(t_\zeta^{+})\psi'_+(t_{\zeta+n}^+)}{M_f\psi_+'(t_\zeta^+)}+\frac{f(t_{\zeta+p}^-)\psi'_-(t_{\zeta+n}^+)}{M_f\psi_-(t_{\zeta+p}^-)},
	%f'(\zeta+n)=f(\zeta)\frac{\psi'_+(\zeta+n)}{\psi_+'(\zeta)}+f(\zeta+p)\frac{\psi'_-(\zeta+n)}{\psi_-(\zeta+p)},
	\end{align}
		and repeat \eqref{463n}--\eqref{3102n}.
		Finally, keeping in mind Remark~\ref{r.2cocycles} and interpolating between the discrete vertices, we infer \eqref{426}.
	\end{proof}
	Having established existence of a basis of semi-uniformly localized eigenfunctions (SULE) we turn to dynamical localization. Our argument stems from the proof of   \cite[Theorem~2.1]{GdB}.

\begin{proof}[Proof of Theorem~\ref{thm48}]
	Our first goal is to  derive an upper bound for the number of centers of localization\footnote{$\zeta$ from \eqref{426} is called the center of localization of $f$} located in a large interval $[0, L]$. Let $\set{\varphi_n}_{n=1}^{\infty}$ be an $L^2(\bbR_+)-$orthonormal basis of exponentially decaying eigenfunctions of the spectral subspace $\ran(\chi_I(H_{\omega}))$; the corresponding eigenvalues are denoted by $E_n\in I$, $n\geq 1$. Then by \eqref{426} with
	\begin{equation}
	\delta:=1/2, \nu:=\min (\min\limits_{E\in I} \wti{L} (E), 1)>0,
	\end{equation}
	we have
	\begin{equation}\lb{474}
	|\varphi_n(x)|\leq C_{\omega}e^{C\log^{22}(\zeta_n+1)}e^{-\frac{\nu{|x-\zeta_n|}}{2}},\  x\geq 0.
	\end{equation}
	We claim that
	\begin{equation}\lb{475new}
	\cN(L):=\#\set{n:  \zeta_n\leq L }
\leq C(\omega, I) L,\
	L\geq L_0,
	\end{equation}
	for sufficiently large $L_0>0$. For $L>0$ let $\chi_{3L}\in\cB(L^2(\bbR_+))$ denote the operator of multiplication by the characteristic function of $[0,3L]$, let $R( H_{\omega})$ denote the resolvent of $H_{\omega}$ at $\lambda=\min\sigma(H_\omega)-1$ and note that $ \|R^2(H_{\omega})\|_{\cB(L^2(\bbR_+))}\leq 1$.  Next we show
	\begin{equation}\lb{475}
	\cN(L)\leq C(\omega, I) \tr(\chi_{3L}R^2( H_{\omega})\chi_{3L}),
	\end{equation}
	for sufficiently large $L$ and some $ C(\omega, I)$. To that end, notice that
	\begin{align}
	\frac{1}{(E_n-\lambda)^2}&=\langle\varphi_n, R^2(H_{\omega})\varphi_n\rangle_{L^2(\bbR_+)}\\
	&=\langle\varphi_n,\chi_{3L} R^2(H_{\omega})\chi_{3L}\varphi_n\rangle_{L^2(\bbR_+)}\\
	&\quad+\langle\varphi_n,\chi_{3L} R^2(H_{\omega})(1-\chi_{3L})\varphi_n\rangle_{L^2(\bbR_+)}\\
	&\quad+\langle\varphi_n,(1-\chi_{3L}) R^2(H_{\omega})\chi_{3L}\varphi_n\rangle_{L^2(\bbR_+)}\lb{476}\\
	&\quad+\langle\varphi_n,(1-\chi_{3L}) R^2(H_{\omega})(1-\chi_{3L})\varphi_n\rangle_{L^2(\bbR_+)}.\lb{477}
	\end{align}
	Assuming that $\zeta_n\leq L$, $E_n\in I$, and  $C\log^{22}(L+1)<\frac{\nu L}{4}$ and using \eqref{474} we obtain
	\begin{equation}
	|\varphi_n(x)|\leq C_{\omega}e^{\frac{\nu L}{4}}e^{-\frac{\nu|x-\zeta_n|}{2}},\  x\geq 0,
	\end{equation}
	and
	\begin{align}
	&\langle\varphi_n,\chi_{3L} R^2(H_{\omega})(1-\chi_{3L})\varphi_n\rangle_{L^2(\bbR_+)}\leq	 \left\|(1-\chi_{3L})\varphi_n\right\|_{L^2(\bbR_+)}\no\\
	&\quad\leq  C_{\omega}e^{\frac{\nu L}{4}} \left(\int_{3L}^{\infty}e^{-\nu|x-\zeta_n|}dx\right)^{1/2}\no\\
	&\quad \leq C_{\omega}e^{\frac{\nu L}{4}} e^{\frac{\nu\zeta_n}2} e^{-\frac{3\nu L}{2}}\nu^{-\frac12}\leq  C_{\omega}e^{-\frac {3\nu L}{4} }\nu^{-\frac12} \underset{L\rightarrow\infty}{=}o(1).\no
	\end{align}
	Similar estimates hold for  \eqref{476} and \eqref{477}.  Therefore we have
	\begin{align}
	\tr(\chi_{3L}&R^2( H_{\omega})\chi_{3L})\\
	&\geq \sum_{n:\,\zeta_n\leq L}\langle\varphi_n,\chi_{3L} R^2(H_{\omega})\chi_{3L}\varphi_n\rangle_{L^2(\bbR_+)}\\
	&\geq \sum_{n:\,\zeta_n\leq  L}\left(\frac{1}{(E_n-\lambda)^2}- 3  C_{\omega}e^{-\frac {3\nu L}{4} }\nu^{-\frac12}\right)\\
	&\geq C(I, \omega)\,\#\set{n: \zeta_n \leq L},
	\end{align}
	for some $C(I, \omega)>0$.
	
	Next we estimate the right-hand side of \eqref{475}.  Let us recall that $AB\in \cB_2(L^2(\bbR_+))$ (the space of Hilbert--Schmidt operators on $L^2(\bbR_+)$) and
	\begin{equation}
	\|AB\|_{\cB_2(L^2(\bbR_+))}\lesssim \|A\|_{\cB(L^{\infty}(\bbR_+), L^{2}(\bbR_+))}\|B\|_{\cB(L^{2}(\bbR_+)), L^{\infty}(\bbR_+))},
	\end{equation}
	whenever $A\in\cB(L^{\infty}(\bbR_+), L^{2}(\bbR_+))$, $ B\in\cB(L^{2}(\bbR_+), L^{\infty}(\bbR_+))$. A discussion of this fact together with related references can be found, for instance, in \cite[Section 4.1.11]{St01} and \cite[pp. 418--419]{Stol}. This result is applicable in our case due to \cite[ Lemma~C.12]{HiPo} which asserts that $R(H_{\omega})$ maps (boundedly) $L^2(\bbR_+)$ into $L^{\infty}(\bbR_+)$.  Combining these facts we infer
	\begin{align}
	\begin{split}\lb{486}
	&\tr(\chi_{3L}R^2( H_{\omega})\chi_{3L}) = \|\chi_{3L}R( H_{\omega})\|_{\cB_2(L^2(\bbR_+))}^2\\
	&\quad\leq \left(\sqrt{3L}\|R( H_{\omega})\|_{\cB(L^{2}(\bbR_+), L^{\infty}(\bbR_+))}\right)^2\leq C(\omega)L,
	\end{split}
	\end{align}
	for some $C(\omega)>0$. Then \eqref{475} and \eqref{486} yield \eqref{475new}.
	
	Next, we turn to \eqref{3.102}. For brevity, denote $\gamma:=22+\varepsilon$ and let $\kappa>0$ be such that
	\begin{equation}\lb{3.121}
	|\log^{\gamma}(x+\kappa)-\log^{\gamma}(y+\kappa)|\leq \frac{\nu|x-y|}{4},\ x,y> 0.
	\end{equation}
	Then we have
	\begin{align}
	&\left\||X|^p\chi_I(H_{\omega}) e^{-itH_{\omega}}\psi \right\|_{L^2(\bbR_+)}\\
	&\quad \leq \sum_{n=1}^{\infty} |\langle \varphi_n, \psi\rangle_{L^2(\bbR_+)}|\,\||X|^p\varphi_n\|_{L^2(\bbR_+)}\lb{4.88n}\\
	&\quad\leq \sum_{n=1}^{\infty} C_{\omega, I}\,e^{2C\log^{22}(\zeta_n+1)}\int_{\bbR_+}|\psi(x)|e^{-\frac{\nu|x-\zeta_n|}{2}}dx\left(\int_{\bbR_+}x^{2p}e^{-{\nu|x-\zeta_n|}}dx\right)^{1/2}\\
	&\quad\leq \sum_{n=1}^{\infty} C_{{\omega}, I, p,\psi}\,e^{2C\log^{22}(\zeta_n+1)}\zeta_n^p\int_{\bbR_+}e^{-\log^{\gamma} (x+\kappa)}e^{-\frac{\nu|x-\zeta_n|}{2}}dx \\
	&\quad\leq  \sum_{n=1}^{\infty} C_{{\omega}, I, p,\psi}\,e^{2C\log^{22}(\zeta_n+1)+p\log(\zeta_n+1)-\log^{\gamma}(\zeta_n+\kappa)}\\
	&\hspace{4cm}\times\int_{\bbR_+}e^{-\log^{\gamma} (x+\kappa)+\log^{\gamma}(\zeta_n+\kappa)-\frac{\nu|x-\zeta_n|}{2}}dx \\
	&\quad\underset{\eqref{3.121}}{\leq} \sum_{n=1}^{\infty}C_{{\omega}, I, p,\psi}\,e^{2C\log^{22}(\zeta_n+1)+p\log(\zeta_n+1)-\log^{\gamma}(\zeta_n+\kappa)}\\
	&\hspace{4cm}\times\int_{\bbR_+}e^{-\frac{\nu|x-\zeta_n|}{4}}dx \\	
	&\quad\leq  \wti C_{{\omega}, I, p,\psi}\sum_{n=1}^{\infty} e^{2C\log^{22}(\zeta_n+1)+p\log(\zeta_n+1)-\log^{\gamma}(\zeta_n+\kappa)}\\
	&\quad\leq   \wti C_{{\omega}, I, p,\psi}\sum_{L=0}^{\infty}\,\sum_{n: \zeta_n=L} e^{2C\log^{22}(\zeta_n+1)+p\log(\zeta_n+1)-\log^{\gamma}(\zeta_n+\kappa)}\\
	&\quad\leq   \wti C_{{\omega}, I, p,\psi}\sum_{L=0}^{\infty}\cN(L)\ e^{2C\log^{22}(L+1)+p\log(L+1)-\log^{22+\varepsilon}(L+\kappa)}<\infty,\lb{4.91n}
	\end{align}
	where we used \eqref{475new} in the last inequality.
\end{proof}

\section{Random Metric Trees}\lb{rcm}
\subsection{The Almost-Sure Spectrum for Continuum Models}\lb{almsure}

 Our first objective is to show that almost surely the spectrum of  $\bbH_{\omega}$ is given by a deterministic set $\Sigma$.

	\begin{theorem}\lb{prop3.1} There exists a full $\mu$-measure set $\hatt\Omega\subset\Omega$ such that
		\begin{align}\lb{3.4}
		\sigma(\bbH_{\omega} )=\Sigma:= \overline{\bigcup_{(b, \ell, q) \textup{ periodic}} \sigma( \bbH(b,\ell,q ))},\, \omega\in\hatt\Omega.
		\end{align}
	\end{theorem}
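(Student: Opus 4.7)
The plan is to use the orthogonal decomposition of Section~\ref{sec2.1} to reduce to the half-line setting and then apply Proposition~\ref{rem2.3} in both directions. Concretely, the unitary equivalence \eqref{na}--\eqref{na8} yields, for every $\omega \in \Omega$,
\[
\sigma(\bbH_\omega) = \overline{\bigcup_{n\ge 0}\sigma(H_{T^n\omega})},
\]
and analogously $\sigma(\bbH(b,\ell,q)) = \overline{\bigcup_{n\ge 0}\sigma(H(T^n b, T^n\ell, T^n q))}$ for any periodic parameter triple. So it suffices to show that, almost surely, $\overline{\bigcup_n \sigma(H_{T^n\omega})}$ coincides with the closure of the spectra of the half-line operators whose parameters are periodic with values in $\supp\wti\mu$.

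For the inclusion $\sigma(\bbH_\omega) \subseteq \Sigma$, which holds on the full-measure set $(\supp\wti\mu)^\bbN$, fix $n \ge 0$ and $E \in \sigma(H_{T^n\omega})$. Proposition~\ref{rem2.3} produces a compactly supported Weyl sequence $\{\varphi_k\}$ with $\supp\varphi_k \subseteq [t_0, t_{m_k}]$. Let $\omega^{(k)}$ be the $m_k$-periodic extension of the first $m_k$ coordinates of $T^n\omega$; its parameters coincide with those of $H_{T^n\omega}$ on $[t_0, t_{m_k}]$, so $\varphi_k$ is simultaneously an approximate Weyl sequence element for $H(\omega^{(k)})$ at $E$, yielding $E_k \in \sigma(H(\omega^{(k)}))$ with $E_k \to E$. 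This places $E$ in $\Sigma$. For the reverse inclusion $\Sigma \subseteq \sigma(\bbH_\omega)$, fix a countable dense $\{E_m\} \subseteq \Sigma$, each within $1/m$ of some $E'_m \in \sigma(H(b^{(m)},\ell^{(m)},q^{(m)}))$ for a periodic half-line parameter triple with values in $\supp\wti\mu$. Apply Proposition~\ref{rem2.3} to obtain a compactly supported Weyl sequence $\{\varphi_k^{(m)}\}$ on $[t_0, t_{m_k}]$ at energy $E'_m$. For any $\eta > 0$, the cylinder event
\[
A_{n,k,\eta} := \{\omega : \|\omega(n+j) - (b^{(m)}_j,\ell^{(m)}_j,q^{(m)}_j)\|_\infty < \eta,\ 1 \le j \le m_k\}
\]
has uniformly positive $\mu$-probability, since each target value lies in $\supp\wti\mu$; by independence across disjoint blocks and Borel--Cantelli, $A_{n,k,\eta}$ occurs for infinitely many $n$ almost surely. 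For such $\omega$, transplanting $\varphi_k^{(m)}$ to the block starting at $t_\omega(n)$ yields a test function for $H_{T^n\omega}$ whose form discrepancy from the periodic case is $O(\eta)$, so $\sigma(\bbH_\omega)$ comes within $O(\eta)$ of $E'_m$. Intersecting over countably many $(m, k, \eta)$ yields the required full-measure $\hatt\Omega$.

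The main delicate point is the $\supseteq$ direction when $\wti\mu$ has continuous components in $\ell$ or $q$: exact matches between $T^n\omega$ and the periodic target have probability zero, so one must use near-matches together with quantitative stability of the quadratic forms under parameter perturbations, a form-side analog of the Lipschitz estimates in Lemma~\ref{lem3.7}. In the purely discrete RBM case this obstacle disappears, but the argument above handles all three continuum models uniformly.
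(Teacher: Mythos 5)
Your argument follows the same route as the paper: both directions hinge on the form Weyl criterion of Proposition~\ref{rem2.3} applied to compactly supported test functions after reducing to the half-line operators via the orthogonal decomposition, and the $\Sigma\subseteq\sigma(\bbH_\omega)$ direction uses Kirsch-type almost-sure approximation to transplant Weyl sequences into blocks of the random configuration. The paper makes precise exactly the two points you flag as delicate: the $\sigma(\bbH_\omega)\subseteq\Sigma$ direction is handled not by asserting $\mathrm{dist}(E,\sigma(H(\omega^{(k)})))\to 0$ directly but by forming $F_k=(H(b^k,\ell^k,q^k)-E)^{-1}f_k$, which is uniformly bounded precisely because $E\notin\Sigma$, and deriving the contradiction $(\gh_\omega-E)[f_k,\varphi_k F_k]=1$ against the vanishing Weyl residual; and the transplant in the reverse direction is carried out via the explicit edgewise affine rescaling $s_{i,k}$ in \eqref{328}, which is what accommodates the mismatched edge lengths while preserving the scale-invariant vertex conditions of \eqref{213}.
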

	\begin{proof} Since
		\begin{equation}\lb{35nn}
		\sigma( \bbH(b,\ell,q ))=\overline{\bigcup_{k\in\bbZ_+} \sigma(H(T^kb,T^k\ell,T^kq))},
		\end{equation}
		one has
		\begin{equation}
		\sigma(\bbH_{\omega})=\overline{\bigcup_{k\in\bbZ_+} \sigma(H_{T^k\omega})};\ \Sigma=\overline{\bigcup_{(b, \ell, q)\ \text{ periodic}} \sigma(H(b,\ell,q))}.
		\end{equation}
		First, we will first show that
		\begin{align}\lb{34}
		\sigma(H_{\omega} )\subset \Sigma,\text{\ for all\ } \omega\in\Omega,
		\end{align}
		and therefore $\sigma( \bbH_{\omega})\subset \Sigma$.
		Let us fix $\omega\in\Omega$. Seeking a contradiction, we pick $E\in \sigma(H_{\omega})\setminus \Sigma$. Then there exist
		\begin{equation}
		\set{f_k}_{k=1}^{\infty}\subset\dom(H_{\omega})\text{\ and\ } \set{m_k}_{k=1}^{\infty}\subset\bbN,
		\end{equation}
		such that
		\begin{align}
		&\|f_k\|_{L^2(t_0,\infty)}=1,\  \text{supp}(f_k)\subset[t_0,t_{m_k}], \\
		&\sup\limits_{\substack{g\in\dom(\mathfrak h_{\omega})\\ \|g\|_{ \hatt H^1(t_0,\infty)}\leq 1}}  (\mathfrak h_{\omega}-E)[f_k, g]\rightarrow 0,\ k\rightarrow\infty,\lb{3.10}
		\end{align}
		where $\mathfrak h_{\omega}=\mathfrak h(b_{\omega}, \ell_{\omega}, q_{\omega})$, cf. \eqref{2.11new}--\eqref{2.13new} (we recall that $\hatt H^1-$norm is equivalent to the form norm, see \eqref{2.9}). Let $(b^k, \ell^k, q^k)\in\Omega$ denote the $m_k-$periodic sequence  whose first $m_k$ elements are given by $\omega_1,\dots, \omega_{m_k}$. Then  since $E\not\in\Sigma$ one has
		\begin{equation}\lb{3.11}
		C:=\sup\limits_{k\in\bbN}\|F_k\|_{\hatt H^1(t_0,\infty)}<\infty,\ F_k:=(H(b^k, \ell^k, q^k)-E)^{-1}f_k,
		\end{equation}
		where the first inequality follows from the fact that $F''_k=-EF_k-f_k$ and Sobolev inequalities.
		Suitable truncations of $F_k$ belong to $\dom(\mathfrak h_{\omega})$. Indeed, for $k\in\bbN$, let $\varphi_k\in C_0^{\infty}[t_0,\infty)$ be such that $\supp \varphi_k\subset[t_0, t_{m_k+1}]$, $0\leq \varphi_k(x)\leq 1$, $x\geq t_0$,  and
		\begin{equation}
		\varphi_k(x)=\begin{cases}
		1, & x\in[t_0, t_{m_k}],\\
		0, & x\in [t_{m_k+1},\infty).
		\end{cases}
		\end{equation}
		Then  for all $k\in\bbN$ one has
		\begin{equation}\lb{317new}
		\begin{split}
		&(\varphi_k F_k)\in\dom(\mathfrak h_{\omega})\\
		&\|\varphi_k F_k\|_{\hatt H^1(\bbR_+)}\leq \max\set{1, \|\varphi_k\|_{H^1(t_{m_k}, t_{m_k+1})}}\| F_k\|_{\hatt H^1(\bbR_+)}\lesssim 1,
		\end{split}
		\end{equation}
		where we used $\|\varphi_k F_k\|_{ H^1(t_{m_k}, t_{m_k+1})}\lesssim \|\varphi_k \|_{ H^1(t_{m_k}, t_{m_k+1})}\|F_k \|_{ H^1(t_{m_k}, t_{m_k+1})}$, see \cite[Theorem 4.14]{Gr}. Moreover, one has
		\begin{align}
		\begin{split}\lb{3.14}
		(\mathfrak h_{\omega}&-E)[f_k,  \varphi_kF_k]= \langle \varphi_kF_k, -f_k''-Ef_k\rangle_{L^2(\bbR_+)}\\
		&=\langle F_k, -f_k''-Ef_k\rangle_{L^2(\bbR_+)}\\
		&= \left\langle (H(b^k, \ell^k, q^k)-E)^{-1}f_k, (H(b^k, \ell^k, q^k)-E)f_k\right\rangle_{L^2(\bbR_+)}=1.
		\end{split}
		\end{align}
		Combining \eqref{3.10}, \eqref{317new} and \eqref{3.14} we obtain a contradiction.
		
		Next we show that exists a full $\mu$-measure set $\hatt \Omega\subset\Omega$ such that
		\begin{align}\lb{3.15n}
		\Sigma\subset\sigma(\bbH_{\omega} ),\  \omega\in\hatt\Omega.
		\end{align}
		First of all, we note  that $E\in\sigma(\bbH_{\omega})$ whenever there exist two sequences of natural numbers
		\begin{equation}\lb{t16}
		\{r_k\}_{k=1}^{\infty}\subset\bbN,\ \{m_k\}_{k=1}^{\infty}\subset\bbN,
		\end{equation}
		and a sequence of functions $ \{ f_k\}_{k=1}^{\infty}$ such that $f_k \in\dom(\mathfrak h_{T^{r_k} \omega})$ satisfying
		\begin{equation}\lb{t17}
		\liminf_{k\rightarrow\infty}\|f_k\|_{L^2(t_{\omega}(r_k),\infty)}>0,\  \text{supp}(f_k)\subset[t_{\omega}(r_k),t_{\omega}(r_k+m_k)],\ k\in\bbN,
		\end{equation}
		and
		\begin{align}\lb{t18}
		&\sup\ (\mathfrak h_{T^{r_k}\omega}-E)[f_k, g]\rightarrow 0,\ k\rightarrow\infty,
		\end{align}
		where the supremum is taken over the set
		\begin{equation}
		\{{g\in\dom(\mathfrak h_{T^{r_k}\omega}): \|g\|_{ \hatt H^1(t_{\omega}(r_k),\infty)}\leq 1}\}.
		\end{equation}
		This is due to orthogonal decomposition \eqref{na} and the standard Weyl criterion for $\bbH_{\omega}$. Secondly, there exists $\hatt \Omega\subset \Omega$, $\mu(\hatt \Omega)=1$ such that for arbitrary
		\begin{equation}\lb{319new}
		\omega\in\hatt\Omega,\ (b,\ell,q)\in\supp( \mu),  \{m_k\}_{k=1}^{\infty}\subset \bbN,
		\end{equation}
		there exists a sequence $\{r_k\}_{k=1}^{\infty}$ such that for all  $k\in\bbN$ one has
		\begin{align}\lb{3.14new}
		&b_{\omega}({r_k+i})=b_i \text{\ for all\ }i\in\set{1,..., m_k},\\
		&\max_{1\leq i\leq m_k}|\ell_{\omega}({i+r_k}) - \ell_i|\leq \frac{\sqrt{\ell^{-}}}{k},\lb{3.21nn}\\
		&\max_{1\leq i\leq m_k}|q_{\omega}({i+r_k}) - q_i|\leq \frac{1}{k},\lb{332nn}
		\end{align}
		see, for example,  \cite[Proposition~3.8]{Ki08}. We claim that \eqref{3.15n} holds with this choice of $\hatt \Omega$. Indeed, pick any  periodic sequence $(b,\ell,q)$ and $E\in \sigma(H(b,\ell, q))$.  Then by Proposition~\ref{rem2.3} there exist
		\begin{equation}
		\{\varphi_k\}_{k=1}^{\infty}\subset\dom(\mathfrak h(b,\ell,q)),\  \{m_k\}_{k=1}^{\infty}\subset\bbN,
		\end{equation}
		such that
		\begin{align}
		&\sup\limits_{k\in\bbN}\|\varphi_k\|_{\hatt H^1(t_0,\infty)}<\infty, \|\varphi_k\|_{L^2(t_0, \infty)}=1,\  \supp(\varphi_k)\subset[t_0,t_{m_k}],  \lb{323}\\
		&\sup\limits_{\substack{g\in\dom(\mathfrak h(b,\ell,q))\\ \|g\|_{ \hatt H^1(t_0,\infty)}\leq 1}}  (\mathfrak h(b,\ell,q)-E)[\varphi_k, g]\rightarrow 0,\ k\rightarrow\infty.
		\end{align}In order to produce a singular sequence for $\bbH_{\omega}$ we will rescale  $\varphi_k$
		from $[t_{i-1}, t_{i}]$ to $[t_{\omega}(r_k+i-1), t_{\omega}(r_k+i)]$.
		That is, for every $i, k\in\bbN$ we let
		\begin{align}\lb{328}
		f_k(y)&:= \varphi_k(s_{i,k}^{-1}(y)),\ y\in [t_{\omega}(r_k+i-1), t_{\omega}(r_k+i)],
		\end{align}
		where
		\begin{align}
		s_{i,k}(x)&:=\frac{t_{\omega}(r_k+i)-t_{\omega}(r_k+i-1)}{\ell_{i}}(x-t_{i-1})+ t_{\omega}(r_k+i-1),
		\end{align}
		for $x\in[t_{i-1}, t_{i}]$. Then changing variables one obtains
		\begin{align}\lb{2.23n}
		&\langle f_k', g'\rangle_{L^2\big(t_{\omega}(r_k+i-1), t_{\omega}(r_k+i)\big)}
		={\frac{\ell_{i}}{\ell_{\omega}(r_k+i)}}\ \langle \varphi_k', (g\circ s_{i,k})'\rangle_{L^2( t_{i-1},\   t_{i})},\\
		& \langle f_k, g \rangle_{L^2\big(t_{\omega}(r_k+i-1), t_{\omega}(r_k+i)\big)}
		={\frac{\ell_{\omega}(r_k+i)}{\ell_{i}}}\ \langle \varphi_k, g\circ s_{i,k} \rangle_{L^2( t_{i-1},\   t_{i})},\lb{330}
		\end{align}
		where $g\in \hatt H^1(t_{\omega}(r_k), \infty)$.
		Let us denote
		\begin{equation}
		\wti g_k(x):=(g\circ s_{i,k})(x),\ x\in[t_{i-1}, t_i],\ i\in\bbN, k\in\bbN.
		\end{equation}
		Then using  \eqref{2.23n}, \eqref{330} with $f_k$ replaced by $g$ we note that there exists a constant $C>0$ which does not depend on $k$ such that
		\begin{equation}\lb{3.33}
		\|\wti g_k\|_{\hatt H^1(t_0, t_{m_k})}\leq C \text{\ if\ } \|g\|_{\hatt H^1(t_{\omega}(r_k), \infty)}\leq 1,\ k\in\bbN.
		\end{equation}
		We claim that $\{f_k\}_{k=1}^{\infty}$ is a singular sequence satisfying \eqref{t16}--\eqref{t18}. First, we know that $f_k \in \dom(\gh_{T^{r_k}\omega})$ holds since the vertex conditions displayed in \eqref{213} are scale-invariant. Next,  the  conditions in \eqref{t17} hold due to \eqref{323} and \eqref{330} (with $g = f_k$). In order to check \eqref{t18}, let us fix $k\in\bbN$ and  $g$ with $\|g\|_{ \hatt H^1(t_{\omega}(r_k),\infty)}\leq 1$. Then one has
		\begin{align}
		&|(\mathfrak h_{T^{r_k}\omega}-E)[f_k, g]-(\mathfrak h(b,\ell,q)-E)[\varphi_k, \wti g_k]|\leq \lb{3.35}\\
		&\leq \Big|\sum_{i=1}^{m_k}\left({\frac{\ell_{i}}{\ell_{\omega}(r_k+i)}}-1\right)\ \langle \varphi_k', (g\circ s_{i,k})'\rangle_{L^2( t_{i-1},\   t_{i})}\\
		&\qquad\qquad-E\left(\frac{\ell_{\omega}(r_k+i)}{\ell_{i}}-1\right)\ \langle \varphi_k, g\circ s_{i,k}\rangle_{L^2( t_{i-1},\   t_{i})}\Big|\no\\
		&\quad+\left| \sum_{i=1}^{m_k}(q_i-q_{\omega}(r_k+i))\, \overline{\varphi_k(t_{i}^-)}( g\circ s_{i,k})(t_{i}^-)\right|\\
		&\,\lesssim  \frac{ \|\varphi_k\|_{\hatt H^1(\bbR_+)}\,\| \wti g_k\|_{\hatt H^1(\bbR_+)}}{k}\\
		&\, \lesssim \frac1k\rightarrow 0,\ k\rightarrow\infty.\lb{3.36}
		\end{align}
In the first inequality we employed \eqref{2.23n} and \eqref{330};  in the second one we used the Cauchy--Schwarz inequality, the fact that $|\varphi_k(t_i^-)|\lesssim \|\varphi_k\|_{\hatt H^1(t_{i-1},t_i)}$, \eqref{3.21nn}, and \eqref{332nn}; and finally in the last inequality we used \eqref{323} and \eqref{3.33}. Hence, \eqref{t18} holds and $E\in\sigma(\bbH_{\omega})$ as asserted.
	\end{proof}

\begin{remark}\lb{remark4.2}
It is natural to conjecture that the spectrum for the half-line operator $H_{\omega}$ is a deterministic set given by the union of periodic spectra of $H(b,\ell,q)$. The latter, under some spectral monotonicity assumption, in turn equals the union of constant spectra, which in certain scenarios can be computed explicitly. However, neither standard ergodicity arguments (e.g., proof of Pastur's Theorem) nor spectral theoretical arguments (cf.\ \cite[proof Lemma 1.4.2]{St01} and \cite{KirschMartinelli1982}) seem to be applicable to the {\it half-line} models in question. We note that the half-line models present both probabilistic and spectral-theoretical complications which are not typical for operators on $\bbR$.
\end{remark}
	
\subsection{Proof of Dynamical and Exponential Localization for Metric Trees}

	We say that a function $f: \Gamma_{b, \ell}\rightarrow \bbR$ is {\it tree-exponentially decaying} if there exist $\lambda\geq0$ and $C=C(f, \lambda)>0$ such that
	\begin{equation}
	|f(x)|\leq  \frac{Ce^{-\lambda |x|}}{\sqrt{w_o(|x|)}},
	\end{equation}
	where $w_o(|x|)$ denotes the number of vertices in the same generation as $x$; cf.\ \eqref{eq:wvdef}.
	\begin{proof}[Proof of Theorem~\ref{main2}] (i) By Theorem~\ref{prop3.1} and part (ii) of Theorem~\ref{main1}, there exist full measure sets $\hatt \Omega, \wti\Omega\subset\Omega$ such that
		\begin{equation}
		\sigma(\bbH_{\omega})=\Sigma,\ \sigma_c(H_{\omega})=\emptyset,\ \omega\in\hatt \Omega\cap\wti \Omega,
		\end{equation}
and the operator $H_{\omega}$ enjoys a basis of exponentially decaying eigenfunctions. Then letting
		\begin{equation}\lb{340new}
		\Omega^*:=\bigcap_{n\in\bbZ_+}T^{-n}(\hatt \Omega\cap\wti \Omega),
		\end{equation}
		we notice that $\mu(\Omega^*)=1$ and that
		\begin{align}
		&\sigma(\bbH_{\omega})=\Sigma,\ \sigma_c(\bbH_{\omega})=\overline{\bigcup_{n\in\bbZ_+}\sigma_c(H_{T^n\omega})}=\emptyset,\  \omega\in\Omega^*,
		\end{align}
		where we used the orthogonal decomposition \eqref{na8}. Next we show that $\bbH_{\omega}$ admits a basis of tree-exponentially decaying eigenfunctions almost surely. To that end, let us fix $\omega\in \wti \Omega$,  $v\in\cV\setminus\{o\}$, $\gen(v)=n\in\bbN$, and $1\leq k\leq b_{n-1}$. Then it suffices to construct a basis of tree-exponentially decaying eigenfunctions in $\cL_{v,k}=\cU_{v,k}^{-1}(L^2(t_{\omega}(n),\infty))$, cf. \eqref{24new},  \eqref{25new}. For a basis element $f\in\ker(H_{T^n\omega}-E)$ of $L^2(t_{\omega}(n),\infty)$, we define the corresponding basis element of $\cL_{v,k}$,
		\begin{equation}
		\psi_f:=\cU_{v,k}^{-1}f,\  \psi_f\in\dom(\bbH_{\omega}).
		\end{equation}
		Then \eqref{29nn} yields
		\begin{equation}
		|\psi_f(x)|\leq \frac{C_{f} e^{-\frac{\wti L(E) |x|}{2}}}{\sqrt{w_v(|x|)}}.
		%\leq\begin{cases} \frac{C_{f} e^{-{\lambda |x|}}}{\sqrt{w_v(|x|)}}, &\text{\ if\ } L(E) > 0,\\ \frac{C_{f}}{\sqrt{w_v(|x|)}},&\text{\ if\ } L(E)=0,\ 		\end{cases}
		\end{equation}
		%where $\lambda:=\min_{E\in I}\frac{\wti L(E)}{2}$.
		A basis of tree-exponentially decaying eigenfunctions of $\cL_o$ can be constructed similarly.
		 \newline
		(ii) Let $v\in\cV$ and $n:=\gen(v)$, then by Part (iii) of Theorem~\ref{main1},  the  subspace $\ran(\chi_I(H_{T^n\omega}))$ is spanned by semi-uniformly localized eigenfunctions
		\begin{equation}\lb{3.48new}
		f_{n,j}\in \ker(H_{T^n\omega}-E_j(n)), j\in\bbZ_+, E_j(n)\in I, n=\gen(v).
		\end{equation}
		For $ 1\leq k\leq b_{n-1}$, $j\in\bbZ_+$ we introduce
		\begin{equation}
		\psi_{v,k, j}:=\cU_{v,k}^{-1}f_{n,j}\in \dom(\bbH_{\omega}),
		\end{equation}
		and notice that
		\begin{equation}\lb{350new}
		\supp(\psi_{v,k, j})\subset T_v,
		\end{equation}
the forward subtree rooted at $v$. Then for $\omega\in\Omega^*$ one has (abbreviating $\Gamma = \Gamma_{b_\omega,\ell_\omega}$):
		\begin{align}
		&\left\||X|^p\chi_I(\bbH_{\omega}) e^{-it\bbH_{\omega}}\chi_{\mathcal{K}}\right\|_{L^2(\Gamma)}\no\\
		&\quad\leq \sum_{v\in\cV}\ \  \sum_{k=1}^{b_v-1}\ \  \sum_{\set{j : \, \substack{ E_j(n)\in I, \\ E_j(n)\text{\ as in\ }\eqref{3.48new}}}} |\langle \psi_{v,k, j}, \chi_{\cK}\rangle_{L^2(\Gamma)}|\,\||X|^p\psi_{v,k, j}\|_{L^2(\Gamma)}\no\\
		&\quad\underset{\eqref{350new}}{\leq} \sum_{v\in\cV, \ T_v \cap \mathcal K \neq \emptyset}\ \  \sum_{\substack{1\leq k\leq b_v-1 \\j: E_j(n)\in I}} |\langle \psi_{v,k, j}, \chi_{\cK}\rangle_{L^2(\Gamma)}|\,\||X|^p\psi_{v,k, j}\|_{L^2(\Gamma)}\no\\
		&\quad\leq \sum_{\substack{v\in\cV, \ T_v \cap \mathcal K \neq \emptyset\\1\leq k\leq b_v-1\\j: E_j(n)\in I}} \int_{\cK\cap T_v}|\psi_{v,k, j}(x)|dx  \ \ \ \, \left(\int_{\Gamma}x^{2p}|\psi_{v,k, j}(x)|^2dx\right)^{1/2} \no\\
		&\quad\leq \sum_{\substack{v\in\cV, \ T_v \cap \mathcal K \neq \emptyset,\\  n=\gen(v)\\1\leq k\leq b_v-1,\\j: E_j(n)\in I}} \int_{|\cK\cap T_v|}(w_v(t))^{1/2}|f_{n,j}(t+|v|)|dt \\
		&\hspace{4cm} \times\left(\int_{\Gamma}|x|^{2p}|\psi_{v,k, j}(x)|^2dx\right)^{1/2} \no\\
		%%%%
		&\quad\leq \sum_{\substack{v\in\cV, \ T_v \cap \mathcal K \neq \emptyset,\\  n=\gen(v)\\j: E_j(n)\in I}} C_{v, K}\int_{|\cK\cap T_v|}|f_{n,j}(t+|v|)|dt \\
		& \hspace{4cm}\times \left(\int_{|v|}^{\infty}|\tau|^{2p}|f_{n, j}(\tau)|^2d\tau\right)^{1/2} \lb{351new},
		\end{align}
		where $|\cK|:=[0, \text{diam}(\cK) ]$. Proceeding as in \eqref{4.88n}--\eqref{4.91n} with $\psi$ replaced by the characteristic function of the interval $[0, \text{diam}(\cK) ]$, we deduce that \eqref{351new} converges as asserted.
	\end{proof}

\begin{remark}\lb{rem42}We notice that all eigenfunctions $\psi_E$ (including those corresponding to energies $E\in\mathfrak D$) satisfy
	\begin{equation}\lb{4.34nn}
	|\psi_E(x)|\leq  \frac{Ce^{-\lambda_E |x|}}{\sqrt{w_o(|x|)}},
	\end{equation}
	for some $\lambda_E\geq0$ and $C>0$, where $w_o(|x|)$ denotes the number of vertices in the same generation as $x$; cf.\ \eqref{eq:wvdef}. Moreover, one has $\lambda_E>0$ whenever $E\not\in\mathfrak D$, in particular, \eqref{4.34nn} yields $\psi_E\in L^2(\Gamma_{b, \ell})$ in this case. Furthermore,  if $E\in\mathfrak D$ and $\lambda_E=0$ then $\psi_E$ still decays exponentially, $|\psi_E(v)|\leq  \frac{C}{2^{|\gen (v)|/2}}$ for all $v\in\cV$. However, this inequality alone is insufficient to deduce $L^2(\Gamma_{b,\ell})$ integrability.  The analogous issue does not arise in the setting of metric graphs for which the volume of the ball centered at the root with radius $r$ grows polynomially as $r\uparrow+\infty$, e.g., as in the metric graph spanned by $\bbZ^d$.
\end{remark}

\part{Anderson Localization for Discrete Radial Trees} \lb{sec:discrete}

\section{Random Discrete Trees} This part of the paper concerns Anderson localization for discrete radial trees.
\begin{hypothesis}\lb{hyp5.1} Let $\Gamma=(\cV,\cE)$ be a rooted, radial discrete tree.  Assume that the branching numbers $b_v\in[b^-,b^+]$, $b^-\geq2$, and the potential $q_v\in[q^-,q^+]$ are radial. Let
	\begin{equation}
	p: \{(u,v)\in \cV^2: d(u,v)=1\}\rightarrow[p^-, p^+],
	\end{equation}	
	be radial, symmetric, and bounded, that is,
	\begin{equation}
	p(u,v)= p_{\min(\gen(u),\gen(v))},\text{\ for\ } u,v\in\cV;
	\end{equation}	
	and $p:=\{ p_n\}_{n=0}^{\infty}\subset[p^-,p^+],$ $p_{-1}=0$, $\ p^{\pm}\in(0,\infty)$.
\end{hypothesis}
Assuming this hypothesis, we introduce a bounded operator $\bbJ(b,p,q)\in\cB(\ell^2(\cV))$ as follows
\begin{equation}\lb{5.1}
(\bbJ({b, p, q}) f)(u):=\sum_{v\sim u} p(u,v)\big(q(u)f(u)-f(v)\big),\ f\in \ell^2(\cV).
\end{equation}

In this part, we adopt the notation of the previous sections with the convention that all edges have length one. Thus, for vertices $x,y \in \cV$, $\dist(x,y)$ is the combinatorial distance between them, and, in particular $|x| = \gen(x)$ for all $x \in \cV$.

\subsection{The Almost-Sure Spectrum for Discrete Models}\lb{almsuredis}

The following hypothesis is assumed throughout this section.
\begin{hypothesis}\lb{hyp61}
	Let $\widetilde \mu$ be a probability measure  with $\supp(\wti \mu)= \mathcal{A}$, $\#\cA\geq 2$, and either
	\begin{equation}\lb{515nn}
	\cA\subseteq
	\{b_-,\ldots,b_+\}\times \{1\} \times[q_-,q_+]
	\end{equation}
	or
	\begin{equation}\lb{516nn}\begin{split}
	\cA&\subseteq
	\{b_-,\ldots,b_+\} \times[p_-,p_+]\times \{0\} \\ &  \text{ and } \exists (b,p,0), (b',q',0) \in \supp\wti\mu \text{ with } p\sqrt{b} \neq p' \sqrt{b'}.
	\end{split}
	\end{equation}
\end{hypothesis}
Let us remark that the secondary hypothesis in \eqref{516nn} is essential, for, if $\supp\wti\mu$ is concentrated on a set for which $q = 0$ and $p\sqrt{b} = \mathrm{const.}$, then the Jacobi matrices arising in the orthogonal decomposition of $\mathbb J_\omega$ will all have constant entries.

We introduce $(\Omega, \mu):=(\mathcal{A}^{\bbZ_+}, \wti\mu^{\bbZ_+})$. For $\omega \in \Omega$, define the operators $\bbJ_\omega:=\bbJ(b_\omega, p_\omega, q_\omega)$ and Jacobi matrices $J_{\omega}:= J(b_{\omega},   p_{\omega}, q_{\omega})$, where
\begin{equation}
\{( b_\omega(n), p_\omega(n), q_\omega(n))\}_{n=0}^{\infty},
\end{equation}
is a sequence of i.i.d. random vectors with common distribution $\wti\mu$. Let us notice that
\begin{equation}
\bbJ_{\omega}=
\begin{cases}
\mathbb S_{\omega}\  (\text{cf}.\ \eqref{int18}), &\text{if \eqref{515nn} holds,}\\
\mathbb A_{\omega}\  (\text{cf}.\ \eqref{int19}), &\text{if \eqref{516nn} holds.}
\end{cases}
\end{equation}
In particular,
\begin{itemize}
	\item Random Branching Model (RBM) arises when  \[\supp\widetilde\mu \subseteq \{b_-,...,b_+\} \times \{1\} \times \{1\}\],
	\item Random Weight Model (RWM) arises when  $\supp\widetilde\mu \subseteq \{d\} \times[p_-,p_+]\times\{0\}$,
	\item Random Schr\"odinger Operator (RSO) arises when $\supp\widetilde\mu \subseteq \{d\}  \times \{1\}\times[q_-,q_+]$.
\end{itemize}
\begin{remark}\lb{rem5.3} We point out that RBM and RSO concern random realizations of the {\it discrete Laplace operator}, while RWM is focused on {\it the adjacency matrices}, i.e. $q\equiv 0$. Typically (e.g., for $\bbZ^d$ models) the distinction between the discrete Laplace operator and the adjacency matrix of the graph is irrelevant as the two operators differ by a scalar multiple of the identity operator. In the setting of non-constant trees, however, the distinction is more subtle since it depends on the branching numbers. What is more, the consecutive transfer matrices for RWM are correlated unless $q\equiv 0$.
\end{remark}

Abusing notation somewhat, we will identify a scalar with a constant sequence consisting of that scalar, for example writing $\mathbb{A}(2,1,0)$ to mean the adjacency operator for which all branching numbers are two and all $p$'s are one.

\begin{theorem}\lb{thm5.5}There exists a full $\mu$-measure set $\hatt \Omega\subset\Omega$ such that
	\begin{align}\lb{6.2}
	\sigma(\bbA_{\omega} )= \Sigma:=\overline{\bigcup_{(b, p)\ \text{ periodic}} \sigma(\bbA(b,p,0))},\, \omega\in\hatt\Omega.
	\end{align}
\end{theorem}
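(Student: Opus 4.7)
The proof should follow the two-step argument of Theorem~\ref{prop3.1} with the continuum-specific form-theoretic technicalities replaced by the bounded, tridiagonal bookkeeping that is natural for discrete Jacobi matrices. The first step is to invoke the discrete analog of the orthogonal decomposition of Section~\ref{sec2.1}: decomposing $\ell^2(\cV)$ into the reducing subspaces indexed by vertices, each reduced operator is unitarily equivalent to a half-line Jacobi matrix $J_{T^n\omega}$ on $\ell^2(\bbZ_+)$ (with the period-one edge structure). This yields
\[
\sigma(\bbA_\omega) = \overline{\bigcup_{n\in\bbZ_+} \sigma(J_{T^n\omega})}, \qquad \Sigma = \overline{\bigcup_{(b,p)\text{ periodic}} \sigma(J(b,p,0))},
\]
reducing the claim to the corresponding two inclusions for the half-line Jacobi matrices.

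For the inclusion $\sigma(J_\omega)\subseteq\Sigma$ holding for every $\omega$, I would fix $E\in\sigma(J_\omega)$ and argue by contradiction assuming $E\notin\Sigma$. Weyl's criterion supplies $\psi_k\in\ell^2(\bbZ_+)$ with $\supp\psi_k\subseteq[0,m_k]$, $\|\psi_k\|=1$, and $\|(J_\omega-E)\psi_k\|\to0$. Define $J^k$ to be the periodic Jacobi matrix whose off-diagonal and diagonal entries coincide with those of $J_\omega$ on a window large enough to contain the stencil acting on $\psi_k$ (entries $a_0,\ldots,a_{m_k}$ and $b_0,\ldots,b_{m_k}$), then extended periodically. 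Since $\sigma(J^k)\subseteq\Sigma$ and $\Sigma$ is closed, $\|(J^k-E)^{-1}\|\leq \dist(E,\Sigma)^{-1}$; putting $F_k:=(J^k-E)^{-1}\psi_k$ and using that the stencils of $J^k$ and $J_\omega$ agree wherever $\psi_k$ is nonzero, one has
\[
1=\langle\psi_k,\psi_k\rangle=\langle\psi_k,(J^k-E)F_k\rangle=\langle(J_\omega-E)\psi_k,F_k\rangle\longrightarrow 0,
\]
which is the discrete analog of \eqref{3.14} and yields a contradiction.

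For the almost-sure inclusion $\Sigma\subseteq\sigma(\bbA_\omega)$, the scheme of the proof of Theorem~\ref{prop3.1} transplants almost verbatim and in a simpler form because no rescaling is required. Fix a periodic pair $(b,p)$ and $E\in\sigma(J(b,p,0))$; Weyl's criterion delivers $\varphi_k\in\ell^2(\bbZ_+)$ with $\supp\varphi_k\subseteq[0,m_k]$, $\|\varphi_k\|=1$, and $\|(J(b,p,0)-E)\varphi_k\|\to0$. By Birkhoff's ergodic theorem (compare \cite[Proposition~3.8]{Ki08}), there is a full-measure set $\hatt\Omega$ such that for every $\omega\in\hatt\Omega$, every periodic $(b,p)$ built from $\supp\wti\mu$, and every $k$, one can find shifts $r_k$ making the entries of $T^{r_k}\omega$ approximate those of the periodic sequence on $[0,m_k]$ within $1/k$. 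The discreteness of the branching coordinate forces exact equality of $b$-entries once $k$ is large, while the continuous weight coordinate is handled by the obvious finite-range Jacobi perturbation estimate $\|(J_{T^{r_k}\omega}-J(b,p,0))\varphi_k\|\leq C\|\varphi_k\|_{\ell^\infty}/k$. Hence $\varphi_k$, transplanted as the initial data for $J_{T^{r_k}\omega}$, is an approximate Weyl sequence for $J_{T^{r_k}\omega}$ at energy $E$, so $E\in\sigma(J_{T^{r_k}\omega})\subseteq\sigma(\bbA_\omega)$ by the orthogonal decomposition.

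The main obstacle is expected to be organizational rather than conceptual. Two points demand care: (i) writing out the discrete orthogonal decomposition and the resulting Jacobi matrices with the correct boundary weight at the root of each subtree, mirroring Section~\ref{sec2.1}; and (ii) exploiting the secondary non-degeneracy hypothesis in case \eqref{516nn} of Hypothesis~\ref{hyp61} at the right moment, namely to ensure that distinct $(b,p)\in\supp\wti\mu$ actually produce distinct Jacobi coefficients $p\sqrt{b}$ after the decomposition, so that $\Sigma$ is genuinely nontrivial and Step 2 cannot collapse to a single deterministic operator. As in Theorem~\ref{prop3.1}, both inclusions are soft and require none of the Lyapunov or transfer-matrix machinery developed earlier.
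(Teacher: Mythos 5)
Your proposal is correct, but it takes a genuinely different route from the paper for the inclusion $\Sigma\subseteq\sigma(\bbA_\omega)$. You reduce everything at the outset to the half-line Jacobi matrices via the Breuer-type orthogonal decomposition (Theorem~\ref{thm52}), and then transplant Weyl sequences on $\ell^2(\bbZ_+)$ with no boundary complications — in the adjacency case the diagonal of $J(T^n b, T^n p, 0)$ vanishes identically, so the troublesome $p_{n-1}$ term in the $(0,0)$ entry of \eqref{5.5} drops out and the naive shift argument goes through. The paper instead proves Theorem~\ref{thm5.5} \emph{before} stating the Breuer decomposition and therefore works directly on the tree operators: it locates a vertex $u$ at generation $r_k-1$ with two children $v_1,v_2$, transplants the Weyl function $\varphi_k$ to the forward subtrees $T_{v_1},T_{v_2}$ with opposite signs $\pm2^{-1/2}$ (the operator $W_k$ of \eqref{new2}), and uses this antisymmetrization to kill the unwanted contribution of $\bbA_\omega W_k\varphi_k$ at the parent $u$ — this is precisely the cancellation that the Breuer decomposition performs automatically (the invariant subspaces $\cH_{n,k}$ are built from differences across sibling subtrees, hence vanish at the root). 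Both routes are valid; yours is cleaner once Theorem~\ref{thm52} is available, the paper's is self-contained and does not rely on the decomposition. For the inclusion $\sigma(\bbA_\omega)\subseteq\Sigma$ your resolvent-pairing argument is essentially equivalent to the paper's direct estimate $\|(\bbA_\omega-E)f_k\|=\|(\bbA(b^k,p^k,0)-E)f_k\|\geq C^{-1}$. One small misattribution in your final paragraph: the secondary nondegeneracy hypothesis $p\sqrt{b}\neq p'\sqrt{b'}$ in \eqref{516nn} plays no role in Theorem~\ref{thm5.5} (the almost-sure spectrum formula holds even if the Jacobi coefficients collapse); it is only needed later, for the positivity of the Lyapunov exponent in Theorem~\ref{thm:discreteFurst} and hence for localization. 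The actual reason the statement is restricted to $\bbA$ rather than $\mathbb{S}$ is the boundary issue you flag in point~(i): for $q\not\equiv0$ the $(0,0)$ entry of $J(T^n b,T^np,T^nq)$ sees $p_{n-1}$, which differs from the $p_{-1}=0$ convention of the reference periodic matrix, so the transplanting argument breaks down.
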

\begin{proof}
	First, we show that
	\begin{align}\lb{65}
	\sigma(\bbA_{\omega} )\subset \Sigma,\text{\ for all\ } \omega\in\Omega.
	\end{align}
	Seeking contradiction, we assume that $E\in\sigma(\bbA_{\omega})\setminus\Sigma$ for some $\omega\in\Omega$. Then there exist
	\begin{equation}
	\{f_k\}_{k=1}^{\infty}\subset\ell^2(\Gamma)\text{\ and\ } \{m_k\}_{k=1}^{\infty}\subset\bbN,
	\end{equation}
	such that
	\begin{align}
	&\|f_k\|_{\ell^2(\Gamma)}=1,\  \text{supp}(f_k)\subset B(o; m_k), \\
	& \|(\bbA_{\omega}-E)f_k\|_{\ell^2(\Gamma)}\rightarrow 0,\ k\rightarrow\infty.\lb{68}
	\end{align}
	where $B(o; m_k)$ denotes the ball centered at $o$ with radius $m_k$.
	The $m_k+2$-periodic sequence with the first $m_k+2$ elements given by $\omega_1,\dots, \omega_{m_{k}+1}$ is denoted by $(b^k, p^k, 0)$. Then  since $E\not\in\Sigma$ one has
	\begin{equation}
	\|(\bbA(b^k, p^k, 0)-E)^{-1}\|_{\cB(\ell^2(\Gamma))}\leq C<\infty,
	\end{equation}
	and thus for all $k$ we get
	\begin{equation}
	\|(\bbA_{\omega}-E)f_k\|_{\ell^2(\Gamma)}= \|(\bbA(b^k, p^k, 0)-E)f_k\|_{\ell^2(\Gamma)}\geq C^{-1}>0,
	\end{equation}
	which contradicts \eqref{68}.

	Next, we show
	\begin{equation}
	\Sigma\subset\sigma(\bbA_{\omega})
	\end{equation}
	for almost all $\omega$. To that end, we first notice that there exists $\hatt\Omega\subset \Omega$, $\mu(\hatt\Omega)=1$ such that for arbitrary
	\begin{equation}\lb{611}
	\omega\in\hatt\Omega,\ (b,p,0)\in\supp( \mu), \text{\ and\ } \{m_k\}_{k=1}^{\infty}\subset \bbN,
	\end{equation}
	there exists a sequence $\{r_k\}_{k=1}^{\infty}$ such that for all  $k\in\bbN$ one has
	\begin{align}
	&b_{\omega}({r_k+i})=b_i \text{\ for all\ }i\in\{0,..., m_k+1\},\lb{bran}\\
	&\max_{0\leq i\leq m_k+1}|p_{\omega}({i+r_k}) - p_i|\underset{k\rightarrow\infty}{=}o(1),\lb{new5}
	\end{align}
	see, for example,  \cite[Proposition~3.8]{Ki08}.  Pick an arbitrary periodic sequence $(b,p,0) \in \supp(\mu)$ and an arbitrary $E\in \sigma(\bbA(b, p,0))$.
	Then there exist $\{\varphi_k\}_{k=1}^{\infty}\subset\ell^2(\Gamma)$ and $\{m_k\}_{k=1}^{\infty}\subset\bbN$ such that
	\begin{align}
	&\|\varphi_k\|_{\ell^2(\Gamma)}=1,\, \supp(\varphi_k)\subset B(o; m_k), \,k\in\bbN,\lb{615} \\
	& \|(\bbA(b,p,q)-E)\varphi_k\|_{\ell^2(\Gamma)}\rightarrow 0,\ k\rightarrow\infty.\lb{616}
	\end{align}
	Given \eqref{611}--\eqref{616} we are ready to produce a Weyl sequence for $\bbA_{\omega}$.
	
	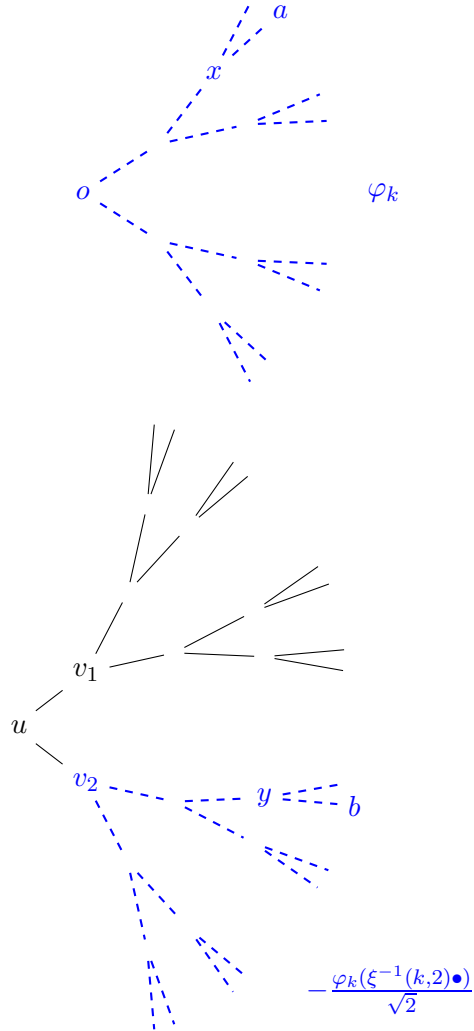
\begin{figure}
		\begin{tikzpicture}[grow cyclic, level distance=12mm]
		\tikzstyle{level 1}=[sibling angle=65]
		\tikzstyle{level 2}=[sibling angle=40]
		\tikzstyle{level 3}=[sibling angle=20]
		\tikzstyle{level 4}=[sibling angle=10]
		\node[blue] {{$o$}}
		child[blue, thick, dashed] {node {}
			child {node {}
				child {node {}}
				child {node {}}}
			child {node {}
				child {node {}}
				child {node {}}}}
		child[blue, thick, dashed] {node {}
			child {node {}
				child {node {}}
				child {node {}}}
			child {node {$x$}
				child {node {$a$}}
				child {node {}}}};
		\draw[blue](4,0)node{{$\varphi_k$}};
		\end{tikzpicture}
		
		\begin{tikzpicture}[grow cyclic, level distance=12mm]
		\tikzstyle{level 1}=[sibling angle=75]
		\tikzstyle{level 2}=[sibling angle=50]
		\tikzstyle{level 3}=[sibling angle=30]
		\tikzstyle{level 4}=[sibling angle=15]
		\node {{ $u$}}
		child{node[blue]{$v_2$} child[blue, thick, dashed] {node {}
				child[blue, thick] {node {}
					child {node {}}
					child {node {}}}
				child[blue, thick] {node {}
					child {node {}}
					child {node {}}}}
			child[blue, thick, dashed] {node{}
				child {node {}
					child {node {}}
					child {node {}}}
				child {node {$y$}
					child {node {$b$}}
					child {node {}}}}	}
		child{node{$v_1$} child {node {}
				child {node {}
					child {node {}}
					child {node {}}}
				child {node {}
					child {node {}}
					child {node {}}}}
			child {node {}
				child {node {}
					child {node {}}
					child {node {}}}
				child {node {}
					child {node {}}
					child {node {}}}}	};
		\draw[blue] (5,-3.5) node {{$-\frac{\varphi_k(\xi^{-1}(k,2)\bullet)}{\sqrt{2}}$}};
		\end{tikzpicture}
		
		\caption{Top panel: $T(k)$. Bottom panel: vertices $v_1, v_2$ in generation $r_k$ with common backward neighbor $u$, $b_{r_k}=2$. Subtree in blue (dashed) is $T(k,2)$. The isomporphism $\xi(k,2)$ maps $T(k)$ onto $T(k,2)$, in particular $o\mapsto v_2$, $x\mapsto y$, $a\mapsto b$, blue(dashed) tree in the top panel gets mapped into the blue(dashed) subtree in the bottom panel.}\lb{pic2}
	\end{figure}
	
	For a fixed $k\in\bbN$, pick two distinct vertices $v_1$, $v_2$ in generation $r_k$ with common backward neighbor $u\in\cV$ (in generation $r_k-1$), see Figure \ref{pic2}.   Then by  \eqref{bran} there exists a pair of graph isomorphisms
	\begin{equation}
	\xi(k, i): B(o;m_k+1)\rightarrow T_{v_i}\cap B(v_i;m_k+1).
	\end{equation}
	We notice that
	\begin{equation}\lb{new1}
	\xi(k, i)(o)=v_i, i=1,2, k\in\bbN.
	\end{equation}
	For brevity, we denote
	\begin{equation}\lb{new10}
	T(k):=\Gamma\cap B(o;m_k+1),\  T(k,i):=T_{v_i}\cap B(v_i;m_k+1).
	\end{equation}
	Let us define
	\begin{equation}\lb{new2}
	(W_k \varphi)(x):=\begin{cases}
	2^{-1/2}\varphi(\xi^{-1}(k,1)x),\ & x\in T(k,1),\\
	-2^{-1/2}\varphi(\xi^{-1}(k,2)x),\ & x\in T(k,2),\\
	0, &\text{otherwise}
	\end{cases}
	\end{equation}
for $\varphi \in \ell^2(\cV)$ which is supported on $B(o,m_k+1)$. We claim that $\{W_k\varphi_k\}_{k\geq 1}$ is a Weyl sequence for $\bbA_{\omega}$, $\omega\in \hatt \Omega$. To that end, let us first notice
	\begin{align}
	&\left|\|(\bbA(b,p,0)-E)\varphi_k\|_{\ell^2(\Gamma)}-\| (\bbA_{\omega}-E)W_k\varphi_k\|_{\ell^2(\Gamma)} \right|\\
	&\quad=\left|\|W_k(\bbA(b,p,0)-E)\varphi_k\|_{\ell^2(\Gamma)}-\| (\bbA_{\omega}-E)W_k\varphi_k\|_{\ell^2(\Gamma)} \right|\\
	&\quad\leq \|W_k(\bbA(b,p,0)-E)\varphi_k-(\bbA_{\omega}-E)W_k\varphi_k\|_{\ell^2(\Gamma)}\\
	&\quad=\|W_k\bbA(b,p,0)\varphi_k-\bbA_{\omega}W_k\varphi_k\|_{\ell^2(\Gamma)},\lb{new4}
	\end{align}
	where we used $\|(\bbA(b,p,0)-E)\varphi_k\|_{\ell^2(\Gamma)}=\|W_k(\bbA(b,p,0)-E)\varphi_k\|_{\ell^2(\Gamma)}$ which follows from the definition of $W_k$. Next, recalling \eqref{new1} and the fact that $u$ is the common backward neighbor of $v_1$, $v_2$ we get
	\begin{align}
	(\bbA_{\omega}W_k\varphi_k)(u)&=p_{\omega}(u, v_1)[W_k\varphi_k](v_1)+p_{\omega}(u, v_2)[W_k\varphi_k](v_2)\\
	&=\frac{p_{\omega}(u, v_1)\varphi_k(o)-p_{\omega}(u, v_2)\varphi_k(o)}{\sqrt2}=0,
	\end{align}
	since $p_{\omega}(u, v_1) = p_{\omega}(u, v_2)$. Further, one has
	\begin{equation}\lb{new3}
	W_k(\bbA(b,p,q)\varphi_k)(u)=0=(\bbA_{\omega}W_k\varphi_k)(u),
	\end{equation}
	where the first equality follows from \eqref{new2}. Next, let us fix $i=1,2$, $k\in\bbN$ and use the shorthand $\xi_k:=\xi(k,i)$. For $y\in T(k,i)$ let $ x:=\xi_k^{-1}(y)$, see Figure \ref{pic2}, then one has
	\begin{align}
	&W_k(\bbA(b,p,0)\varphi_k)(y)-[\bbA_{\omega}(W_k\varphi_k)](y)\lb{newnew8}\\
	&=\frac{1}{\sqrt{2}}(\bbA(b,p,0)\varphi_k)(x)-[\bbA_{\omega}(W_k\varphi_k)](y)\\
	&=-\frac{1}{\sqrt{2}}\sum_{a \sim x} p(x,a)\varphi_k(a)+\sum_{b \sim y} p_{\omega}(y,b)(W_k\varphi_k)(b)\lb{ur14new}\\
	&=-\frac{1}{\sqrt{2}}\Big(\sum_{a \sim x} p(x,a)\varphi_k(a) - \sum_{b \sim \xi_k(x)} p_{\omega}(\xi_k(x),b)  \varphi_k(\xi_k^{-1} b)\Big)\lb{ur14}.
	\end{align}
	Let us point out that $\xi_k^{-1}(b)$ is not defined if $b\not\in T_{v_1}\cup T_{v_2}$. However, one does have $W_k\varphi_k(b)=0$ and therefore the equality in \eqref{ur14} holds with
	\begin{equation}\lb{ur523n}
	\varphi_k(\xi_k^{-1}(b)):=W_k\varphi_k(b)=0.
	\end{equation}
	Moreover, combining this and \eqref{bran} we obtain
	\begin{align}
	\begin{split}\lb{ur523}
	&\sum_{a \sim x}p(x,a)\varphi_k(a)-\sum_{b \sim y}p_{\omega}(y,b)\varphi_k(\xi_k^{-1}(b))\\
	&\qquad=\sum_{a \sim x}(p(x,a)-p_{\omega}(\xi_k(x),\xi_k(a)))\varphi_k(a).
	\end{split}
	\end{align}

	Given \eqref{ur523n} and \eqref{ur523} we are ready to continue  \eqref{newnew8}--\eqref{ur14}. Changing variables via $b=\xi_k(a)$, we get
	\begin{align}
	\begin{split}
	&W_k(\bbA(b,p,0)\varphi_k)(y)-[\bbA_{\omega}(W_k\varphi_k)](y)\lb{new8}\\
	&=-\frac{1}{\sqrt{2}}\Big(\sum_{a \sim x}[ p(x,a)-  p_{\omega}(\xi_k(y),\xi_k(a))] \varphi_k(a)\Big),
	\end{split}
	\end{align}
	where we made a change of variable $b=\xi_k(a)$. Furthermore we note that \eqref{new8} holds for $y\in\Gamma\setminus(T(k,1)\cup T(k,2))$ trivially, i.e., both sides are equal to zero. Recalling $T(k)$ from \eqref{new10} and using \eqref{new5} yield
	\begin{equation}\lb{new9}
	c(k):=\max\limits_{x\in T(k), x \sim a}| p(x,a)-  p_{\omega}(\xi_k(x),\xi_k(a))|^2\underset{k\rightarrow\infty}{=}o(1).
	\end{equation}
	Then combining \eqref{new3}, \eqref{new8}, and  \eqref{new9}, we obtain
	\begin{align}
	&\|W_k\bbA(b,p,0)\varphi_k-\bbA_{\omega}W_k\varphi_k\|_{\ell^2(\Gamma)}^2\\
	&\quad = \sum_{y\in \Gamma}|W_k(\bbA(b,p,0)\varphi_k)(y)-[\bbA_{\omega}(W_k\varphi_k)](y)|^2\\
	&\quad=  \sum_{x\in T(k)} \big|\sum_{x \sim a} [p(x,a)- p_{\omega}(\xi_k(x),\xi_k(a))]\varphi_k(a)\,\big|^2\\
	&\quad\leq c(k)C(b^+)\|\varphi_k\|^2_{\ell^2(\Gamma)}\underset{k\rightarrow\infty}{=}o(1),
	\end{align}
	where  $C(b^+)>0$ is some fixed constant.  Therefore, we get
	\begin{equation}
	\left|\|(\bbA(b,p,0)-E)\varphi_k\|_{\ell^2(\Gamma)}-\| (\bbA_{\omega}-E)W_k\varphi_k\|_{\ell^2(\Gamma)} \right|\underset{k\rightarrow\infty}{=}o(1).
	\end{equation}
	Thus $\{W_k\varphi_k\}_{k\geq 1}$ is a Weyl sequence for $\bbA_{\omega}$ and $E\in \sigma(\bbA_{\omega})$ as asserted.
	
%	To prove \eqref{6.4} we observe that
%	\begin{align}
%	\Sigma_{\mathrm{RSO}}=\overline{\bigcup_{q\ \text{ periodic}} \sigma(\bbA(d,1,q))}&\subset\left[-2\sqrt{d}, 2\sqrt{d}\right]+(d+1)P_3\supp\{\wti\mu \}\\
%	&=\overline{\bigcup_{q\ \text{ constant}} \sigma(J(d,1,q))}\subset\Sigma_{\mathrm{RSO}}.
%	\end{align}
\end{proof}
\begin{remark}
\begin{enumerate}
\item We emphasize that the equality in \eqref{ur523} requires special attention if $y\in\partial( T(k,i))$, since in this case the inclusion
	\begin{equation}
	\xi_k(\{a\in\cV: a\sim x\})\subset \{b\in\cV: b\sim y\},
	\end{equation}
	could be strict. However, by \eqref{ur523n} the equality \eqref{ur523} holds as asserted even in this special case. Due to this nuance the current proof is not applicable to $\bbJ=\mathbb S$. (Informally, if $q\not=0$ in \eqref{5.1} then we ``see" extra bits around $v_i$ which are not observed near $o$).

\item The almost-sure spectrum $\Sigma$ for $\bbA_{\omega}=\bbA (b_{\omega}, 1,0)$ can be computed explicitly if $p\equiv1$,  $q\equiv0$, i.e. the random branching model for the adjacency matrix. Indeed, in this case, the quadratic form $\mathfrak{a}$ of the $\bbA$ is given by
\begin{equation}
\mathfrak{a}[\varphi, \varphi]=-\sum_{u\sim v} \varphi(u)\overline{\varphi(v)}, \varphi\in\ell^2(\Gamma).
\end{equation}
therefore
\begin{equation}
\|\bbA (b, 1,0)\|_{\cB(\ell^2(\Gamma))}\leq \|\bbA (\wti b, 1,0)\|_{\cB(\ell^2(\Gamma))},
\end{equation}
where $\wti b:= \max \{P_1\supp \mu\}$ and $P_1$ is the first coordinate function. Combining this and \eqref{6.2} we get
\begin{align}
\Sigma=\overline{\bigcup_{b\ \text{ periodic}} \sigma(\bbA(b,1,0))}&\subset[- \|\bbA (\wti b, 1,0)\|_{\cB(\ell^2(\Gamma))},  \|\bbA (\wti b, 1,0)\|_{\cB(\ell^2(\Gamma))}]\\
&=[-2\sqrt{\wti b}, 2\sqrt{\wti b}]\subset\Sigma.
\end{align}
As before, we note that this proof is not applicable to the case $q\not\equiv 0$ or $p\not\equiv$ const.

\item Remark~\ref{rem5.3}, the proof of Theorem~\ref{thm5.5}, the previous remark, and the question of computing the almost--sure spectrum itself illustrate a subtle distinction between adjacency matrices and  Schr\"odinger operators. This issue arises even in the most simple case $\Gamma=\bbZ_+$, $p\equiv 1$, and random $q$, since (in view of \eqref{int19})
\[
\mathbb{S}
=
\begin{bmatrix}
q(1) & -1 \\
-1 & 2q(2) & -1 \\
 & -1 & 2q(3) & -1 \\
 && \ddots & \ddots & \ddots
\end{bmatrix}.
\]
To be more specific, if one considers
\[
\wti{\mathbb{S}}
=
\begin{bmatrix}
2q(1) & -1 \\
-1 & 2q(2) & -1 \\
 & -1 & 2q(3) & -1 \\
 && \ddots & \ddots & \ddots
\end{bmatrix}
\]
where $\{q(n)\}$ is a sequence of i.i.d.\ random variables, then it is well-known that the spectrum of $\wti{\mathbb{S}}$ is almost surely given by $[-2,2] + 2 \supp\{q\}$. Since $\mathbb{S}$ is a rank-one perturbation of $\wti{\mathbb{S}}$, their essential spectra coincide. However, depending on the support of $q$, it can happen that $\mathbb S$ may have discrete eigenvalues outside of $\sigma_{\mathrm{ess}}(\wti{\mathbb{S}})$, and these eigenvalues may not be constant almost-surely. Thus, one should not expect the analogue of Theorem~\ref{thm5.5} to hold for random Schr\"odinger operators on graphs (as opposed to adjacency matrices).
\end{enumerate}

\end{remark}
\subsection{Breuer-Type Decomposition}\lb{sec5.1}
Our next objective is to revise the Breuer decomposition \cite[Theorem~2.4]{Br07} which may be viewed as a discrete version of the orthogonal decomposition of {\it metric} trees. To point out a difference between the two, we note: The invariant subspaces in  \eqref{212new} are parametrized by single vertices, while those in Breuer's decomposition are parametrized by entire generations of vertices.
\begin{theorem}\lb{thm52}
Assume Hypothesis~\ref{hyp5.1}. Then there exists a unitary operator
\begin{align}\lb{a7}
\Phi_b:  \ell^2(\cV)\rightarrow \bigoplus_{n=0}^{\infty} \bigoplus_{k=1}^{m(n)} \ell^2(\bbZ_+),
\end{align}
such that
\begin{equation}\lb{a8}
\Phi_b\, \bbJ({b, p, q}) \Phi_b^{-1}=\bigoplus_{n=0}^{\infty}\bigoplus_{k=1}^{m(n)}  J(T^nb,T^np,T^nq),
\end{equation}
where $m(n):=b_0\cdot b_1\cdots\cdot b_{n-1}\cdot(b_{n}-1)$, $n\in\bbZ_+$, and  $J(b,p,q)$ denotes the Jacobi matrix acting in $\ell^2(\bbZ_+)$ and given by
\begin{align}\lb{5.5}
&J(b,p,q):=\begin{pmatrix}
(b_{0} p_{0}+ p_{-1})q_{0}&\sqrt{b_{0}} p_{0}&0&\ \\
\sqrt{b_{0}} p_{0}&(b_{1} p_{1}+ p_{0})q_{1}&\sqrt{b_{1}} p_{1}&\ddots \\
0&\sqrt{b_{1}} p_{1}&\ddots&\ddots \\
\ &\ddots&\ddots&\ddots\\
\end{pmatrix}.\
\end{align}
\end{theorem}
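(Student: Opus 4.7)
The plan is to construct $\Phi_b$ explicitly by mimicking the orthogonal decomposition for metric radial trees in Section~\ref{sec2.1}, replacing edge-based formulas by their combinatorial analogues. For each vertex $v \in \cV$, enumerate the children of $v$ as $v_1,\ldots,v_{b_{\gen(v)}}$, let $T_v(j)$ denote the subtree rooted at $v_j$, and for each $k \in \{1,\ldots,b_{\gen(v)}-1\}$ define an isometric embedding $\cU_{v,k}^{-1}\colon \ell^2(\bbZ_+) \to \ell^2(\cV)$ by
\[
(\cU_{v,k}^{-1}g)(x) = \begin{cases}
\dfrac{\exp(2\pi\bfi jk/b_{\gen(v)})}{\sqrt{w_v(|x|-\gen(v))}}\,g(|x|-\gen(v)-1), & x \in T_v(j),\ |x|>\gen(v), \\
0, & \text{otherwise}.
\end{cases}
\]
Let $\cL_{v,k}$ be the image of $\cU_{v,k}^{-1}$, and define the radial subspace $\cL_o$ as the image of the isometry $(\cU_o^{-1}g)(x) = g(|x|)/\sqrt{w_o(|x|)}$. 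That these maps are isometric is immediate from $|\{x \in T_v : |x|-\gen(v) = r\}| = w_v(r)$ and $|\exp(2\pi\bfi jk/b_{\gen(v)})| = 1$.

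Next I would verify orthogonality and $\bbJ$-invariance. Orthogonality of $\cL_{v,k}$ and $\cL_{v',k'}$ with a common $v = v'$ follows from orthogonality of distinct characters in $\bbC^{b_{\gen(v)}}$; for nested $v \subsetneq v'$, it reduces to $\sum_{j=1}^{b_{\gen(v)}}\exp(2\pi\bfi jk/b_{\gen(v)}) = 0$ when $k \ne 0$; and for unrelated $v, v'$, the supports are disjoint. The argument for orthogonality with $\cL_o$ is analogous. For $\bbJ$-invariance of $\cL_{v,k}$, apply \eqref{5.1} to $f = \cU_{v,k}^{-1} g$: at $v$ itself the character-sum cancellation forces $(\bbJ f)(v) = 0$, and at each child $v_i$ a direct computation---tracking the single backward neighbor $v$ (carrying the edge weight $p_{\gen(v)}$) and the $b_{\gen(v)+1}$ forward neighbors---yields
\[
(\bbJ f)(v_i) = \frac{\exp(2\pi\bfi ik/b_{\gen(v)})}{\sqrt{b_{\gen(v)}}}\bigl[(p_{\gen(v)} + b_{\gen(v)+1} p_{\gen(v)+1}) q_{\gen(v)+1}\,g(0) - \sqrt{b_{\gen(v)+1}}\,p_{\gen(v)+1}\,g(1)\bigr].
\]
Propagating the same calculation to deeper descendants of $v$ shows that $\bbJ|_{\cL_{v,k}}$ is unitarily equivalent to $J(T^{\gen(v)+1} b, T^{\gen(v)+1} p, T^{\gen(v)+1} q)$ on $\ell^2(\bbZ_+)$, after absorbing a uniform sign on the off-diagonals by $\diag((-1)^m)$-conjugation. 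An analogous calculation on $\cL_o$ identifies $\bbJ|_{\cL_o}$ with $J(b,p,q)$.

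The main obstacle will be completeness: showing $\overline{\bigoplus_{v,k}\cL_{v,k} \oplus \cL_o} = \ell^2(\cV)$. I would prove this inductively on generations. The $w_o(n)$-dimensional space spanned by $\{\delta_x : \gen(x) = n\}$ decomposes naturally according to the parent of each $x$: for each $u$ at generation $n-1$, the $b_{n-1}$-dimensional space spanned by the deltas of $u$'s children is resolved by the discrete Fourier transform on $\bbZ/b_{n-1}\bbZ$ into one trivial-character direction (absorbed, via a telescoping argument, into the closed span of $\cL_o$ together with the ancestor subspaces $\cL_{u',k}$ with $\gen(u') < n-1$) plus $b_{n-1}-1$ non-trivial-character directions, each lying in some $\cL_{u,k}$. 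Iterating over all generations exhausts $\ell^2(\cV)$. Finally, grouping the resulting subspaces by the shift-index of the associated Jacobi matrix---one copy of $J(b,p,q)$ from $\cL_o$, and $w_o(n-1)(b_{n-1}-1)$ copies of $J(T^{n} b, T^{n} p, T^{n} q)$ from the $\cL_{v,k}$ attached to vertices $v$ at generation $n-1$---and relabeling the direct sum in accordance with \eqref{213nn} produces \eqref{a7}--\eqref{a8} with multiplicities $m(n)$ as stated.
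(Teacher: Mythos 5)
Your proposal is correct, but it takes a genuinely different route from the paper. The paper's proof is essentially a citation: it invokes Breuer's inductive construction \cite[Theorem~2.4]{Br07} to produce, as a black box, the orthonormal basis $\varphi_{n,k,j}$ with the support and averaging properties \eqref{56new}--\eqref{57nn}, observes that $\bbJ$ acts on each generation-indexed subspace $\cH_{n,k}$ by the three-term recursion \eqref{5.7}, and reads off the Jacobi matrix \eqref{5.5}. You instead build the decomposition from scratch by transplanting the vertex-indexed character decomposition \eqref{29nn} from the metric setting, which is precisely the construction the paper contrasts with Breuer's in Section~\ref{sec5.1} (vertex-parametrized versus generation-parametrized invariant subspaces); after regrouping the $\cL_{v,k}$ by $\gen(v)$ the two decompositions coincide, so your argument in effect supplies a self-contained proof of the Breuer input and makes the $\supp\psi\subset T_v$ structure (used later in the proof of Theorem~\ref{thm5.6}) manifest. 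Your verification of invariance at $v$ and at its children is right, and you are correct that the raw computation produces $-\sqrt{b_j}\,p_j$ off the diagonal, which must be conjugated by $\diag((-1)^m)$ to match \eqref{5.5} --- a point the paper glosses over in \eqref{5.7}. Two small imprecisions, neither fatal: in the nested-vertices orthogonality it is the character sum at the \emph{descendant} vertex (the one indexing the inner subspace) that vanishes, since the ambient function is radial on that subtree; and the multiplicity you actually compute for the shift-$n$ block, namely $1$ for $n=0$ and $w_o(n-1)(b_{n-1}-1)=b_0\cdots b_{n-2}(b_{n-1}-1)$ for $n\ge1$, is the count that correctly telescopes to $w_o(N)$ dimensions per generation; it agrees with the displayed formula for $m(n)$ only after an index shift, so you should state your own multiplicities rather than asserting they match \eqref{213nn} verbatim. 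The completeness step is only sketched ("telescoping"), but the dimension count per generation closes the induction, so the gap is routine.
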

\begin{proof}
Breuer's inductive procedure \cite[Theorem~2.4]{Br07} yields an orthonormal basis
 \begin{equation}\no
 \{\varphi_{n,k,j}: n\in\bbZ_+, 1\leq k\leq m(n), j\in\bbZ_+\}\subset \ell^2(\cV).
 \end{equation}
For all admissible triples $n, k, j$, the basis elements satisfy
\begin{align}
&\supp(\varphi_{n,k,j})\subset \{ v\in\cV: \text{gen}(v)=n+j\},\lb{56new}\\
&\varphi_{n,k,j+1}(v)=\begin{cases}
\frac{\varphi_{n,k,j}(u)}{\sqrt{b_{n+j}}},& u\sim v,\ \gen(v)=\gen(u)+1,\\
0,&\text{otherwise},
\end{cases}\lb{57nn}
\end{align}
and
\begin{equation}\lb{5.7}
\bbJ(b, p, q)\varphi_{n,k,{ j}}=
\begin{cases}
\sqrt{b_{n+j-1}}\, p_{n+j-1}\varphi_{n,k,{ j-1}}&\\
\hspace{5mm}+(b_{n+j} p_{n+j}+ p_{n+j-1})q_{n+j}\varphi_{n,k, { j}}&\\
\hspace{35mm}+\sqrt{b_{n+j}}\, p_{n+j}\varphi_{n,k,{ j+1}},&  j\geq 1,\\
(b_{n} p_{n}+ p_{n-1})q_{n}\varphi_{n,k,0} +\sqrt{b_{n}}\, p_{n}\varphi_{n,k,1},& j=0.
\end{cases}
\end{equation}
The latter shows that the operator $\bbJ(b,p,q)$ leaves the  subspaces
\begin{equation}
\cH_{n,k}:=\overline{\text{span}\{\varphi_{n,k,j}: j\in\bbZ_+\}}\subset\ell^2(\cV)
\end{equation}
invariant. Thus we have
\begin{equation}\lb{a6}
\ell^2(\cV)=\bigoplus_{n=0}^{\infty} \bigoplus_{k=1}^{m(n)} \cH_{n,k},\  \bbJ(b, q, w)P_{\cH_{n,k}}
=J(T^n b, T^np, T^n q),
\end{equation}
where $P_{\cH_{n,k}}$ denotes an orthogonal projection onto $\cH_{n,k}$ in $\ell^2(\cV)$. Let us define  unitary operators	
\begin{align}
&\cU_{n,k}: \cH_{n,k}\rightarrow \ell^2(\bbZ_+),\  n\in\bbZ_+, 1\leq k\leq m(n),\ \\
& \cU_{n,k} \varphi_{n,k,j}:= \delta_j,\  j\in\bbZ_+.
\end{align}
and
\begin{equation}
\Phi_b:= \bigoplus_{n=0}^{\infty} \bigoplus_{k=1}^{m(n)} \cU_{n,k}.
\end{equation}
Then \eqref{5.7} together with \eqref{a6} yield \eqref{a8} and \eqref{5.5} as asserted.
\end{proof}

\subsection{Dynamical and Exponential Localization for Discrete Random Trees}\lb{dynexp}
	
In this section we discuss spectral and dynamical localization for three discrete models: the random branching model (RBM), the random weights (RWM) model, and random Schr\"odinger operators (RSO).

Let us denote the nonzero entries of $J(b,p,q)$ by
\begin{align}
\begin{split}\lb{albet}
& \beta_{j}=\beta_j(b,p,q)=(b_{j} p_{j}+ p_{j-1})q_{j},\\
&\alpha_{j}=\alpha_j(b,p)=\sqrt{b_{j}} p_{j},\, j\in\bbZ_+.
\end{split}
\end{align}
Then a sequence $u=\{u_j\}_{j=0}^{\infty}$ satisfies $J(b,p,q)u=Eu$, $E\in\bbR$, that is,
\begin{equation}
\begin{cases}
\alpha_{j-1}u_{j-1}+(\beta_{j}-E)u_j+\alpha_{j}u_{j+1}=0,\ j\in\bbN, \lb{a9}\\
(\beta_{0}-E)u_0+\alpha_{0}u_1=0,
\end{cases}
\end{equation}
if and only if
\begin{equation}\lb{a11}
\begin{bmatrix}
u_{j+1}\\\alpha_{j}u_j
\end{bmatrix}=
M^{E, j}(b,p,q)
\begin{bmatrix}
u_{j}\\\alpha_{j-1}u_{j-1}
\end{bmatrix}, \text{\ for all } j\in\bbN.
\end{equation}
where
\begin{align}
\begin{split}\lb{a10}
M^{E,j}(b,p,q):&=\frac{1}{\alpha_{j}}\begin{bmatrix}
E-\beta_{j}&-1\\
\alpha^2_{j}&0
\end{bmatrix}\\
&=\begin{bmatrix}
\frac{E-(b_{j} p_{j}+ p_{j-1})q_{j}}{\sqrt{b_{j}} p_{j}}&-\frac{1}{\sqrt{b_{j}} p_{j}}\\
\sqrt{b_{j}} p_{j}&0
\end{bmatrix}.
\end{split}
\end{align}
The transfer matrix \eqref{a10} gives rise to an SL$(2,\bbR)$-cocycle
\begin{align}\no
&(T,M^E): \Omega\times\bbR^2\rightarrow  \Omega\times\bbR^2,\ (T,M^{ E})(\omega, v)=(T\omega, M^E(\omega)v),
\end{align}
where $M^E:\Omega\rightarrow \text{SL}(2,\bbR)$ and
\begin{align}
&M^E(\omega)=\begin{bmatrix}
\frac{E-(b_{\omega}(0)p_{\omega}(0)+ p_{\omega}(-1))q_{\omega}(0)}{\sqrt{b_{\omega}(0)} p_{\omega}(0)}&-\frac{1}{\sqrt{b_{\omega}(0)} p_{\omega}(0)}\\
\sqrt{b_{\omega}(0)} p_{\omega}(0)&0
\end{bmatrix}.
\end{align}
The $n$-step transfer matrix $M^{E}_n(\omega)$ and the Lyapunov exponent are defined as in \eqref{tm} and \eqref{LE1} respectively.

\begin{theorem}\lb{thm:discreteFurst}
		Assume Hypothesis~\ref{hyp61}. Then there is a  set $\mathcal{D} \subseteq \bbR$ of cardinality at most one such that $G = G_{\nu(E)}$ enjoys the following properties for $E \in \bbR \setminus \mathcal{D}$.
		\begin{enumerate}
		\item[{\rm(i)}] $G$ is noncompact
		\item[{\rm(ii)}] $G$ is strongly irreducible
		\item[{\rm(iii)}] $G$ is contracting {\rm(}cf.\ \cite[Definition~2.8]{BuDaFi}{\rm)}
		\item[{\rm(iv)}] $\mathrm{Fix}(G) = \emptyset$
		\end{enumerate}
In particular, $L$ is continuous and positive on $\bbR \setminus \mathcal{D}$.
	\end{theorem}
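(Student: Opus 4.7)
The plan is to mirror the proof of Theorem~\ref{thm34}, replacing the continuum transfer matrix \eqref{eq:MEdef} with the discrete Jacobi transfer matrix of \eqref{a10}, and then to refine the resulting discreteness statement to the sharp bound $|\cD|\leq 1$. Fix two distinct elements $\alpha^{(1)}=(b_1,p_1,q_1)$ and $\alpha^{(2)}=(b_2,p_2,q_2)$ of $\supp\wti\mu$ and set $M_j(E):=M^{E,0}(\alpha^{(j)})$, viewed as $\SL(2,\bbR)$-valued analytic functions of $E$. A direct inspection shows that $\tr M_j(E)=(E-\beta_j)/\alpha_j$ is affine and non-constant in $E$, and that the entries of $M_j(E)$ are real whenever $|\tr M_j(E)|\leq 2$. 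These are precisely the hypotheses of \cite[Theorem~2.1]{BuDaFi2}, which will yield items (i)-(iii) on the complement of a discrete set as soon as we show that the commutator $g(E):=[M_1(E),M_2(E)]=M_1M_2-M_2M_1$ does not vanish identically. Continuity and positivity of $L$ on $\bbR\setminus\cD$ will then follow from Theorem~\ref{a16} exactly as in the proof of Theorem~\ref{thm34}, since (ii) implies (iv).

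To verify $g\not\equiv 0$, abbreviate $s_j=\sqrt{b_j}p_j$ and $c_j=(b_j p_j+p_{j-1})q_j$; a short computation gives
\[
g(E)=\frac{1}{s_1 s_2}\begin{bmatrix} s_1^2-s_2^2 & c_1-c_2 \\ (s_1^2-s_2^2)E-(s_1^2 c_2-s_2^2 c_1) & -(s_1^2-s_2^2) \end{bmatrix}.
\]
Under the scenario \eqref{515nn} we have $p\equiv 1$, $s_j=\sqrt{b_j}$, $c_j=(b_j+1)q_j$: if $b_1\neq b_2$ then the diagonal entries of $g$ are a nonzero constant, and if $b_1=b_2$ while $q_1\neq q_2$ then the $(1,2)$-entry is a nonzero constant. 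Under the scenario \eqref{516nn} we have $c_j=0$ and $s_1\neq s_2$ by hypothesis, so
\[
g(E)=\frac{s_1^2-s_2^2}{s_1 s_2}\begin{bmatrix} 1 & 0 \\ E & -1 \end{bmatrix}\not\equiv 0.
\]
In either case $g\not\equiv 0$, and \cite[Theorem~2.1]{BuDaFi2} delivers a discrete exceptional set, together with the continuity and positivity of $L$ outside it.

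The main work is to upgrade ``discrete'' to ``cardinality at most one''. The plan here is a direct algebraic analysis of when the Furstenberg hypotheses can fail at a single energy. Any failure of (i)-(iv) at an exceptional $E$ forces $M_1(E)$ and $M_2(E)$ to share an invariant finite subset of $\bbR\bbP^1$, hence a common pair of (possibly complex-conjugate) invariant directions. Writing a common invariant direction as $[1:y]^\top$, the eigen-relation $M_j[1:y]^\top=\lambda_j[1:y]^\top$ forces $y=s_j/\lambda_j$ and $\lambda_j^2-(E-c_j)\lambda_j/s_j+1=0$; combining the equality $s_1/\lambda_1=s_2/\lambda_2$ for both directions with these two characteristic relations yields a single polynomial equation in $E$. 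In scenario \eqref{516nn}, this equation forces $E=0$, and one checks directly that at $E=0$ the matrices $M_j(0)$ satisfy $M_j(0)^2=-I$ and share the coordinate axes $\{[1:0],[0:1]\}$ as an invariant pair, so $\cD=\{0\}$ in this case. In scenario \eqref{515nn}, the analogous reduction identifies a single candidate exceptional energy (depending on $b_j,q_j$), completing the claim $|\cD|\leq 1$.

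The main obstacle is this final refinement: the abstract machinery of \cite[Theorem~2.1]{BuDaFi2} only yields discreteness, and obtaining the sharp bound $|\cD|\leq 1$ requires the explicit eigen-analysis sketched above, carried out separately for the two scenarios of Hypothesis~\ref{hyp61}. Once this is done, the remaining conclusions---continuity and positivity of $L$ on $\bbR\setminus\cD$---are immediate from Theorem~\ref{a16}, since (ii) rules out a common fixed direction on $\bbR\bbP^1$ off of $\cD$.
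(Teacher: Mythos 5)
Your reduction to \cite[Theorem~2.1]{BuDaFi2} for the ``discrete exceptional set'' part is fine and matches the paper's opening remark: the trace is affine and non-constant, the entries are real, and your computation of the commutator $g$ (and the verification that it does not vanish identically in either scenario of Hypothesis~\ref{hyp61}) is correct. The problem is in the refinement to $\#\cD\le 1$, which you correctly identify as the main work but whose mechanism is flawed.

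A failure of (i)--(iv) means there is a finite invariant set $\cF\subseteq\bbR\bbP^1$ with $\#\cF\in\{1,2\}$; when $\#\cF=2$, each $M_j$ either fixes both directions or \emph{swaps} them, and in the swap case neither direction is an eigenvector of $M_j$ (indeed $\tr M_j=0$ then). Your analysis imposes the eigen-relation $M_j[1:y]^\top=\lambda_j[1:y]^\top$ on every direction of $\cF$ for both $j$, which only covers the common-eigenvector configuration, equivalently $\det g(E)=0$; since $\det g$ is a nontrivial affine function of $E$ here, that configuration indeed occurs at no more than one energy. But the swap configuration is precisely what produces the exceptional energy in the cases your equation cannot see: under \eqref{516nn}, and under \eqref{515nn} with $(b_1+1)q_1=(b_2+1)q_2$, one has $\det g(E)\neq 0$ for \emph{all} $E$ (no common eigenvector ever), yet at the energy where both traces vanish the matrices are anti-diagonal and preserve $\{\mathrm{span}(\vec e_1),\mathrm{span}(\vec e_2)\}$ by swapping, so strong irreducibility genuinely fails there (and in fact $L=0$ there, per the paper's remark). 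Your own observation that $M_j(0)^2=-I$ in scenario \eqref{516nn} is an instance of this, but it is inconsistent with your claim that the eigen-relation ``forces $E=0$'' --- the eigen-relation has no solutions at all in that scenario. Symmetrically, in the cases where $\det g$ does vanish at some $E_1$, proving $\cD\subseteq\{E_1\}$ still requires ruling out swap-invariant pairs at all other energies; the paper does this via a trace argument ($\tr M_1=0$ forces $\tr M_2\neq0$, hence $M_2$ fixes both directions, and then $z_1z_2=1/b_2$ contradicts $M_1z_1=-1/(b_1z_1)$), and in scenario \eqref{516nn} via the hyperbolic diagonal matrix $A=M_1M_2^{-1}$, which pins any invariant set inside the coordinate axes. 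Without an argument of this kind for the permutation case your proof establishes only ``$\cD$ discrete,'' not ``$\#\cD\le1$,'' and in Case \eqref{516nn} it would wrongly suggest $\cD=\emptyset$ from the eigenvector count alone.
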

	
	\begin{proof}
	Following the proof of Theorem~\ref{thm34}, we choose
\begin{equation}
(b_1,p_1,q_1) \neq (b_2,p_2,q_2) \in \supp\wti\mu,
\end{equation}
 let $M_j(E)$ denote the transfer matrix corresponding to $(b_j,p_j,q_j)$, and form the matrices $A = M_1M_2^{-1}$ and $g = [M_1,M_2]$. Let us comment briefly on the method of proof. We can immediately apply \cite{BuDaFi2} to deduce that there is an unspecified discrete set of energies away from which (i)--(iv) hold.  In fact, the argument of \cite{BuDaFi2} applies away from energies at which $\tr M_j(E) = 0$ or $\det g(E) = 0$, which allows us to refine this to a discrete set with no more than 3 elements. However, we can do better still: Conditions (i)--(iv) hold for any $E$ for which the following criterion is met:
 \begin{equation} \label{eq:furstRefined}
\not\exists  \mathcal{F} \subseteq \bbR\bbP^1 \text{ with } \#\mathcal F \in \{1,2\} \text{ such that } M_j \mathcal{F} = \mathcal{F} \text{ for } j=1,2.
  \end{equation}
In particular, \eqref{eq:furstRefined} implies (iii) which in turn implies (i) by standard arguments about $\SL(2,\bbR)$. Once (i) holds, then \eqref{eq:furstRefined} immediately yields (iv) and also implies (ii) (cf.\ \cite{bougerollacroix}).
\medskip

\noindent\textbf{\boldmath Case 1: \eqref{515nn} holds.} We have $p_1=p_2=1$, so
\[
M_j
=
\frac{1}{\sqrt{b_j}}\begin{bmatrix} E - (b_j+1)q_j & -1 \\ b_j & 0 \end{bmatrix}.
\]
We calculate \footnotesize
\[
g
=
\frac{1}{\sqrt{b_1b_2} }\begin{bmatrix} b_1 - b_2 & (b_1+1)q_1 - (b_2+1)q_2 \\
(b_1-b_2)E + b_2(b_1+1)q_1 - b_1(b_2+1)q_2 & b_2-b_1  \end{bmatrix}.
\]\normalsize

\noindent \textbf{\boldmath Case 1a: $b_1 = b_2$.} It follows that $q_1 \neq q_2$ and hence $(b_1+1)q_1 \neq (b_2+1) q_2$. One can confirm that $\det g(E) \neq 0$ for all $E$, so that $M_1$ and $M_2$ have no eigenvectors in common. Thus, there is no $\mathcal{F}$ of cardinality one with $M_j \mathcal F = \mathcal F$ for $j=1,2$. Now, suppose that an invariant $\mathcal F \subseteq \bbR\bbP^1$ of cardinality two exists. We must then have have $\mathcal{F} = \{\bar u_1, \bar u_2\}$ and $M_j \bar u_1= \bar u_2$, $M_j \bar u_2 = \bar u_1$ for some $j$; without loss, assume $j=1$. This forces $\tr M_1 = 0$. However, since $(b_1+1)q_1 \neq (b_2+1)q_2$, we must have $\tr M_{2} \neq 0$, so $M_2 \mathcal F = \mathcal F$ forces $M_2 \bar u_k = \bar u_k$ for $k=1,2$, that is to say, each $\bar u_k$ is an eigendirection of $M_2$. Identifying $\bbC\bbP^1$ with the Riemann sphere in the usual way, write $z_k$ for the image of $\bar{u}_k$ under the identification $\bbC\bbP^1 \cong \bbC \cup \{\infty\}$. Since $M_2 z_k = z_k$, we have
\[
\frac{E - (b_2+1)q_2}{b_2} - \frac{1}{b_2 z_k} = z_k,
\quad k=1,2.
\]
From this, we deduce $z_1 z_2 = 1/b_2$. On the other hand, since $\tr M_1 = 0$, we observe
\[
M_1 z_1 = -\frac{1}{b_1z_1} \neq z_2,
\quad
M_1 z_2 = - \frac{1}{b_1 z_2} \neq z_1,
\]
a contradiction. Thus, when $b_1 = b_2$, \eqref{eq:furstRefined} holds and we have (i)--(iv) for every $E \in \bbR$.
\medskip

\noindent \textbf{\boldmath Case 1b: $b_1 \neq b_2$.} There are two further subcases to consider.

\noindent \textbf{\boldmath Case 1bi: $(b_1+1)q_1 = (b_2+1)q_2$.} Then, $\det g(E) \neq 0$ for every $E$. Thus, again $M_1$ and $M_2$ never share an eigenvector. At energy $E = E_0 := (b_1+1)q_1 = (b_2+1)q_2$, both $M_1$ and $M_2$ preserve $\mathcal{F} = \{\mathrm{span}(\vec{e}_1), \mathrm{span}(\vec{e}_2)\}$. Since $E_0$ is the only energy at which $\tr M_j$ vanishes for either $j$, we have (i)--(iv) for $E \in \bbR \setminus \{E_0\}$.
\medskip

\noindent \textbf{\boldmath Case 1bii: $(b_1+1)q_1 \neq (b_2+1)q_2$.} One can check that $\det g(E)$ vanishes for exactly one value of $E_1 \in \bbR$. Using the same argument as in Case~1a, we see that there is no invariant $\mathcal{F}$ of cardinality one or two away from $E = E_1$. Thus, (i)--(iv) hold away from $\mathcal{D} = \{E_1\}$.
\medskip

\noindent\textbf{\boldmath Case 2: \eqref{516nn} holds.} Then,
\[
M_j
=
\frac{1}{p_j\sqrt{b_j}}
\begin{bmatrix}
E & -1 \\
p_j^2 b_j & 0
\end{bmatrix},
\quad
\text{ and }
p_1 \sqrt{b_1} \neq p_2 \sqrt{b_2}.
\]
Notice that
\[
A:= M_1 M_2^{-1}
=
\frac{1}{p_1p_2 \sqrt{b_1 b_2}}
\begin{bmatrix}
p_2^2 b_2 & 0 \\ 0 & p_1^2 b_1
\end{bmatrix}.
\]
Since $p_1 \sqrt{b_1} \neq p_2 \sqrt{b_2}$, $A$ is hyperbolic\footnote{I.e., $|\tr(A)|>2$.} and any finite set of directions left invariant by $M_1$, $M_2$, and $A$ must be a subset of $\{\mathrm{span}(\vec{e}_1), \mathrm{span}(\vec{e}_2)\}$. It is easy to see that this cannot happen for $E \neq 0$, so we may take $\mathcal{D} = \{0\}$ in this case.
	\end{proof}
	
\begin{remark}
Let us note that the need to remove a single point is sharp. For example, in Case~1bi above, one can verify that $L(E_0)=0$. To see this, write $r = -(b_1/b_2)^{1/2}$ and $R = \mathrm{diag}(r,r^{-1})$, and observe that
\[
M_j(E_0) M_k(E_0)
=
\begin{cases}
- I & j=k \\
R^{-1} & (j,k) = (1,2) \\
R & (j,k)= (2,1).
\end{cases}
\]
Thus, by passing to blocks of length two and using the strong law of large numbers, we deduce $L(E_0) = 0$.
\end{remark}

\begin{proof}[Proof of Theorem~\ref{thm5.6}]
Now that we know that $L$ is positive and obeys a uniform LDT away from $\mathcal D$, spectral and dynamical localization for $J_{\omega}$ follows as in Theorem \ref{main1}, see also \cite{DS} where spectral localization was proved for the discrete RBM.  Let $\Omega^*$ be defined as in \eqref{340new} (where $\hatt\Omega$ is as in Theorem~\ref{thm5.5}, and $\wti\Omega$ is a full measure set realizing localization for $J_{\omega}$) and fix $\omega\in\Omega^*$.

 For all $n\in\bbZ_+$, the spectral subspace $\ran(\chi_I(J_{T^n\omega}))$ enjoys an orthonormal basis $\{f_{n,j}\}_{j=0}^{\infty}$ of eigenfunctions of $J_{T^n\omega}$ corresponding to energies $E\in I$. If we define  $\psi_{n,k,j} :=\cU_{n,k}^{-1}f_{n,j}$, then
\begin{equation}
\{\psi_{n,k,j}: n\in\bbZ_+, 1\leq k\leq m(n), j\in\bbZ_+\}
\end{equation}
is an orthonormal basis of $\ran(\chi_I(\bbJ_{\omega}))$.

{\it Proof of \eqref{1.8nn}}. For an arbitrary admissible triple $n,k,j$ we will prove \eqref{1.8nn} with $f=\psi_{n,k,j}$. First, we note that  by spectral localization for $J_{\omega}$ one has
\begin{align}
|f_{n,j}(p)|\leq C(f_{n,j})e^{-\lambda p},\ p\in\bbZ_+;\ \lambda:=\min_{E\in I}\frac{L(E)}{2}>0,
\end{align}
for some $ C(f_{n,j})>0$. Then for $|x|>n$ we get
\begin{align} \label{eq:psiTof}
|\psi_{n,k,j}(x)|
= |\cU_{n,k}^{-1}f_{n,j}(x)|
& = |f_{n,j}(|x|-n)\varphi_{n,k,|x|-n}(x)|\\
&\underset{ \eqref{57nn}}{\leq}
\frac{C(\psi_{n,k,j})e^{-\lambda(|x|-n)}}{\sqrt{w_{o}(|x|)}},
\end{align}
which implies \eqref{1.8nn}.

{\it Proof of \eqref{536}}. Due to dynamical localization for $J_{\omega}$ one has
\begin{equation}\lb{539nn}
\sum_{j\in\bbZ_+}|\langle f_{n,j}(p), f_{n,j}(q)\rangle_{\ell^2(\bbZ_+)}|\leq C_n e^{q}e^{-\theta(p-q)},
\end{equation}
for all $p\geq q$, $\theta < \min_{E \in I} L(E)$, and a constant $C_n=C(n, \omega, \theta)>0$ (cf., e.g., \cite[Proof of Theorem~6.4]{BuDaFi} where this step is discussed for the standard Anderson Hamiltonian). Next, we have
\begin{align}
&\sup\limits_{t>0}|\langle\delta_x, \chi_I(\bbJ_{\omega})e^{-it\bbJ_{\omega}} \delta_y\rangle_{\ell^2(\cV)}|
\leq \sum_{\substack{n\in\bbZ_+\\1\leq k\leq m(n)}}\ \  \sum_{j=0}^{\infty} |\psi_{n,k,j}(x)\psi_{n,k,j}(y)|\no\\
&\underset{\eqref{56new}}{\leq}
\sum_{\substack{0\leq n\leq |y| \\1\leq k\leq m(n)}}\ \   \sum_{j=0}^{\infty} |\psi_{n,k,j}(x)\psi_{n,k,j}(y)|\no\\
&\underset{\eqref{eq:psiTof}}{=}
\sum_{\substack{0\leq n\leq |y| \\1\leq k\leq m(n)}}\ \   \sum_{j=0}^{\infty}  |f_{n,j}\big(|x|-n\big)\varphi_{n,k,|x|-n}(x)\, f_{n,j}\big(|y|-n\big)\varphi_{n,k,|y|-n}(y)|\no\\
&\underset{\eqref{57nn}}{\leq} \sum_{\substack{0\leq n\leq |y| \\1\leq k\leq m(n)}}\ \  \sum_{j=0}^{\infty}  \frac{|f_{n,j}\big(|x| - n\big)f_{n,j}\big(|y| - n\big)|}{\sqrt{w_y(|x|-|y|-1)}}\no\\
%%%%%
&\underset{\eqref{539nn}}{\leq} \sum_{\substack{0\leq n\leq \gen(y)\\1\leq k\leq m(n)}}  \frac{C_n e^{ |y|}e^{-\theta(|x|-|y|)}}{\sqrt{w_y(|x|-|y|-1)}}
\leq \frac{C_y e^{-\theta(\dist(x,y))}}{\sqrt{w_y(|x|-|y|)}}.
\end{align}
Finally, \eqref{537} follows from \eqref{536} by summation in $x$.
\end{proof}

\section*{Acknowledgments} We thank G.\ Berkolaiko, M.\ Lukic, and G.\ Stolz for helpful discussions, and P. Hislop for bringing our attention to this subject and for motivating discussions.
%%%%%%%%%%%%%%%%%%%%%%%%%%%%%%%%
%%%%%%%%%%%%%%%%%%%%%%%%%%%%%%%%


\begin{thebibliography}{99}
%%%%%%%%%%%%%%%%%%%%%%%%%%%%%%%%
%%%%%%%%%%%%%%%%%%%%%%%%%%%%%%%%
		%
		%\bi{Krein1}  M.\ Aizenman,\  H.\ Schanz,  U.\ Smilansky, S.\ Warzel, {\it  Edge switching transformations of quantum graphs}, Acta Physica Polonica A. {\bf 132} (2017), 1699--1703.
		%
		\bi{ASW1} M.\ Aizenman,\ R.\ Sims,\ S.\ Warzel,  {\it  Absolutely continuous spectra of quantum tree graphs with weak disorder}, Commun. Math. Phys. {\bf 264} (2006), 371--389.
		%
		\bi{ASW2} M.\ Aizenman,\ R.\ Sims,\ S.\ Warzel,  {\it  Stability of the absolutely continuous spectrum of random Schr\"odinger operators on tree graphs},  Probab. Theory Related Fields {\bf 136} (2006), 363--394.
		%
		\bi{ASW3} M.\ Aizenman,\ R.\ Sims,\ S.\ Warzel,  {\it  Fluctuation based proof of the stability of ac spectra of random operators on tree graphs},  Recent advances in differential equations and mathematical physics, 1--14, Contemp. Math., 412, Amer. Math. Soc., Providence, RI, 2006.
		%
		\bi{AW11} M.\ Aizenman,\ S.\ Warzel,   {\it Absence of mobility edge for the Anderson random potential on tree graphs at weak disorder,} EPL (Europhysics Letters), {\bf 96} (2011) 37004.
		%
		\bi{AW13} M.\ Aizenman, S.\ Warzel, {\it Resonant delocalization for random Schr\"odinger operators on tree graphs}, J.\ Eur.\ Math.\ Soc.\ \textbf{15} (2013),1167--1222.
		%
		\bi{AWBook} M.\ Aizenman,\ S.\ Warzel,   {\it Random Operators: Disorder Effects on Quantum Spectra and Dynamics,} Graduate Studies in Mathematics Vol. 168, American Mathematical Society, 2015
		%
		\bi{AGHH} S.\ Albeverio, F.\ Gesztesy, R.\ Hoegh-Krohn,\  H.\ Holden,  with app. by P.\ Exner, {\em  Solvable Models in Quantum Mechanics},  2nd edition, AMS-Chelsea Series, Amer. Math. Soc.,  Providence, RI, 2005.
		%
		%\bi{Krein2} S.\ Albeverio, K.\ Pankrashkin, {\em A remark on Krein's resolvent formula and boundary conditions}, J. Phys. A {\bf 38} (2005) 4859--4864.
		%
	%	\bi{BW05} L.\ Bartholdi, W.\ Woess, {\it Spectral computations on lamplighter groups and Diestel-Leader graphs}, J. Fourier Anal. Appl. {\bf 11} (2005), 175--202.
		%
		\bi{BirSol} B.\ Sh.\ Birman, M.\ Z.\  Solomyak,  {\it Spectral Theory of Self-Adjoint Operators in Hilbert Space}, D. Reidel Publishing Co., Dordrecht, 1987
		%
		\bi{BK} G.\  Berkolaiko, P.\ Kuchment,  {\it Introduction to Quantum Graphs}, Mathematical Surveys and Monographs, vol. 186, AMS, Providence, 2012.
		%
		\bibitem{BLS} G.\ Berkolaiko, Y.\ Latushkin, S.\ Sukhtaiev,   {\it Limits of quantum graph operators with shrinking edges,}
		Adv. Math. {\bf 352} (2019), 632--669.
		%
		\bibitem{bougerollacroix} P.\ Bougerol, J.\ Lacroix, \textit{Products of Random Matrices with Applications to Schr\"odinger Operators}, \textit{Birkh\"auser}, 1985.
		%
		\bi{BoSc1} J.\ Bourgain,\ W.\ Schlag,  {\it  Anderson localization for Schr\"odinger operators on $\bbZ$ with strongly mixing potentials}, Commun. Math. Phys. {\bf 215} (2000), 143--175.
		%
		\bi{Br07} J.\ Breuer, {\it Singular continuous spectrum for the Laplacian on certain sparse trees}, Commun. Math. Phys. {\bf 219} (2007), 851--857.
		%
		\bi{Br07b} J.\ Breuer, {\it Localization for the Anderson model on trees with finite dimensions}, Ann. Henri Poincar\'e {\bf 8} (2007), 1507--1520.
		%
		\bi{BDE} J.\ Breuer, S.\ Denisov, L.\ Eliaz, {\it On the essential spectrum of Schr\"odinger operators on trees}, Math. Phys. Anal. Geom. 21 (2018), no. 4, Art. 33, 25 pp. 
		%
		\bi{BF09} J.\ Breuer,\  R.\ Frank, {\it Singular spectrum for radial trees},  Rev. Math. Phys. {\bf 21} (2009) 929--945.
		%
		\bi{JBK} J.\ Breuer,\  M.\ Keller, {\it Spectral analysis of certain spherically homogeneous graphs}, Oper. Matrices {\bf 7} (2013) 825--847.
		%
		\bi{BL} J.\ Breuer,\  N.\ Levi, {\it On the decomposition of the Laplacian on metric graphs}, preprint arXiv:1901.00349v1
		%
		\bi{BuDaFi} V.\ Bucaj,\ D.\ Damanik,\ J.\ Fillman,\ V.\ Gerbuz,\ T.\ VandenBoom,\ F.\ Wang,\ Z.\ Zhang,  {\it Localization for the one-dimensional Anderson model via positivity and large deviations for the Lyapunov exponent}, Trans.\ Amer.\ Math.\ Soc.\ \textbf{372} (2019), no.\ 5, 3619--3667.
		%
		\bi{BuDaFi2} V.\ Bucaj,\ D.\ Damanik,\ J.\ Fillman,\ V.\ Gerbuz,\ T.\ VandenBoom,\ F.\ Wang,\ Z.\ Zhang,  {\it Positive Lyapunov exponents and a large deviation theorem for continuum Anderson models, briefly}, J.\ Funct.\ Anal.\ \textbf{277} (2019), no.\ 9, 3179--3186..
		%
		\bi{Bu} V.\ I.\  Burenkov, {\em Sobolev Spaces on Domains}, B.G. Teubner, Stuttgart--Leipzig, 1998.
		%
		\bi{Car} R.\ Carlson, { \it Nonclassical Sturm--Liouville problems and Schr\"odinger operators on radial trees},
		Electron. J. Diff. Eq. {\bf 71} (2000), 1--24.
		%
		\bi{CS} W.\ Craig, B.\ Simon, {\it Subharmonicity of the Lyaponov index}, Duke Math. J. {\bf 50} (1983), 551--560.
		%
		\bi{DLS2006} D.\ Damanik, D.\ Lenz, G.\ Stolz, {\it Lower transport bounds for one-dimensional continuum Schr\"odinger operators}, Math. Ann. \textbf{336} (2006), 361-389.
		%
		\bi{DSS2} D.\ Damanik, R.\ Sims, G.\ Stolz, {\it 	Lyapunov exponents in continuum Bernoulli-Anderson models}, { Operator methods in ordinary and partial differential equations} (Stockholm, 2000), 121--130,
		Oper. Theory Adv. Appl., 132, Birkhauser, Basel, 2002.
		%
		\bi{DSS} D.\ Damanik, R.\ Sims, G.\ Stolz, {\it Localization for one-dimensional continuum Bernoulli--Anderson models}, Duke Math. J. \textbf{114} (2002), 59--100.
		%
		\bi{DS} D.\ Damanik,\ S.\ Sukhtaiev, {\it Anderson localization for radial tree graphs with random branching numbers}, J. Func. Anal. \textbf{277} (2019), 418--433.
		%
	%	\bi{D} E.\ B.\ Davies, {\it Spectral Theory and Differential Operators}, Cambridge Studies in Advanced Mathematics, 42. Cambridge University Press, Cambridge, 1995.
		%
		\bi{EFK08} T.\ Ekholm, R.\ Frank, H.\ Kovarik, {\it Remarks about Hardy inequalities on metric trees}, Analysis on graphs and its applications, P. Exner, et al. (eds.), 369--379, Proc. Sympos. Pure Math. 77, Amer. Math. Soc., Providence, RI, 2008.
		%
		\bi{EFK11} T.\  Ekholm,\ R.\ Frank,\ H.\  Kovarik, {\it Eigenvalue estimates for Schr\"odinger operators on metric trees}, Adv. Math. \textbf{226} (2011), 5165--5197.
		%
		\bi{EFK13} R.\ Frank, H.\ Kovarik, {\it Heat kernels of metric trees and applications}, SIAM J. Math. Anal. {\bf 45} (2013), 1027--1046.
		%
		\bi{EH} W.\ D.\ Evans, D.\ J.\ Harris, {\it Fractals, trees and the Neumann Laplacian}, Math. Ann. {\bf 296} (1993), 493--527.
		%
		\bi{EHP} W.\ D.\ Evans, D.\ J.\ Harris, L. Pick, {\it Weighted Hardy and Poincar\'e inequalities on trees}, J. London Math. Soc. {\bf 52} (1995), 121--136.
		\bi{FHS1} R.\ Froese, D.\ Hasler, W.\ Spitzer, {\it Transfer matrices, hyperbolic geometry and absolutely continuous spectrum for some discrete Schr\"odinger operators on graphs}, J. Funct. Anal. {\bf 230} (2006), 184--221.
		%
		\bi{FHS} R.\ Froese, D.\ Hasler, W.\ Spitzer, {\it Absolutely continuous spectrum for the Anderson model on a tree: a geometric proof of Klein's theorem}, Commun. Math. Phys. {\bf 269} (2007), 239--257.
		%
		\bi{FLSSS} R.\ Froese, D.\ Lee, C.\ Sadel, W.\ Spitzer, G.\ Stolz, {\it Localization for transversally periodic random potentials on binary trees}, J. Spectr. Theory {\bf 6} (2016), 557--600.
		%
	%	\bi{FS83} J.\ Fr\"ohlich, T.\ Spencer, {\it Absence of diffusion in the Anderson tight binding model for large disorder or low energy}, Commun. Math. Phys. {\bf 8} (1983), 151--184.
		%
		\bi{F63} H.\ F\"urstenberg, {\it Noncommuting random products}, Trans. Amer. Math. Soc. {\bf 108} (1963), 377--428.
		%
		\bi{FuKi} H.\ F\"urstenberg, Y.\ Kifer, {\it Random matrix products and measures on projective spaces}, Israel J. Math. {\bf 46} (1983), 12--32.
		%
		\bi{GdB} F.\ Germinet, S.\ De Bi\'evre, {\it Dynamical localization for discrete and continuous	random Schr\"odinger operators}, Commun. Math. Phys. {\bf 194} (1998), 323--341.
		%
		\bibitem{GS01} M.\ Goldstein, W.\ Schlag, {\it H\"older continuity of the integrated density of states for quasi-periodic Schr\"odinger equations and averages of shifts of subharmonic functions}, Ann. of Math. (2) {\bf 154} (2001), 155--203.
		%
		\bibitem{GK17} A. Gorodetski, V.\ Kleptsyn, Parametric F\"urstenberg theorem on random products of $\mathrm{SL}(2,\R)$ matrices, preprint (arXiv:1809.00416).
		%
		\bi{Gr} G.\ Grubb, {\em  Distributions and Operators},  Springer-Verlag New York, 2009.
		%
		\bi{GZ01} R.\ Grigorchuk, A.\ Zuk, {\it The lamplighter group as a group generated by a 2-state automaton, and its spectrum}, Geom. Dedicata {\bf 87} (2001), 209--244.
		%
		\bi{HM}  E.\ M.\ Harrell, A.\ V.\ Maltsev, {\it On Agmon metrics and exponential localization for quantum graphs}, Commun. Math. Phys. {\bf 359} (2018), 429--448.
		%
		\bi{HiPo} P.\ Hislop, O.\ Post, {\it Anderson localization for radial tree-like quantum graphs}, Waves Random Complex Media {\bf 19} (2009), 216--261.
		%
		\bi{JZ} S.\ Jitomirskaya, X.\ Zhu, {\it Large deviations of the Lyapunov exponent and localization for the 1D Anderson model}, Comm. Math. Phys. {\bf 370} (2019), 311--324.
		%
		
	%	\bi{JKS} S.\ Jitomirskaya, D.\ A.\ Koslover, M.\ S.\ Schulteis, {\it Localization for a family of %one-dimensional quasiperiodic operators of magnetic origin}, Ann. Henri Poincar\'e {\bf 6} (2005), %103--124.
		%
		%\bi{I73} K.\ Ishii,   {\it Localization of eigenstates and transport phenomena in one-dimensional disordered systems}, Progr. Theoret. Phys. Suppl. {\bf 53} (1973), 77--118.
		%
	%	\bi{IM} K.\ Ishii, H.\ Matsuda,   {\it Localization of normal modes and energy transport in the
	%		disordered harmonic chain}, Progr. Theoret. Phys.  {\bf 45} (1970), 56--86.
		%
		\bi{K80} T.\ Kato, {\it Perturbation Theory for Linear Operators}, Springer, Berlin, 1980.
		%
		\bi{Ki08} W.\ Kirsch, {\it An invitation to random Schr\"odinger operators}, Panor. Synth\`eses, 25, Random Schr\"odinger operators, 1--119, Soc. Math. France, Paris, 2008.
		%
		\bi{KirschMartinelli1982} W.\ Kirsch, F.\ Martinelli,  {\it On the spectrum of Schr\"odinger operators with a random potential}, Commun. Math. Phys. \textbf{85} (1982), 329--350.
		%
		\bi{K1} A.\ Klein,   {\it Absolutely continuous spectrum in the Anderson model on the Bethe lattice}, Math. Res. Lett. {\bf 1} (1994), 399--407.
		%	
		\bi{K2} A.\ Klein,   {\it Spreading of wave packets in the Anderson model on the Bethe lattice}, Commun. Math. Phys. {\bf 177} (1996), 755--773.
		%
		\bi{K3} A.\ Klein,   {\it Extended states in the Anderson model on the Bethe lattice}, Adv. Math. {\bf 133} (1998), 163--184.
		%
		\bi{NS} K.\ Naimark, M.\ Solomyak,   {\it Geometry of Sobolev spaces on regular trees and the Hardy inequalities}, Russian Journal of Physics {\bf 8} (2001), 322--335.
		%
		\bi{NS2} K.\ Naimark, M.\ Solomyak, {\it Eigenvalue estimates for the weighted Laplacian on metric trees}, Proc. London Math. Soc. {\bf 80} (2000), 690--724.
		%
		\bibitem{Osc} V.\ I.\ Oseledec, A multiplicative ergodic theorem. Characteristic Ljapunov, exponents of dynamical systems. (Russian)
\textit{Trudy Moskov. Mat. Ob\u{s}\u{c}}. \textbf{19} 1968 179–210.
		%
		\bibitem{PF} L.\ Pastur, A. Figotin, \textit{Spectra of Random and Almost-Periodic Operators}, Springer-Verlag Berlin Heidelberg, 1992.
		%
		\bibitem{Ruelle1979} D.\ Ruelle, Ergodic theory of differentiable dynamical systems, \textit{Inst. Hautes Études Sci. Publ. Math.} \textbf{50} (1979), 27--58.

		%
		\bi{SST} M.\ Schmied, R.\ Sims, G.\ Teschl,   {\it On the absolutely continuous spectrum of Sturm--Liouville operators with applications to radial quantum trees}, Oper. Matrices {\bf 2} (2008), 417--434.
		%
		%\bibitem{S2} B.\ Simon, \textit{Orthogonal Polynomials on the Unit Circle. Part~2. Spectral Theory}, %Colloquium Publications, 54, American Mathematical Society, Providence (2005).
		%
		\bi{SS} A.\ Sobolev, M.\ Solomyak,  {\it Schr\"odinger operators on homogeneous metric trees: spectrum in gaps}, Rev. Math. Phys. {\bf 14} (2002), 421--467.
		%
		\bi{S} M.\ Solomyak, {\it On the spectrum of the Laplacian on regular metric trees}, Waves Random Media {\bf 14} (2004), S155--S171, Special section on quantum graphs.
		%
		\bi{St01} P.\ Stollmann  {\em Caught by Disorder, Bound States in Random Media}, Progress in Mathematical Physics 20, Birkh\"auser, Boston, 2001.
		%
		\bi{Stol}  P.\ Stollmann,   {\it Scattering by obstacles of finite capacity}, J. Func. Anal. {\bf 121} (1994),416--425.
		
	\end{thebibliography}
\end{document}